\documentclass[12pt]{article}

\usepackage{wrapfig}
\usepackage{amssymb}
\usepackage{bm}
\usepackage{amsmath,amsthm}
\usepackage{a4wide}
\usepackage{parskip}
\usepackage{tikz}
\usetikzlibrary{3d}
\usepackage{mathtools}
\usepackage{cjhebrew}
\newcommand{\mem}{\textcjheb{m}}
\newcommand{\resh}{\textcjheb{r}}

\usetikzlibrary{arrows,arrows.meta,decorations.markings,matrix}
\usetikzlibrary{patterns}
\usetikzlibrary{math}
\usetikzlibrary{calc}
\usetikzlibrary{shapes.multipart}
\usetikzlibrary{decorations.pathmorphing,shapes}
\usetikzlibrary{decorations.pathreplacing}

\tikzset{->-/.style={decoration={
              markings,
              mark=at position .5 with {\arrow{>}}},postaction={decorate}}}

\tikzset{-<-/.style={decoration={
              markings,
              mark=at position .5 with {\arrow{<}}},postaction={decorate}}}

\usepackage{comment}

\usepackage{accents}
\newcommand{\dbtilde}[1]{\accentset{\approx}{#1}}

\theoremstyle{plain}
\newtheorem{theorem}{Theorem}[subsection]

\newtheorem{lemma}[theorem]{Lemma}
\newtheorem{proposition}[theorem]{Proposition}
\newtheorem{corollary}[theorem]{Corollary}

\theoremstyle{definition}
\newtheorem{definition}[theorem]{Definition}
\newtheorem{construction}[theorem]{Construction}
\newtheorem{setup}[theorem]{Setup}
\newtheorem{example}[theorem]{Example}
\newtheorem{discussion}[theorem]{Discussion}

\newtheorem{remark}[theorem]{Remark}

\newtheorem{background}[theorem]{Background}

\numberwithin{equation}{section}

\usepackage[a4paper, margin=1in]{geometry}
\makeatletter
\def\thm@space@setup{%
  \thm@preskip=\parskip \thm@postskip=0pt
}
\makeatother

\usepackage[scr=boondoxo]{mathalfa}

\newcommand{\Polytope}{P}
\newcommand{\Rb}{R_{\partial}}
\newcommand{\Sb}{S}
\newcommand{\blr}{\bm{\lambda}} 
\newcommand{\bmm}{\bm{\mu}} 
\newcommand{\lr}{\lambda} 
\newcommand{\mm}{\mu} 
\newcommand{\mg}{\mathscr{g}}
\newcommand{\mf}{\mathscr{f}}
\newcommand{\mproj}{\mathscr{p}}
\newcommand{\ironed}{\operatorname{ironed}}
\newcommand{\lcm}{\operatorname{lcm}}
\newcommand{\toric}{\operatorname{toric}}
\newcommand{\sX}{\mathfrak{X}}
\newcommand{\cX}{\mathcal{X}}
\newcommand{\Hess}{\operatorname{Hess}}
\newcommand{\tri}{\Lambda}
\newcommand{\trifil}{\Delta}

\newcommand{\mI}{\mathcal{I}}
\renewcommand{\epsilon}{\varepsilon}
\newcommand{\ups}{\upsilon}

\newcommand{\CC}{\mathbb{C}}
\newcommand{\mC}{\mathcal{C}}

\newcommand{\Ell}{\mathbb{E}}

\newcommand{\QQ}{\mathbb{Q}}

\newcommand{\mL}{\mathcal{L}}

\newcommand{\RR}{\mathbb{R}}

\newcommand{\PP}{\mathbb{P}}

\newcommand{\ZZ}{\mathbb{Z}}
\newcommand{\CP}{\mathbb{CP}}

\newcommand{\OP}[1]{\mathrm{#1}}
\newcommand{\til}{\widetilde}

\newcommand{\mX}{\mathcal{X}}
\newcommand{\mY}{\mathcal{Y}}
\newcommand{\mZ}{\mathcal{Z}}
\newcommand{\mE}{\mathcal{E}}

\newcommand{\edge}{\mathfrak{e}}
\newcommand{\pt}{\mathfrak{p}}
\newcommand{\qt}{\mathfrak{q}}

\newcommand{\exc}{\operatorname{exc}}

\title{Weighted Seshadri constants \\and ellipsoid embeddings}
\author{Jonny Evans}

\begin{document}
\maketitle

\begin{abstract}
  We explore Seshadri constants associated to weighted blow-ups
  of complex projective varieties and demonstrate how to use this
  notion to construct symplectic embeddings of ellipsoids. We
  illustrate the utility of this point of view by providing
  constructions of full fillings of \(\CP^2\) by ellipsoids
  corresponding to all of the exceptional (post-Fibonacci) steps
  of the McDuff--Schlenk staircase and some non-obvious
  embeddings of ellipsoids in ellipsoids.
\end{abstract}

\section{Introduction}

\subsection{Seshadri constants and symplectic geometry}
Recall that:
\begin{itemize}
\item the {\em Gromov width} of a symplectic manifold
  \((Z,\omega)\) is the quantity \(\pi r^2\) where \(r\) is the
  supremum over all radii of symplectically embedded balls in
  \((Z,\omega)\);
\item the {\em Seshadri constant} of a projective variety \(Z\)
  at a point \(p\) with respect to an ample divisor \(D\) is
  defined to be the supremum
  \(\sup\{\varepsilon\,:\, g^*D-\varepsilon C\text{ is
    ample}\}\), where \(g\colon Y\to Z\) is the blow-up of \(Z\)
  at \(p\) and \(C=\exc(g)\) is the exceptional divisor.
\end{itemize}
It has been known since the work of McDuff and Polterovich
\cite{McDuffPolterovich} that there is a close connection
between the Gromov width of a K\"{a}hler manifold and its
Seshadri constants: the Seshadri constant gives a lower bound on
the Gromov width (see Lazarsfeld's book {\cite[Theorem
  5.1.22]{Lazarsfeld}}). One can generalise this to the problem
of embedding several symplectic balls pairwise disjointly, which
relates to the {\em multi-point} Seshadri constant. Biran
\cite{Biran4d,BiranStability} proved that one can fully fill any
symplectic 4-manifold with balls of equal size provided one has
enough balls. In his ECM survey \cite{BiranECM}, Biran pointed
out that one should be able to use {\em weighted blow-ups}
instead of ordinary blow-ups and thereby study symplectic embeddings of
{\em ellipsoids}.

\begin{definition}
  Given a vector \(\bm{\sigma}\) of positive real numbers, the symplectic ellipsoid
  \(\Ell(\bm{\sigma})\) is \begin{equation}\label{eq:ellipsoid}
    \Ell(\bm{\sigma}) = \left\{\bm{z}\in\CC^n\,:\,
      \sum_{j=1}^n\frac{|z_j|^2}{\sigma_j}\leq
      1\right\}\subset\CC^n
  \end{equation} where \(\CC^n\) is equipped with the standard
  symplectic form \(\sum_{k=1}^ndx_k\wedge dy_k\) and
  \(z_k=x_k+iy_k\). In the case when \(n=2\), we will refer to
  \(\sigma_1/\sigma_2\) as the {\em slope} of the ellipsoid. If
  \(\Ell(\bm{\sigma})\) embeds symplectically into a symplectic
  4-manifold \((Z,\omega)\) then we refer to the ratio
  \(\OP{vol}(\Ell(\bm{\sigma}))/\OP{vol}(Z)\) as the {\em packing
    ratio}.
\end{definition}

The connection between weighted blow-up and ellipsoid embeddings was
explored by McDuff \cite{McDuffEllipsoids}; McDuff showed that the
problem of embedding a symplectic ellipsoid is equivalent to an
associated ball packing problem. Similarly to Biran's results, it
follows that any symplectic 4-manifold admits a full filling by any
symplectic ellipsoid of sufficiently large slope. For example,
\(\CP^2\) can be fully filled by any ellipsoid of slope at least
\(\frac{289}{36}\) (see {\cite[Corollary 1.2.4]{McDuffSchlenk}}). The
complete answer of which ellipsoids embed symplectically into
\(\CP^2\) was worked out by McDuff and Schlenk in their seminal paper
\cite{McDuffSchlenk}. The size of the biggest ellipsoid embedding into
\(\CP^2\) depends strongly on the slope of the ellipsoid: the set of
possible pairs \((a,b)\in\RR^2\) for which there exists an embedding
of \(\Ell(a,b)\) forms a staircase-like shape. This staircase has
infinitely many piecewise-linear steps; there is an infinite sequence
of steps whose coordinates are related to the Fibonacci numbers, and
which accumulate when the slope approaches the fourth power of the
Golden Ratio. There are also nine ``exceptional steps'' beyond this
slope. For slopes outside these steps, \(\CP^2\) can be fully filled
(i.e. the packing ratio can be made arbitrarily close to \(1\)).

Whilst much of the subsequent work on ellipsoid embeddings has used
McDuff's equivalence with ball-packings as a basis, the idea of
working directly with weighted blow-ups has been used to great effect
by Entov and Verbitsky \cite{EntovVerbitsky} to show that full
ellipsoid packings exist for {\em Campana simple manifolds}
(K\"{a}hler manifolds which are not unions of their proper
subvarieties). In the current paper, we adopt this point of view and
show that it can also be used to find all of the ellipsoid embeddings
in \(\CP^2\) corresponding to the post-Fibonacci steps in the
McDuff--Schlenk staircase.

\subsection{Inflation}
Biran's main tool for constructing multiple balls and McDuff's
main tool for constructing ellipsoids is {\em symplectic
  inflation}, where one modifies a symplectic form along some
curve in a 4-manifold; see for example {\cite[Proof of
  Proposition 2.1, Step 2]{McDuffEllipsoids}}. Inflation along
unicuspidal and sesquicuspidal plane curves is used by McDuff
and Siegel in {\cite[Theorem A, E]{McDuffSiegelInfiniteStairs}}
to construct the full ellipsoid embeddings in \(\CP^2\)
corresponding to the Fibonacci portion of the
staircase. Opshtein \cite{OpshteinEllipsoid} showed that
ellipsoids with slope \(p/q\) could be constructed by inflating
along a curve with ``multi-cusp'' singularities (points where
several irreducible branches each modelled analytically on
\(y^p=x^q\)). In each case, one needs to pay careful attention
to the local model for the inflationary curve.

Already in his ECM survey {\cite[Section 2.1]{BiranECM}}, Biran
pointed out that inflation is a symplectic analogue of using the
Nakai--Moishezon criterion to find an ample divisor. In the current
paper, we bypass inflation and just use Nakai--Moishezon, giving a
purely algebro-geometric construction of symplectic ellipsoid
embeddings.

\subsection{Construction of ellipsoids}
To a weighted blow-up \(g\colon Y\to Z\) (see Definition
\ref{dfn:weighted_blowup}) and an ample divisor \(D\) on \(Z\),
we will associate a {\em weighted Seshadri constant}
\(\varepsilon(Z,D;g)\) (see Definition
\ref{dfn:seshadri_constant}: this is the reciprocal of the
Cutkosky--Ein--Lazarsfeld \(s\)-invariant {\cite[Definition
  1.1]{CutkoskyEinLazarsfeld}} of a certain ideal). One can use
a lower bound on the weighted Seshadri constant to produce
ellipsoids of a certain size. This is made precise in Theorem
\ref{thm:ellipsoids_galore} below; essentially the same result
appears in the paper of Entov and Verbitsky
\cite{EntovVerbitsky} with a complete and detailed proof in the
masters thesis of Gudiev \cite{Gudiev}. It was also proved in a
different way by Luef and Wang {\cite[Theorem A]{LuefWang}}, who
also give an equivalent definition of weighted Seshadri
constants in terms of plurisubharmonic functions with specified
local behaviour {\cite[Definition 1.5 and Theorem
  3.2]{LuefWang}}. We include a slightly different\footnote{The
  idea in all the proofs is to produce K\"{a}hler forms which
  interpolate between standard forms on \(\CC^n\) and its
  weighted blow-up. We use Guillemin--Abreu theory to reduce
  this to interpolation between convex functions on the moment
  polytope; both Entov--Verbitsky/Gudiev and Luef--Wang use
  Demailly's regularised maximum of plurisubharmonic functions.}
proof here for completeness.

\begin{theorem}\label{thm:ellipsoids_galore}
  Let \(Z\) be a smooth complex projective variety, \(p\in Z\) be a
  point and \(D\subset Z\) be an irreducible ample \(\QQ\)-divisor;
  let \(\zeta\) be the associated symplectic form Poincar\'{e}-dual to
  \(\pi D\) (see Remark \ref{rmk:ample}). Let \(g\colon Y\to Z\) be a
  weighted blow-up with weight vector
  \(\bm{a}=(a_1,\ldots,a_n)\). Then \((Z,\zeta)\) admits a symplectic
  embedding of the ellipsoid
  \(\Ell\left(\frac{\varepsilon}{a_1},\ldots,\frac{\varepsilon}{a_n}\right)\)
  for any rational \(\varepsilon<\varepsilon(Z,D;g)\).
\end{theorem}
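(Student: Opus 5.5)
We plan to argue as in the classical McDuff--Polterovich proof that Seshadri constants bound the Gromov width. The weighted blow-up replaces the round blow-up, and the interpolation is carried out through Guillemin--Abreu theory on toric local models. Fix a rational \(\varepsilon<\varepsilon(Z,D;g)\) and let \(C=\exc(g)\). By the definition of the weighted Seshadri constant, the \(\QQ\)-divisor \(g^*D-\varepsilon C\) is ample on \(Y\); here \(Y\) is an orbifold and \(C\cong\PP(a_1,\ldots,a_n)\) is a weighted projective space. Choose \(\delta>0\) small with \(\varepsilon+\delta\) still below the Seshadri constant. Clearing denominators, some multiple embeds \(Y\) into projective space, and pulling back the Fubini--Study form gives a K\"{a}hler orbifold form \(\eta\) on \(Y\) in the class Poincar\'{e}-dual to \(\pi(g^*D-(\varepsilon+\delta) C)\).

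The key local picture comes from the weighted blow-up of \(\CC^n\) at the origin with weights \(\bm{a}\). This is a toric orbifold whose moment image is \(\{\bm{x}\in\RR^n_{\geq 0}\,:\,\sum a_i x_i\geq \varepsilon\}\), and whose exceptional facet has symplectic size \(\varepsilon\). Removing the exceptional divisor and collapsing it, in the sense of symplectic cutting, recovers \(\RR^n_{\geq 0}\). The region \(\{\sum a_ix_i\leq\varepsilon\}\) then corresponds exactly to \(\Ell(\varepsilon/a_1,\ldots,\varepsilon/a_n)\), using the normalisation \(\pi|z_i|^2\) as moment map, which matches the factor \(\pi\) in \(\zeta\).

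The steps run as follows.

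\textbf{(1) Local toric form of \(\eta\).} Near \(C\), choose holomorphic coordinates \(z\) at \(p\) in which \(g\) is the standard weighted blow-up. By a standard cohomological argument, on the punctured neighbourhood \(\eta=\zeta_0+dd^c\phi\), where \(\zeta_0\) is the restriction of \(\zeta\) (or a reference form). Write \(\eta=dd^c\psi_Y\) locally, with \(\psi_Y\) the potential of the toric model form of size \(\varepsilon+\delta\) near \(C\) plus a bounded correction. Replacing \(\eta\) by a form cohomologous to it, we arrange that near \(C\) it is \(T^n\)-invariant and equals the standard toric weighted-blow-up form. This can be done by averaging, or by patching with a regularised maximum of potentials.

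\textbf{(2) Interpolation.} On an annular shell around \(p\), interpolate between this toric form and a flat form on a neighbourhood of the would-be ellipsoid. In Guillemin--Abreu terms, this means interpolating between two convex symplectic potentials on the moment polytope: the Guillemin potential of the blow-up polytope, and that of the orthant. The goal is to produce a K\"{a}hler form on \(Y\) which near \(C\) is exactly the toric weighted-blow-up model of size \(\varepsilon\), and which away from a small neighbourhood agrees with \(\eta\) plus \(\delta\)-correction terms, so that the total class is \(\pi(g^*D-\varepsilon C)\).

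\textbf{(3) Blow-down.} Perform the symplectic blow-down, that is, the reverse cut, replacing the toric neighbourhood of \(C\) by the ellipsoid \(\Ell(\varepsilon/a_1,\ldots,\varepsilon/a_n)\). This yields a symplectic form on \(Z\) in the class of \(\pi D\) containing the ellipsoid. The new form and \(\zeta\) are linked by a path of cohomologous symplectic forms, for example by linear interpolation of K\"{a}hler forms, since both are K\"{a}hler for the same complex structure away from \(p\). Moser's theorem then transports the ellipsoid into \((Z,\zeta)\).

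The main obstacle will be step (2), which requires an interpolation that stays K\"{a}hler. Convex combinations of symplectic potentials are convex, but the two potentials live on different polytopes. The first thing I would try is to use the \(\delta\) slack: glue the Legendre-dual Kähler potentials via a convex cut-off of the form \(\max\)-regularised in the variable \(\sum a_i x_i\), exploiting that \(\varepsilon+\delta>\varepsilon\) leaves a shell where both are defined and one dominates the other near each end.
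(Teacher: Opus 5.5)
There is a genuine gap. Your outline (K\"ahler form from ampleness, standardise near $C$, blow down, Moser) matches the paper's. However, the two positivity-preserving constructions that make up the actual proof are either left open or described by mechanisms that fail as stated.

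In step (1), $T^n$-averaging gives an invariant form, not the model. Cutting it back to the original form at the edge of the chart then recreates the same interpolation problem. A regularised maximum of $\psi_Y$ with the toric potential \emph{of the same size} $\varepsilon+\delta$ (which your word ``cohomologous'' requires) does not work either. The two potentials have the same log pole along $C$, so their difference is a bounded smooth function of no particular sign. No constant then makes the model dominate near $C$ while being dominated near $\partial\jmath(\mY(R))$.

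The paper never makes the form exactly toric in the same class. Lemma \ref{lma:ironing} works with \emph{taming} forms and produces $\gamma+(\kappa^2-\epsilon^2)\mg^*\omega_\mZ$ near $C$, with $\gamma$ the toric model. The large multiple of the degenerate form $\mg^*\omega_\mZ$ is inserted through the monotone-embedding form $\tau_{\epsilon,\kappa}$ of Lemma \ref{lma:monotone_function}, so that nothing changes near $\partial\mY(R)$. It is what is meant to absorb the cut-off error involving $d(\mg^*\rho_{\epsilon,\kappa})\wedge\beta$.

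Your $\delta$-slack can be a genuine alternative here, in the spirit of Entov--Verbitsky and Luef--Wang. For that you must apply it directly to $\psi_Y$ against the \emph{size-$\varepsilon$} toric potential $F+c$. The difference $F+c-\psi_Y$ tends to $+\infty$ along $C$, so $F+c$ wins near $C$ for every $c$, while $\psi_Y$ wins near the outer boundary for $c\ll 0$. The class then becomes $\pi(g^*D-\varepsilon C)$ automatically, with no ``correction terms''. That is not what step (1) says, and step (2) conflates it with the interpolation towards the orthant potential, which produces a form on $Z$, not on $Y$.

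Step (3) has a separate gap. A symplectic reverse cut gives a symplectic manifold diffeomorphic to $Z$, but the ellipsoid is glued in through action-angle coordinates. Nothing then makes the new form tame the complex structure of $Z$ on the inserted region. Linear interpolation with $\zeta$, and hence Moser as in Corollary \ref{cor:ellipsoids_2}, needs tameness at every point, not just away from $p$.

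Example \ref{exm:ironed_Z} and Lemma \ref{lma:weighted_blowdown} supply exactly this, and they also resolve your ``different polytopes'' obstacle:
\begin{itemize}
\item The singular term $L(\ell_0(\bmm)-R_0/2)$ is replaced by the convex function $\zeta_{\delta,\eta}$, which is $L$ extended by its tangent line at $\delta$ and then smoothed. This gives a legitimate symplectic potential on the whole orthant.
\item The normalisation $\bm b=-\frac12\bm a\log(\Rb-R_0)$, together with small smoothing parameters, gives $\mg^*\omega^{\ironed}_{\mZ(\Rb)}=\omega^{R_0}_{\mY(\Rb)}$ on $\mY(\Rb)\setminus\mY(\Sb/2)$, Equation \eqref{eq:ironed_outside}. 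The glued form therefore tames the original complex structure across $p$.
\item Proposition \ref{prp:finding_ellipsoids} then checks, via the moment map of $r_1F^{\ironed}_{\mZ(\Rb)}+r_2F_\mZ$, that the leftover term $r_2\omega_\mZ$ from ironing does not shrink $\Ell(r_1R_0\bm a^{-1})$.
\end{itemize}

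Finally, your normalisation is off by a factor of $\pi$. With moment map $\pi|z_i|^2$, the facet $\{\sum a_ix_i=\varepsilon\}$ gives $T_{ij}$ area $\varepsilon/\lcm(a_i,a_j)$, whereas the class $\pi(g^*D-\varepsilon C)$ gives $\pi\varepsilon/\lcm(a_i,a_j)$. The paper uses $\frac12|z_i|^2$ with $2\pi$-periodic angles, so that $\{\bm a\cdot\bmm\le\varepsilon/2\}$ is exactly $\Ell(\varepsilon\bm a^{-1})$.
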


To apply the theorem, you need a suitable divisor \(D\), which
can be hard to find; this plays the role of the inflationary
curve. We will illustrate the use of this theorem by
constructing all of the exceptional steps in the McDuff--Schlenk
staircase and constructing some new ellipsoid embeddings into
ellipsoids. See Section \ref{sct:examples} for a wide array of
applications.

\subsection{Advantages}
\label{sct:advantages}
There are several advantages to working in the category of
complex projective varieties. First, the weighted blow-up is
almost always singular, and the intersection theory of curves on
singular projective varieties is well-developed and convenient to
use. One could further resolve and work with a smooth manifold,
but this ends up clouding the simplicity of computations with
additional irrelevant terms. Second, one doesn't need to make
any assumptions on what the inflationary divisors look like, or
how they intersect the exceptional divisor of the weighted
blow-up: one simply applies the Nakai--Moishezon criterion to
find an ample divisor and hence a symplectic form. Finally, we
observe that the original construction of the exceptional steps
by McDuff and Schlenk was quite involved: they had to enumerate
{\em all} possible obstructive curves and find the strongest
obstructions in each given interval of slopes. By contrast, as
soon as we find a curve \(D\) to which we can apply our
construction, we find an ellipsoid.

\subsection{Disadvantages and difficulties}
\label{sct:disadvantages}
There are also several disadvantages. The main disadvantage is
that the method is wholly unsuitable for constructing ellipsoids
whose slopes lie outside the steps of the McDuff--Schlenk
staircase, that is when there is a one-parameter family of
slopes which yield full fillings. To construct something
arbitrarily close to the volume constraint one would need to use
a certain infinite sequence of divisors \(D\) and weighted
blow-ups adapted to them, so that the associated weighted
Seshadri constant gets closer and closer to a limiting value but
never quite achieves it; see Example \ref{exm:slope_10}. It
seems likely that this infinite sequence of divisors is
precisely what one needs to prove the Nagata conjecture.

When working with weighted blow-ups there are also subtleties:
there is not a single operation which, given a point and a
slope, produces a canonical weighted blow-up. Indeed, one can
construct weighted blow-ups by iteratedly blowing up at
infinitely-near points and then contracting all but the last
exceptional curve; the choices of which points one blows up give
moduli for the construction; we explore this subtlety in Section
\ref{sct:farey}. Rather than being a difficulty, this is really an
advantage: we can make a ``general'' choice of points to
blow-up, which can be helpful for constructing the right divisor
\(D\).

\subsection{Remarks on the proof of Theorem \ref{thm:ellipsoids_galore}}

Theorem \ref{thm:ellipsoids_galore} depends on some intermediate
results (Proposition \ref{prp:ellipsoids_1} and Corollary
\ref{cor:ellipsoids_2}) which are weighted analogues of a result of
McDuff and Polterovich's {\cite[Corollary 2.1.D]{McDuffPolterovich}}
for ordinary blow-ups. The strategy of our proof is very similar, but
there are subtleties in modifying the proof to handle weighted
blow-ups and ellipsoids:
\begin{itemize}
\item[1.] The McDuff--Polterovich proof uses K\"{a}hler forms
  constructed by pulling back the standard K\"{a}hler form on
  \((\CC^\times)^n\) via a map of the form
  \(\bm{z}\mapsto\frac{h(|\bm{z}|)}{|\bm{z}|}\bm{z}\) for some
  strictly increasing function \(h\). These maps are called
  {\em monotone embeddings}. If one uses
  \(\sqrt{\sum_{j=1}^na_j|z_j|^2}\) instead of \(|\bm{z}|\)
  then the pulled back form is not K\"{a}hler (it has
  \((2,0)\) and \((0,2)\)-parts), and it would be harder to
  check tameness. On a related note, the Hamiltonian circle
  action generated by
  \(H(\bm{z})=\frac{1}{2}\sum_{j=1}^na_j|z_j|^2\) extends to a
  holomorphic \(\CC^\times\)-action with weights
  \(a_1,\ldots,a_n\), but this does not carry level sets of
  \(H\) (ellipsoids) to level sets of \(H\) unless
  \(\bm{a}=(1,\ldots,1)\).
\item[2.] The weighted blow-up of \(\CC^n\) sits inside
  \(\CC^n\times\PP(\bm{a})\) where \(\PP(\bm{a})\) is a weighted
  projective space. The ``standard'' symplectic form on the ordinary
  blow-up can be written as a linear combination of two contributions:
  the pullback of the Fubini--Study form from the projection to
  \(\PP(1,\ldots,1)\), and the pullback of the standard symplectic
  form on \(\CC^n\). However, the symplectic form on the weighted
  blow-up cannot be written in this form: the inclusion of the
  weighted blow-up into the product has vanishing derivatives along
  the exceptional locus, so the pullback of the standard symplectic
  form has degeneracies along the normal directions to the exceptional
  locus of the weighted blow-up. Note that this is {\em in addition}
  to the complication that the weighted blow-up is singular.
\end{itemize}
To get around both of these issues, we use the
Guillemin--Abreu theory of toric K\"{a}hler metrics and
symplectic potentials on orbifolds to find ``standard''
symplectic forms and to interpolate between standard forms on
the weighted blow-up and weighted blow-down.

\subsection{Examples}

In Section \ref{sct:examples_nodal}, we use the nodal cubic curve in
\(\CP^2\) as our divisor to construct embeddings of
\(\Ell(\sigma_1,\sigma_2)\) into \(\CP^2\) with:
\begin{itemize}
\item slope \(s\coloneqq \sigma_2/\sigma_1\in
  \left(\frac{7+\sqrt{45}}{2},7\right]\)
    and packing ratios up to \(\frac{9s}{(1+s)^2}\), and
  \item slope \(s\coloneqq \sigma_2/\sigma_1\in
  \left(7,\frac{64}{9}\right]\)
    and packing ratios up to \(\frac{64s}{9}\).
\end{itemize}
These represent the optimal ellipsoid embeddings for the first
post-Fibonacci step in the McDuff--Schlenk staircase. In Section
\ref{sct:examples_beyond}, we use more complicated curves of higher
degree to construct the remaining post-Fibonacci steps; these curves
appear in the work of Dumnicki, Harbourne, K\"{u}ronya, Ro\'{e} and
Szemberg \cite{DumnickiEtAl} as ``supraminimal curves''. In that
context these curves are responsible for steps in a different but
related staircase: the graph of the function \(\hat{\mu}\) which is
another way of getting at what we are calling the weighted Seshadri
constant. Indeed, this paper grew out of my attempts to understand the
relationship between their work and that of McDuff--Schlenk
\cite{McDuffSchlenk}.

In Section \ref{sct:examples_unicuspidal}, we explain how to reproduce
the results of McDuff and Siegel \cite{McDuffSiegelInfiniteStairs} on ellipsoid
embeddings via inflation along unicuspidal curves in our language,
which in particular recovers the Fibonacci part of the staircase. In
Sectino \ref{sct:examples_outside}, we discuss the difficulties of
using our method to produce embeddings when the staircase is not
piecewise-linear. Finally, in Section \ref{sct:examples_ellipsoids},
we give some examples of ellipsoid embeddings in other ellipsoids.

\subsection{Outline}

After recapping some prerequisites and fixing conventions
(Section \ref{sct:preliminaries}), we define what we mean by
weighted blow-up and the associated Seshadri constants (Section
\ref{sct:weighted_seshadri}). Then there follow two sections in
which we explain how to iron out a K\"{a}hler form along the
exceptional locus of a weighted blow-up (Section
\ref{sct:ironing}) and how to extend an ironed form over the
blow-down (Section \ref{sct:kaehler}). Subsection
\ref{sct:kaehler_proof} contains the proof of Theorem
\ref{thm:ellipsoids_galore}. Section \ref{sct:surfaces} works
out in detail how to calculate the weighted Seshadri constant in
the case of complex surfaces and Section \ref{sct:examples}
applies this to a range of examples.

\subsection{Acknowledgements}

This paper grew out of my attempts to process talks and
conversations at the 2025 Workshops on Singular Algebraic Curves
and Quantitative Symplectic Embeddings in Geneva and Les
Mar\'{e}cottes. I would like to thank the organisers (Grisha
Mikhalkin, Quim Ro\'{e}, Felix Schlenk and Kyler Siegel) for
arranging this highly stimulating workshop, and to thank all the
speakers and participants for their inspiring talks and
penetrating observations. I would also like to thank Nikolas
Adaloglou, Jo\'{e} Brendel, Johannes Hauber, Dusa McDuff, Leonid
Polterovich, Quim Ro\'{e}, Felix Schlenk, Joel Schmitz, Kyler
Siegel and Giancarlo Urz\'{u}a for their subsequent interest in
and helpful discussions on this topic, and their comments on
this paper. I would also like to thank Misha Entov for telling
me about the thesis of Mark Gudiev, which also led me to his
papers with Verbitsky, and Xu Wang for informing me about his
paper with Franz Luef. My research is supported by EPSRC
Standard Grant EP/W015749/1. For the purpose of open access, the
author has applied a Creative Commons Attribution (CC BY)
licence to any Author Accepted Manuscript version arising.

\section{Preliminaries}
\label{sct:preliminaries}
\subsection{Notation, normalisations and conventions}

\begin{remark}[Broadcasting]
  Throughout this paper, we will write expressions like \(\bmm^{-1}\),
  \(\log(\bmm)\), or \(\frac{\partial G}{\partial\bmm}\) where
  \(\bmm=(\mm_1,\ldots,\mm_n)\) is a vector to mean the vector
  obtained by applying the relevant operation elementwise to \(\bmm\)
  to obtain a new vector like \((\mm_1^{-1},\ldots,\mm_n^{-1})\),
  \((\log(\mm_1),\ldots,\log(\mm_n))\), or \((\partial G/\partial
  \mm_1,\ldots,\partial G/\partial \mm_n)\). This practice, known as
     {\em broadcasting}, will simplify notation.
\end{remark}

\begin{remark}[Moment maps]
  The standard moment map \(\CC^n\to\RR^n\) sends \(\bm{z}\) to
  \[\left(\frac{1}{2}|z_1|^2,\ldots,\frac{1}{2}|z_n|^2\right).\]
  The moment image of \(\CC^n\) is the positive orthant,
  \(\RR_{\geq 0}^n\). The moment image of the ellipsoid
  \(\Ell(\bm{\sigma})\) is the subset
  \[\left\{\bm{\mu}\in\RR_{\geq 0}^n \,:\,
      \sum_{j=1}^n\sigma_i^{-1}\mu_i \leq \frac{1}{2}\right\}\]
  This has a slanted face with outward-pointing normal
  \(\bm{\sigma}^{-1}=(\sigma_1^{-1},\ldots,\sigma_n^{-1})\). In
  this paper, we will assume that a positive multiple of this
  normal, say \(R\bm{\sigma}^{-1}\), is a primitive integer
  vector \(\bm{a}\):
  \[\bm{\sigma}^{-1}=R^{-1}\bm{a}.\] It
  will often be more convenient to work with this normal, in which
  case we will write \(\Ell(R\bm{a}^{-1})\). In terms of \(R\) and
  \(\bm{a}\), the moment image of \(\Ell(R\bm{a}^{-1})\) and its
  slanted face are:
  \[\trifil_{\bm{a},\,R}\coloneqq \left\{\bm{\mu}\in\RR_{\geq 0}^n\,:\,
      \bm{a}\cdot\bm{\mu}\leq \frac{R}{2}\right\},\qquad
    \tri_{\bm{a},\,R}\coloneqq \left\{\bm{\mu}\in\RR_{\geq 0}^n\,:\,
      \bm{a}\cdot\bm{\mu}=\frac{R}{2}\right\}\]
\end{remark}

\begin{remark}[Symplectic area and affine length]
  Recall that the preimage of an edge of a moment polytope under
  a moment map is a symplectic sphere of symplectic area
  \(2\pi\ell\) where \(\ell\) is the affine length of the
  edge. We will normalise the Fubini--Study form on \(\CP^n\) as
  the symplectic reduction of the unit sphere in \(\CC^{n+1}\),
  so that it gives a line area \(\pi\): with this normalisation,
  the moment image is
  \(\tri_{(1,\ldots,1),\,1}\subseteq\RR^{n+1}\), a \(n\)-simplex
  whose edges have affine length \(1/2\).
\end{remark}

\begin{remark}[Symplectic forms from ample divisors]\label{rmk:ample}
  Recall that if \(Z\) is a complex projective variety and \(D\)
  is an ample \(\QQ\)-Cartier \(\QQ\)-divisor on \(Z\) then we
  obtain a K\"{a}hler form on the smooth locus of \(Z\) as
  follows. Let \(N>0\) be an integer such that \(ND\) is a very
  ample Cartier divisor and let \(L\) be the associated line
  bundle. Since \(L\) is very ample, evaluation of sections
  defines an embedding of \(Z\) into
  \(\PP(H^0(L)^\vee)\). Pulling back the Fubini--Study form
  along this embedding and then rescaling it by \(1/N\) yields a
  K\"{a}hler form whose cohomology class is Poincar\'{e}-dual to
  \(\pi D\).
\end{remark}

\begin{remark}[Orbifold Kodaira embedding]\label{rmk:orbifold_kodaira_embedding}
  We refer the reader to the lucid account of Ross and Thomas
  \cite{RossThomas} for a helpful user's guide to orbifolds. If
  \(Z\) is an orbifold then one can ask whether a K\"{a}hler
  form on the smooth locus extends to a K\"{a}hler orbifold
  differential form over the orbifold locus. Recall that an
  orbifold differential form is the same thing as a coherent
  choice of \(\Gamma\)-invariant differential form on each local
  uniformising chart \(\CC^n\to\CC^n/\Gamma\). Note that the
  K\"{a}hler form constructed in Remark \ref{rmk:ample} is
  degenerate along the orbifold locus when pulled back along a
  local uniformising chart \(\CC^n\to\CC^n/\Gamma\). One can fix
  this by working instead with an orbi-ample orbibundle \(L\)
  {\cite[Section 2.4 and Definition 2.7]{RossThomas}}. An
  orbibundle is locally just an equivariant line bundle on each
  orbifold chart; in particular, the stabiliser subgroup at a
  point \(z\in Z\) acts on the line \(L_z\). If we define the
  order \(\OP{ord}(Z)\) to be the least common multiple of the
  sizes of point-stabilisers then \(L^{\otimes\OP{ord}(Z)}\) is
  an honest line bundle. Ross and Thomas define an orbibundle to
  be {\em locally ample} if this action is faithful for all
  \(z\in Z\), and {\em orbi-ample} if it is locally ample and
  \(L^{\otimes\OP{ord}(Z)}\) is ample. Given an orbi-ample line
  bundle, one can construct an orbifold embedding
  \(Z\to \PP(\bm{n})\) into a weighted projective space
  \(\PP(\bm{n})\) built out of sections of powers of \(L\) (see
  {\cite[Proposition 2.11]{RossThomas}}) and the pullback of the
  Fubini--Study orbifold K\"{a}hler form {\cite[Definition
    3.3]{RossThomas}} yields an orbifold K\"{a}hler form on
  \(Z\).
\end{remark}

\begin{background}[Intersection numbers]
  We will frequently work with intersection numbers between divisors on
  singular complex projective surfaces; see {\cite[1.1.C]{Lazarsfeld}}
  for an explanation of how to make sense of this. If \(f\colon X\to Y\)
  is a morphism of schemes then we write \(f^*B\) for the total
  transform of a divisor \(B\subset Y\) and \(f^{-1}_*B\) for its proper
  transform. We will make use of the push-pull formula: if \(f\colon
  X\to Y\) is a proper morphism of surfaces and \(A\subset X\) and
  \(B\subset Y\) are curves then
  \begin{equation}\label{eq:push_pull}
    f_*(A\cdot f^*B)=f_*A\cdot B.
  \end{equation} See \cite[Lemma 82.19.4, Tag 0EQT]{Stacks} for
  a more general statement at the level of Chow groups. For example,
  this tells us that:
  \begin{itemize}
  \item \(f^{-1}_*B\cdot f^*B = B^2\) since
    \(f_*f^{-1}_*B=B\);
  \item if \(A\) is contracted by \(f\) then \(A\cdot f^*B=0\)
    since \(f_*A=0\).
  \end{itemize}
\end{background}

\section{Weighted Seshadri constants}
\label{sct:weighted_seshadri}
\subsection{Seshadri constants}

\begin{setup}\label{pg:setup}
  Let \(Y\) and \(Z\) be normal complex projective varieties and let
  \(g\colon Y\to Z\) be a projective birational morphism with
  exceptional locus \(C=\exc(g)\). Let \(D\) be an ample
  \(\QQ\)-divisor on \(Z\). Note that, by {\cite[Chapter II, Theorem
      7.17]{Hartshorne}}, \(g\) is the blow-up of \(Z\) along a
  coherent ideal sheaf \(\mI\).
\end{setup}

\begin{definition}\label{dfn:seshadri_constant}
  In the context of Setup \ref{pg:setup}, let
  \(\Upsilon_\varepsilon\coloneqq g^*D-\varepsilon C\) and define the
          {\em Seshadri constant of \((Z,D)\) along \(g\)} to be
  \[\varepsilon(Z,D;g)\coloneqq \sup\left\{\varepsilon\geq
      0\,:\,\Upsilon_\varepsilon\text{ is ample}\right\}.\] This
  Seshadri constant is precisely the reciprocal of the
  Cutkosky--Ein--Lazarsfeld \(s\)-invariant of  \(\mI\), see
  {\cite[Definition 1.1]{CutkoskyEinLazarsfeld}}. 
\end{definition}

\begin{remark}\label{rmk:nef}
  Recall that a divisor \(\Upsilon\) is called {\em nef} if
  \(\Upsilon\cdot A\geq 0\) for all irreducible curves
  \(A\). One can equivalently define
  \[\varepsilon(Z,D;g)\coloneqq \max\left\{\varepsilon\geq
      0\,:\,\Upsilon_\varepsilon\text{ is nef}\right\}.\] To see
  that this is equivalent to the definition using ampleness
  observe that \(g^*D-\epsilon C\) is ample for sufficiently
  small \(\epsilon\) by {\cite[Chapter II, Proposition
    7.10(b)]{Hartshorne}}). By Kleiman's theorem {\cite[Theorem
    1.4.23]{Lazarsfeld}} the nef cone of a projective scheme is
  the closure of its ample cone, so the path
  \(\epsilon\mapsto g^*D-\epsilon C\) leaves the ample cone at
  the last instant it is nef.
\end{remark}

\subsection{Weighted blow-ups}

In this paper, we will focus on a specific class of ideal
sheaves defining non-reduced points; we will call the
corresponding blow-ups {\em weighted blow-ups}.

\begin{definition}
  Consider the cone \(\sigma\) spanned by the standard basis
  vectors \(\bm{e}_1,\ldots,\bm{e}_n\). The associated affine
  toric variety is \(\mZ=\CC^n\). Fix a primitive integer vector
  \(\bm{a}\in\ZZ^n\) with positive entries. Let \(\rho\) be the
  ray pointing in the \(\bm{a}\)-direction, and, for each
  \(j=1,\ldots,n\), let \(\sigma_j\) be the cone spanned by
  \(\bm{a}\) and all the basis vectors \(\bm{e}_k\) except
  \(\bm{e}_j\). The cones \(\sigma_1,\ldots,\sigma_n\) now form
  a fan \(\Sigma_{\mY}\) subdividing \(\sigma\). See Figure
  \ref{fig:toric_picture} for illustrations when \(n=2,3\).
\end{definition}

\begin{remark}\label{rmk:polytope}
  The fan \(\Sigma_{\mY}\) is the inward normal fan to the
  polytope
  \[P_\mY(R) \coloneqq \{\bm{\mu}\in\RR_{\geq 0}^n \,:\,
    \bm{a}\cdot\bm{\mu}\geq R/2\}\] for any \(R>0\). This
  polytope is obtained from the positive orthant
  \(\RR^n_{\geq 0}\) by chopping off a corner to obtain a new
  face \(\tri_{\bm{a},\,R}\) with inward normal \(\bm{a}\). See
  Figure \ref{fig:toric_picture} for illustrations when
  \(n=2,3\).
\end{remark}

\begin{definition}
  Let \(\mY\) be the normal toric variety associated to
  \(\Sigma_{\mY}\) and let \(\mg\colon \mY\to \mZ\) be the toric
  morphism coming from the inclusion of the cones
  \(\sigma_j\subseteq \sigma\). The variety \(\mY\) is a complex
  orbifold, with singularities at the toric fixed points, and
  the exceptional locus of \(\mg\) is the toric divisor \(\mC\)
  associated to the ray \(\rho\). We call \(\mg\) the {\em toric
    weighted blow-up with weights \(\bm{a}\)}.
\end{definition}

\begin{figure}[htb]
  \centering
  \begin{tikzpicture}
    \node at (-2,1) {Fans \(\Sigma_{\mY}\):};
    \node at (1,3) {\(n=2\)};
    \draw[->] (0,0) -- (1,0) node [below] {\(\bm{e}_1\)};
    \draw (1,0) -- (2,0);
    \draw[->] (0,0) -- (0,1) node [left] {\(\bm{e}_2\)};
    \draw (0,1) -- (0,2);
    \draw[very thick,->] (0,0) -- (1,1.5) node [below right] {\(\bm{a}\)};
    \draw (1,1.5) -- (4/3,2);
    \node at (1.2,0.6) {\(\sigma_2\)};
    \node at (0.5,1.3) {\(\sigma_1\)};
    \begin{scope}[shift={(0,-3)}]
      \node at (-2,1) {Polytopes \(P_{\mY}\):};
      \draw (0,2) -- (0,1) -- (1.5,0) -- (2,0);
      \draw[very thick,->] (0.75,0.5) -- ++ (1,1.5) node [below right] {\(\bm{a}\)};
      \node at (0.2,0.2) {\(\tri_{\bm{a},\,R}\)};
    \end{scope}
  \end{tikzpicture}\qquad
  \begin{tikzpicture}[z={(90:15mm)},x={(-135:15mm)},y={(-10:15mm)}]
    \node at (0,0,1.5) {\(n=3\)};
    \draw[->] (0,0,0) -- (1,0,0) node (e_1) {};
    \node at (e_1) [left] {\(\bm{e}_1\)};
    \draw[->] (0,0,0) -- (0,1,0) node (e_2) {};
    \node at (e_2) [right] {\(\bm{e}_2\)};
    \draw[->] (0,0,0) -- (0,0,1) node (e_3) {};
    \node at (e_3) [left] {\(\bm{e}_3\)};
    \node (a) at (0.8,1,0.9) {};
    \filldraw[lightgray,opacity=0.5,draw=black] (0,0,0) -- (a.center) -- (e_1.center) -- cycle;
    \filldraw[lightgray,opacity=0.5,draw=black] (0,0,0) -- (a.center) -- (e_2.center) -- cycle;
    \filldraw[lightgray,opacity=0.5,draw=black] (0,0,0) -- (a.center) -- (e_3.center) -- cycle;
    \filldraw[lightgray,opacity=0.5,draw=black] (0,0,0) -- (e_2.center) -- (e_1.center) -- cycle;
    \filldraw[lightgray,opacity=0.5,draw=black] (0,0,0) -- (e_3.center) -- (e_1.center) -- cycle;
    \filldraw[lightgray,opacity=0.5,draw=black] (0,0,0) -- (e_2.center) -- (e_3.center) -- cycle;
    \node (sigma_2) at (1,0,0.7) {\(\sigma_2\)};
    \draw[->] (sigma_2) -- (0.5,0.2,0.5);
    \node (sigma_3) at (1,0.7,0) {\(\sigma_3\)};
    \draw[->] (sigma_3) -- (0.5,0.5,0.2);
    \node (sigma_1) at (0,0.7,1) {\(\sigma_1\)};
    \draw[->] (sigma_1) -- (0.2,0.6,0.5);
    \draw[very thick,->] (0,0,0) -- (a.center);
    \node at (a) [right] {\(\bm{a}\)};
    \begin{scope}[shift={(0,0,-2)}]
      \draw (0,0,1) -- (0,0,3/9) -- (3/8,0,0) -- (1,0,0);
      \draw (0,0,1) -- (0,0,3/9) -- (0,3/10,0) -- (0,1,0);
      \draw (0,1,0) -- (0,3/10,0) -- (3/8,0,0) -- (1,0,0);
      \node at (0.6,0,0.6) {\(\tri_{\bm{a},\,R}\)};
      \begin{scope}[shift={(3*8/245,3*10/245,3*9/245)}]
        \draw[very thick,->] (0,0,0) -- (0.8,1,0.9) node (a2) {};
        \node at (0.8,1.1,0.8) {\(\bm{a}\)};
      \end{scope}
    \end{scope}
  \end{tikzpicture}
  \caption{The fans \(\Sigma_{\mY}\) (above) and dual polytopes
    \(P_{\mY}\) (below) for the toric model of the weighted
    blow-up for \(n=2\) (left) and \(n=3\)
    (right).}\label{fig:toric_picture}
\end{figure}

A weighted blow-up will be a blow-up locally analytically
modelled on \(\mg\colon\mY\to\mZ\). To make this precise, let us
define a family of subsets of \(\mZ\) and \(\mY\); at this
point, these subsets could equally be balls or polydiscs rather
than ellipsoids, but it will be more convenient to use
ellipsoids later when we come to equip our varieties with
K\"{a}hler forms.

\begin{definition}
  Given \(\bm{a}\), define the function \(H\colon\mZ\to\RR\) by
  \(H(\bm{z})=\sum_{j=1}^na_j|z_j|^2\) whose level sets
  \(H^{-1}(R)\) are the ellipsoids
  \(\mZ(R)\coloneqq \Ell(R\bm{a}^{-1})\). Let
  \(\mY(R)=q^{-1}(\mZ(R))\).
\end{definition}

\begin{definition}\label{dfn:weighted_blowup}
  Let \(Z\) be a smooth complex projective variety and
  \(p\in Z\) be a point. Let \(\mI\) be an ideal sheaf supported
  at \(p\) and let \(g\colon Y=\OP{Bl}_{\mI}(Z)\to Z\) be the
  corresponding blow-up. We say that \(g\) is a {\em weighted
    blow-up} with weights \(\bm{a}\) if there exists a
  commutative diagram of analytic morphisms

  \begin{center}
    \begin{tikzpicture}
      \node (V) at (0,0) {\(\mY(R)\)};
      \node (W) at (0,-2) {\(\mZ(R)\)};
      \node (Y) at (2,0) {\(Y\)};
      \node (Z) at (2,-2) {\(Z\)};
      \draw[->] (V) -- (Y) node [midway,above] {\(\jmath\)};
      \draw[->] (V) -- (W) node [midway,left] {\(\mg\)};
      \draw[->] (W) -- (Z) node [midway,above] {\(\iota\)};
      \draw[->] (Y) -- (Z) node [midway,right] {\(g\)};
    \end{tikzpicture}
  \end{center}

  where \(\iota\) and \(\jmath\) are embeddings and \(\mg\) is
  the toric weighted blow-up with weights \(\bm{a}\).
\end{definition}

\begin{remark}
  Weighted blow-ups are more subtle than the ordinary blow-up,
  which is the special case \(\bm{a}=(1,\ldots,1)\). For
  example:
  \begin{itemize}
  \item the ordinary blow-up depends on the holomorphic
    embedding \(\iota\) only through the value \(\iota(0)\),
    whereas more general weighted blow-ups depend heavily on
    this choice.
  \item the result of weighted blow-up with any weights other
    than \((1,\ldots,1)\) is always singular.
  \end{itemize}
\end{remark}

\begin{remark}[Alternative descriptions]
  Using Cox's homogeneous coordinate ring description of the toric
  variety associated to the fan \(\Sigma\), we see that \(\mY\) is
  the GIT quotient of \(\CC^{n+1}\) (with coordinates
  \(z_0,z_1,\ldots,z_n\)) by the \(\CC^\times\)-action with
  weights \(-1,a_1,\ldots,a_n\), that is \(t\in\CC^\times\)
  acts as
  \[(t^{-1}z_0,t^{a_1}z_1,\ldots,t^{a_n}z_n).\]
  The unstable locus is \(z_1=\cdots=z_n=0\), so
  \[\mY = \left(\CC\times(\CC^n\setminus 0)\right)/\CC^\times.\]
  The morphism \(\mg\colon\mY\to \mZ\)
  is
  \[\mg([z_0:z_1:\cdots:z_n]) =
    (z_0^{a_1}z_1,\ldots,z_0^{a_n}z_n)\] and we see that
  \(\mg^{-1}(\bm{0})=\{z_0=0\}\) is a copy of the weighted
  projective space \(\PP(\bm{a})\). There is also a well-defined
  projection \(\mproj\colon\mY\to\PP(\bm{a})\) defined by
  \[\mproj([z_0\,:\,z_1\,:\,\cdots\,:\,z_n])=
    [z_1\,:\,\cdots\,:\, z_n]\in\PP(\bm{a}).\]
  The morphism \((\mg,\mproj)\colon \mY\to
  \mZ\times\PP(\bm{a})\) is an injection whose image is the
  subscheme
  \[\left\{(z_1,\ldots,z_n,[w_1\,:\, \cdots\,:\, w_n])\,:\,
    z_j^{a_k}w_k^{a_j}=z_k^{a_j}w_j^{a_k}\text{ for }1\leq
    j,k\leq n\right\}.\] This is often how the weighted blow-up is
  defined, but has the disadvantage that it is usually not
  normal: it is singular along the whole of \(\mC\). Indeed,
  \((\mg,\mproj)\) is its normalisation, in particular \(\mY\) is smooth in
  codimension \(1\).
\end{remark}

\begin{lemma}\label{lma:C_intersection}
  The divisor \(\left(\prod_{j=1}^na_j\right)\mC\) is
  Cartier. If \(T_{ij}\) is the toric curve corresponding to the
  wall \(\tau_{ij}\) separating the top-dimensional cones
  \(\sigma_i\) and \(\sigma_j\) then,
  \[\mC\cdot T_{ij} = -\frac{1}{\lcm(a_i,a_j)}.\]
\end{lemma}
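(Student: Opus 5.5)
The plan is to work entirely with the toric description of \(\mY\) from the fan \(\Sigma_{\mY}\). We treat the two claims separately: Cartier-ness via local data on each maximal cone, and the intersection number via linear equivalence together with the standard toric formula for a transverse intersection at a fixed point.

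\emph{Cartier-ness.} A torus-invariant \(\QQ\)-divisor \(\sum_\rho c_\rho D_\rho\) is Cartier exactly when, on each maximal cone \(\sigma_j\), there is an integral \(m_j\in(\ZZ^n)^\vee\) with \(\langle m_j,v_\rho\rangle=-c_\rho\) for every ray \(v_\rho\) of \(\sigma_j\). For \(\mC\) the coefficients are \(c_\rho=1\) and \(c_{\bm{e}_k}=0\). On \(\sigma_j\) the conditions are therefore \(\langle m_j,\bm{e}_k\rangle=0\) for \(k\neq j\) and \(\langle m_j,\bm{a}\rangle=-1\). These force \(m_j=-\bm{e}_j^*/a_j\). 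This is integral after multiplying by \(a_j\), so \(N\mC\) is Cartier on \(U_{\sigma_j}\) whenever \(a_j\mid N\). Taking \(N=\prod_j a_j\), which is divisible by every \(a_j\), gives the first claim (indeed \(\lcm(a_1,\ldots,a_n)\) already suffices).

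\emph{Intersection number.} The character \(\chi^{\bm{e}_i^*}\) has divisor \(a_i\mC+D_i\), where \(D_i\) is the toric divisor of the ray \(\bm{e}_i\). Hence \(\mC\sim_{\QQ}-\frac{1}{a_i}D_i\), and it suffices to compute \(D_i\cdot T_{ij}\).

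The wall \(\tau_{ij}\) is spanned by \(\bm{a}\) and \(\bm{e}_k\) for \(k\neq i,j\). Of the two maximal cones containing it, only \(\sigma_j\) contains \(\bm{e}_i\). So \(D_i\) meets \(T_{ij}\) in the single fixed point \(x_{\sigma_j}\) and does not contain \(T_{ij}\). By the standard formula for simplicial toric varieties (Fulton, \emph{Introduction to toric varieties}, \S5.1, or Cox--Little--Schenck, Lemma 6.4.2),
\[D_i\cdot T_{ij}=\frac{\OP{mult}(\tau_{ij})}{\OP{mult}(\sigma_j)}.\]

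It remains to compute the two multiplicities.
\begin{itemize}
\item \(\OP{mult}(\sigma_j)\) is \(|\det|\) of the matrix with columns \(\bm{a}\) and \(\bm{e}_k\) (\(k\neq j\)). This determinant equals \(a_j\).
\item \(\OP{mult}(\tau_{ij})\) is the index of \(\ZZ\bm{a}+\sum_{k\neq i,j}\ZZ\bm{e}_k\) in its saturation. Quotienting by the \(\bm{e}_k\), this is the index of \(\ZZ(a_i,a_j)\) in its saturation in \(\ZZ^2\), namely \(\gcd(a_i,a_j)\).
\end{itemize}
Therefore
\[\mC\cdot T_{ij}=-\frac{1}{a_i}\cdot\frac{\gcd(a_i,a_j)}{a_j}=-\frac{1}{\lcm(a_i,a_j)}.\]

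\emph{Main obstacle.} The delicate step is getting the orbifold intersection formula and the multiplicity conventions right, since \(\mY\) is only \(\QQ\)-factorial. As an independent check, I would also verify the answer directly: \(T_{ij}\subset\mC\cong\PP(\bm{a})\) is the curve \(\PP(a_i,a_j)\), and \(\mC|_{\mC}\) corresponds to \(\mathcal{O}(-1)\). The degree of \(\mathcal{O}(-1)\) on \(\PP(a_i,a_j)\) is \(-\gcd(a_i,a_j)/(a_ia_j)\), in agreement with the computation above.
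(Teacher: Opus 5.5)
Your proof is correct. The Cartier part matches the paper's, but you compute \(\mC\cdot T_{ij}\) by a genuinely different route.

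\textbf{Cartier-ness.} The paper also writes down Cartier data on each \(\sigma_j\): it uses the integral data \(-(\prod_{k\neq j}a_k)\bm{e}_j\) for \((\prod_k a_k)\mC\). Your remark that \(\lcm(a_1,\ldots,a_n)\mC\) is already Cartier is a correct sharpening.

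\textbf{Intersection number, the paper's route.} The paper keeps working with the Cartier divisor \((\prod_k a_k)\mC\), which contains \(T_{ij}\). It applies the wall-curve formula of Cox--Little--Schenck (Proposition 6.3.8), namely \((\bm{m}_{\sigma_i}-\bm{m}_{\sigma_j})\cdot\bm{u}\), where \(\bm{u}\in\sigma_j\) generates the lattice modulo the span of \(\tau_{ij}\). The factor \(\gcd(a_i,a_j)\) comes from B\'ezout when choosing \(\bm{u}\), and one divides by \(\prod_k a_k\) at the end.

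\textbf{Intersection number, your route.} You use the principal divisor of \(\chi^{\bm{e}_i^*}\) to replace \(\mC\) by \(-\frac{1}{a_i}D_i\). This divisor meets \(T_{ij}\) properly, at the single fixed point \(x_{\sigma_j}\). You then apply the simplicial formula \(\OP{mult}(\tau_{ij})/\OP{mult}(\sigma_j)\). In this form, \(\gcd(a_i,a_j)\) appears as the multiplicity of the wall, and \(a_j\) as the order of the quotient singularity at \(x_{\sigma_j}\).

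\textbf{Comparison.} Your route relies on \(\QQ\)-factoriality, which is automatic since the fan is simplicial, and on an auxiliary divisor. In exchange, it avoids choosing integral Cartier data and the sign conventions of the difference formula, and it makes the origin of each factor transparent. The paper's route applies a single uniform formula directly to \(\mC\), without moving it off the curve. Your independent check via the degree of \(\mathcal{O}(-1)\) on \(\PP(a_i,a_j)\) is also consistent.
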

\begin{proof}
  Recall (e.g. from {\cite[Theorem 4.2.8]{CoxLittleSchenck}})
  that a multiple \(m\mC\) is Cartier if and only if it admits
  {\em Cartier data}, namely a choice of vectors
  \(\bm{m}_{\sigma_j}\) for each top-dimensional cone
  \(\sigma_j\) such that \(\bm{m}_{\sigma_j}\cdot \bm{e}_k=0\)
  for all \(k\neq j\) and \(\bm{m}_{\sigma_j}\cdot
  \bm{a}=-m\). We see that \(\left(\prod_{j=1}^na_j\right)\mC\) is
  Cartier, with Cartier data
  \(\bm{m}_{\sigma_j}=-\left(\prod_{k\neq
      j}a_j\right)\bm{e}_j\).

  According to {\cite[Proposition 6.3.8]{CoxLittleSchenck}}, the
  intersection number
  \(\left(\prod_{j=1}^na_j\right)\mC\cdot T_{ij}\) is given by
  \((\bm{m}_{\sigma_i}-\bm{m}_{\sigma_j})\cdot \bm{u}\), where
  \(\bm{u}\) is an integer vector in \(\sigma_j\) whose
  projection to the quotient of \(\RR^n\) by \(\tau_{ij}\)
  generates the integer lattice. Since \(\tau_{ij}\) contains
  all vectors \(\bm{e}_k\) with \(k\neq i,j\), we can first
  project to the span of \(\bm{e}_i\) and \(\bm{e}_j\). Under
  this projection,
  \begin{itemize}
  \item \(\bm{a}\) projects to \((a_i,a_j)\), and
  \item \(\bm{u}\) projects to an integer point \((u_i,u_j)\) whose
    integer affine displacement \(a_ju_i-a_iu_j\) from the
    projection of \(\bm{a}\) is minimal, that is, equal to
    \(\gcd(a_i,a_j)\) by B\'{e}zout's identity.
  \end{itemize}
  This means that
  \((\bm{m}_{\sigma_i}-\bm{m}_{\sigma_j})\cdot
  \bm{u}=-\left(\prod_{k\neq
      i,j}a_k\right)(a_j\bm{e}_i-a_i\bm{e}_j)\cdot \bm{u} =
  \left(\prod_{k\neq i,j}a_k\right)\gcd(a_i,a_j)\). Putting this
  together gives the intersection number as stated.
\end{proof}

\begin{remark}
  Note that \(\mC\) is actually ``orbi-Cartier'' in the sense
  that it is the vanishing set of a section of an orbifold line
  bundle. Namely, let \(\mL\) be the quotient of
  \(\CC\times\left(\CC^n\setminus 0\right)\times\CC\) by
  \(\CC^\times\) acting with weights \((-1,a_1,\ldots,a_n,-1)\);
  this has an orbibundle projection \(\mL\to\mY\) and
  \([z_0:z_1:\cdots:z_n]\mapsto [z_0:z_1:\cdots:z_n:z_0]\)
  defines a section which vanishes precisely along \(\mC\). This
  line bundle is locally ample in the sense of Remark
  \ref{rmk:orbifold_kodaira_embedding}. If \(g\colon Y\to Z\) is
  a weighted blow-up with \(\exc(g)=C\) and
  \(N(g^*D-\epsilon C)\) is an ample Cartier divisor then, as
  remarked in Remark \ref{rmk:orbifold_kodaira_embedding}, the
  resulting K\"{a}hler form will be degenerate at the orbifold
  points. We can fix this by using instead
  \(N(g^*D-\epsilon C)+C=N(g^*D-(\epsilon-1/N)C)\). This is now
  orbi-ample, and since we can take \(N\) to be as large as we
  like, we can find an orbi-ample \(g^*D-\epsilon' C\) as close
  as we like to an ample \(g^*D-\epsilon C\). In other words, we
  can assume that \(\Upsilon_\varepsilon\) is orbi-ample in the
  definition of the Seshadri constant, and that we are working
  with genuine orbifold K\"{a}hler forms.
\end{remark}

\section{Ironing}
\label{sct:ironing}
Given a K\"{a}hler manifold, McDuff and Polterovich have a
procedure for ``ironing'' a symplectic form to make it flat near a
point whilst keeping it tame (see {\cite[Lemma
  5.5.B]{McDuffPolterovich}}). To blow down, we need to be able
to iron our symplectic forms to make them ``standard'' along
the exceptional locus. We need some lemmas, whose proofs closely
follow those of McDuff and Polterovich.

For convenience, let us order our basis of \(\RR^n\) so that the
entries of the vector \(\bm{a}\) satisfy
\[a_1\leq a_2\leq\cdots\leq a_n.\] We will continue to write
\(\mg\colon\mY\to\mZ\) for the toric weighted blow-up of
\(\mZ=\CC^n\) with weights \(\bm{a}\), and we also write
\(\omega_\mZ\) for the standard symplectic form on \(\mZ\).

\subsection{Concentrating volume near the origin}

\begin{lemma}[Compare with Part 1 of the proof of {\cite[Lemma
    5.5.B]{McDuffPolterovich}}]\label{lma:monotone_function} For
  any \(R>0\), \(\kappa>1\) and \(1>\epsilon>0\), there exists a
  K\"{a}hler form \(\tau_{\epsilon,\kappa}\) on \(\mZ(R)\) such
  that \(\tau_{\epsilon,\kappa}=\kappa^2\omega_\mZ\) on
  \(\mZ\left(\frac{R \epsilon^2 a_1}{4\kappa^2 a_n}\right)\) and
  \(\tau_{\epsilon,\kappa}=\epsilon^2\omega_\mZ\) near
  \(\partial\mZ(R)\).
\end{lemma}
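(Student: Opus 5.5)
The plan is to follow McDuff--Polterovich: build \(\tau_{\epsilon,\kappa}\) as \(\tfrac{i}{2}\partial\bar\partial\) of a radial potential \(f(|\bm{z}|^2)\), where \(|\bm{z}|\) is the round norm rather than \(\sqrt{H}\). With the round norm the form is \(U(n)\)-invariant and plainly of type \((1,1)\), which avoids the problem described in item 1 of the remarks on the proof of Theorem \ref{thm:ellipsoids_galore}. The ellipsoids enter only through the sandwiching
\[\{|\bm{z}|^2\leq r/a_n\}\subseteq \mZ(r)\subseteq\{|\bm{z}|^2\leq r/a_1\},\]
which is where the factor \(a_1/a_n\) in the statement comes from.

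\textbf{Positivity criterion.} Write \(u=|\bm{z}|^2\) and recall \(\omega_\mZ=\tfrac{i}{2}\partial\bar\partial u\). For a smooth function \(f\) of one variable, a direct computation gives
\[\tfrac{i}{2}\partial\bar\partial f(u)=\tfrac{i}{2}\Bigl(f'(u)\sum_j dz_j\wedge d\bar z_j+f''(u)\,\partial u\wedge\bar\partial u\Bigr).\]
Splitting into the radial direction and its orthogonal complement, this is positive if and only if \(f'>0\) and \(f'+uf''=(uf')'>0\). Set \(\psi(u)=uf'(u)\). The conditions then read: \(\psi>0\) and \(\psi\) strictly increasing. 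Conversely, any such \(\psi\) yields a potential \(f(u)=\int_{0}^{u}\psi(v)v^{-1}\,dv\), provided \(\psi(v)=\kappa^2v\) near \(0\) so that the integral converges. Adding a constant to \(f\) does not change \(\tfrac{i}{2}\partial\bar\partial f\). This freedom is crucial: we never need to match the value of \(f\) to \(\epsilon^2u\), only its derivative.

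\textbf{Choice of \(\psi\).} Put
\[u_0=\frac{R\epsilon^2}{4\kappa^2a_n},\qquad u_1=\frac{\kappa^2u_0}{\epsilon^2}=\frac{R}{4a_n}.\]
Start with the piecewise function \(\psi_0\) equal to \(\kappa^2u\) on \([0,u_0]\), to the constant \(\kappa^2u_0\) on \([u_0,u_1]\), and to \(\epsilon^2u\) for \(u\geq u_1\). It is continuous and nondecreasing. Replace it by a smooth, strictly increasing \(\psi\) with the following properties:
\begin{itemize}
\item \(\psi=\kappa^2u\) on \([0,u_0]\);
\item \(\psi=\epsilon^2u\) for \(u\geq 2u_1\);
\item \(\psi\) is strictly increasing on \([u_0,2u_1]\).
\end{itemize}
On \([u_0,2u_1]\) the target values run from \(\kappa^2u_0\) up to \(2\epsilon^2u_1=2\kappa^2u_0\), so such an interpolation exists. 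Let \(f\) be the corresponding potential and set \(\tau_{\epsilon,\kappa}=\tfrac{i}{2}\partial\bar\partial f(|\bm{z}|^2)\).

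\textbf{Checking the statement.} On \(u\leq u_0\) we have \(f'=\kappa^2\), so \(\tau_{\epsilon,\kappa}=\kappa^2\omega_\mZ\) there. This region contains \(\mZ\!\left(\frac{R\epsilon^2a_1}{4\kappa^2a_n}\right)\), because that ellipsoid lies in the ball \(u\leq \frac{R\epsilon^2a_1}{4\kappa^2a_na_1}=u_0\). On \(u\geq 2u_1=R/(2a_n)\) we have \(f=\epsilon^2u+\mathrm{const}\), so \(\tau_{\epsilon,\kappa}=\epsilon^2\omega_\mZ\) there. This region contains a neighbourhood of \(\partial\mZ(R)\), since every point of \(\partial\mZ(R)\) satisfies \(u\geq R/a_n>R/(2a_n)\). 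Kählerness everywhere follows from the positivity criterion.

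The only step needing real care is that criterion, i.e. the eigenvalue computation for a radial \(\partial\bar\partial\)-potential. The other point one might expect to be an obstacle, that \(\int\psi/v\) cannot be made to match \(\epsilon^2u\) exactly, is dissolved by discarding the additive constant.
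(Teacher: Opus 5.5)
Your proof is correct and is essentially the paper's argument. The paper pulls back \(\epsilon^2\omega_\mZ\) along the McDuff--Polterovich monotone embedding \(\bm{z}\mapsto\frac{h(|\bm{z}|)}{|\bm{z}|}\bm{z}\), where \(h'>0\); this produces exactly a radial form \(\tfrac{i}{2}\partial\bar\partial f(|\bm{z}|^2)\) of your type, with \(\psi(u)=uf'(u)=\epsilon^2h(\sqrt{u})^2\), and the paper uses the same sandwiching of \(\mZ(r)\) between round balls to get the factor \(a_1/a_n\). Your eigenvalue computation makes self-contained the K\"{a}hler property that the paper takes from McDuff--Polterovich, but positivity needs \(\psi'>0\) everywhere (the analogue of \(h'>0\)), not merely \(\psi\) strictly increasing, and your interpolation can be chosen to satisfy this.
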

\begin{proof}
  The ellipsoid \(\mZ(R)\) is sandwiched between the spheres
  of radii \(\sqrt{R/a_n}\) (inner) and \(\sqrt{R/a_1}\)
  (outer). Consider a smooth function
  \(h\colon[0,\sqrt{R/a_1}]\to [0,\sqrt{R/a_1}]\) with
  positive derivative such that (see Figure
  \ref{fig:monotone_function}):
  \begin{itemize}
  \item \(h(t)=t\) for \(t>\sqrt{\frac{R}{2a_n}}\)
  \item \(h(t)=\kappa t/\epsilon\) for
    \(t<\frac{\epsilon}{\kappa}\sqrt{\frac{R}{3a_n}}\).
  \end{itemize}
  Define the monotone
  embedding (in the sense of McDuff and Polterovich
  {\cite[p.425]{McDuffPolterovich}})
  \(\bm{z}\mapsto \frac{h(|\bm{z}|)}{|\bm{z}|}\bm{z}\). The
  pullback of \(\epsilon^2\omega_\mZ\) along this monotone
  embedding is the desired K\"{a}hler form \(\tau_{\epsilon,\kappa}\) which
  coincides with:
  \begin{itemize}
  \item \(\epsilon^2\omega_\mZ\) on a spherical shell which contains
    \(\partial\mZ(R)\),
  \item \(\kappa^2\omega_\mZ\) on the ball of radius
    \(\frac{\epsilon}{\kappa}\sqrt{\frac{R}{3a_n}}\), which
    contains the ellipsoid
    \(\mZ\left(\frac{R\epsilon^2
        a_1}{4\kappa^2a_n}\right)\).\qedhere
  \end{itemize}
\end{proof}

\begin{figure}[htb]
  \centering
  \begin{tikzpicture}
    \draw (0,0) -- (3,0);
    \draw (0,0) -- (0,3);
    \draw[dashed] (2,0) -- (2,2) -- (0,2);
    \draw[dashed] (1,0) -- (1,1) -- (0,1);
    \draw[very thick] (0,0) -- (0.3,1) to[out=73.3,in=-135] (2,2) -- (3,3);
    \node at (1,0) [below] {\(\sqrt{\frac{R}{3a_n}}\)};
    \node at (2,0) [below] {\(\sqrt{\frac{R}{2a_n}}\)};
    \node (a) at (-1,0.5) {slope \(\frac{\kappa}{\epsilon}\)};
    \draw (a) -- (0.15,0.5);
    \node (b) at (-1,2.5) {slope \(1\)};
    \draw (b) -- (2.5,2.5);
  \end{tikzpicture}
  \caption{The strictly increasing function used in the proof of
  Lemma \ref{lma:monotone_function}.}\label{fig:monotone_function}
\end{figure}

\subsection{Ironing near the exceptional locus}

\begin{lemma}[Compare with Part 2 of the proof of {\cite[Lemma
    5.5.B]{McDuffPolterovich}}]\label{lma:ironing}
  Let \(\upsilon\) and \(\gamma\) be two cohomologous symplectic
  forms on the orbifold \(\mY(R)\) which tame its complex
  structure. Then there exist constants \(0<\epsilon<\kappa\)
  and a new symplectic form \(\upsilon'\) on \(\mY(R)\) with
  the following properties:
  \begin{itemize}
  \item[(1)] \(\upsilon\) and \(\upsilon'\) are cohomologous,
  \item[(2)] \(\upsilon'\) and \(\upsilon\) agree near
    \(\partial\mY(R)\),
  \item[(3)] on
    \(\mY\left(\frac{R\epsilon^2a_1}{36\kappa^2a_n}\right)\),
    \(\upsilon'\) coincides with
    \(\gamma+(\kappa^2-\epsilon^2)\mg^*\omega_\mZ\),
  \item[(4)] \(\upsilon'\) still tames the complex structure.
  \end{itemize}
\end{lemma}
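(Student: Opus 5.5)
The plan is to follow McDuff--Polterovich closely: write \(\upsilon=\gamma+d\beta\) and cut off the primitive \(\beta\) near the exceptional locus \(\mC\), so that \(\upsilon'\) equals \(\gamma\) plus a large multiple of the pullback of the concentrated form from Lemma \ref{lma:monotone_function} near \(\mC\). The extra term \(\mg^*(\tau_{\epsilon,\kappa}-\epsilon^2\omega_\mZ)\) is what restores tameness in the transition region.

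\textbf{Step 1: the primitive.} Since \(\upsilon\) and \(\gamma\) are cohomologous on \(\mY(R)\), write \(\upsilon-\gamma=d\beta\) for an orbifold \(1\)-form \(\beta\). The space \(\mY(R)\) deformation retracts onto \(\mC\cong\PP(\bm{a})\), so this uses orbifold de Rham theory. Fix \(0<r<1\) and a cutoff function \(\phi\) equal to \(1\) outside \(\mY(R/4)\) and \(0\) on \(\mY(R/9)\); the constant \(36=4\cdot 9\) suggests this choice of radii. Set
\[\upsilon'=\gamma+d(\phi\beta)+\mg^*\left(\tau_{\epsilon,\kappa}-\epsilon^2\omega_\mZ\right),\]
where \(\tau_{\epsilon,\kappa}\) is built on \(\mZ(R/9)\) by Lemma \ref{lma:monotone_function}, so that the last term is supported in \(\mY(R/9)\).

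\textbf{Step 2: the properties that are immediate.} Property (1) holds because \(\tau_{\epsilon,\kappa}-\epsilon^2\omega_\mZ\) is compactly supported and exact on a ball, via the monotone-embedding pullback. Property (2) holds because \(\phi=1\) and the correction vanishes near the boundary. Property (3) holds because on the inner ellipsoid \(\mY(\tfrac{R\epsilon^2 a_1}{36\kappa^2 a_n})\) we have \(\phi=0\) and \(\tau=\kappa^2\omega_\mZ\).

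\textbf{Step 3: tameness.} This is the main obstacle. Compute \(\upsilon'(v,Jv)\) in three regions.
\begin{itemize}
\item Outside \(\mY(R/4)\), the form is \(\upsilon\), which tames \(J\).
\item Inside \(\mY(R/9)\), the form is \(\gamma+\mg^*(\tau-\epsilon^2\omega)\). Here \(\gamma\) tames \(J\), and \(\mg^*\) of a Kähler form is semipositive because \(\mg\) is holomorphic. So this sum tames \(J\).
\item In the annulus \(\mY(R/4)\setminus\mY(R/9)\), which is compact and avoids \(\mC\), \(\mg\) is a biholomorphism onto its image and \(\upsilon'=\gamma+\phi\, d\beta+d\phi\wedge\beta\).
\end{itemize}

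The difficulty is the error term \(d\phi\wedge\beta\), together with possible non-convexity between \(\gamma\) and \(\upsilon\). A convex combination \(\phi\upsilon+(1-\phi)\gamma\) of taming forms is taming, so only \(d\phi\wedge\beta\) is problematic. My first attempt is McDuff--Polterovich's trick of rescaling. I would instead use a small multiple \(\epsilon^2\) of the whole construction in the annulus, which amounts to choosing \(\epsilon\) small. I would then add the positive term \(\mg^*\omega_\mZ\) on the annulus by letting the monotone function of Lemma \ref{lma:monotone_function} be non-identity already there.

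Concretely, I would modify the roles. Choose the region where \(h'\) is large, and hence \(\tau-\epsilon^2\omega\) dominates \(C\,\mg^*\omega_\mZ\), to cover the annulus. Then use compactness: on the annulus \(|d\phi\wedge\beta(v,Jv)|\le C|v|^2\), while \(\mg^*\omega_\mZ(v,Jv)\ge c|v|^2\). Choosing \(\kappa/\epsilon\) large makes the positive term dominate. If this interplay with the radii of Lemma \ref{lma:monotone_function} fails as stated, I would redo the lemma's monotone function with the cutoff radii adjusted accordingly. This is only a bookkeeping change, which produces the factor \(36\).
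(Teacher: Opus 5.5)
You use the same form as the paper, with \(\phi=1-\mg^*\rho\), and (1)--(3), including the factor \(36\), check out in your layout. The gap is in Step 3, in two places.

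First, inside \(\mY(R/9)\) you claim \(\mg^*(\tau_{\epsilon,\kappa}-\epsilon^2\omega_\mZ)\) is semipositive because it is the pullback of a K\"ahler form. It is not. \(\tau_{\epsilon,\kappa}-\epsilon^2\omega_\mZ\) is \(d\) of a compactly supported \(1\)-form, so it integrates to zero against \(\omega_\mZ^{n-1}\). Since it equals \((\kappa^2-\epsilon^2)\omega_\mZ>0\) near the origin, it must be negative somewhere, namely where \(h\) bends back from slope \(\kappa/\epsilon\) to slope \(1\). The fix is the paper's step (a): choose \(\epsilon\) so that a taming form minus \(\epsilon^2\mg^*\omega_\mZ\) still tames (in your layout, \(\gamma\) on the compact set \(\overline{\mY(R/9)}\)). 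Then regroup, so that the only term added is \(\mg^*\tau_{\epsilon,\kappa}\), which is semipositive because \(\tau_{\epsilon,\kappa}\) is K\"ahler and \(\mg\) is holomorphic.

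Second, and more seriously, on your fixed annulus \(\mY(R/4)\setminus\mY(R/9)\) the form is \(\phi\upsilon+(1-\phi)\gamma+d\phi\wedge\beta\), with nothing to absorb \(d\phi\wedge\beta\). Your remedy cannot work:
\begin{itemize}
\item Since \(h\) is increasing with \(h\le\sqrt{R/a_1}\), on \(\{|\bm{z}|\ge t_0\}\) the restriction of \(\tau_{\epsilon,\kappa}\) to the complex tangencies of the spheres is \(\epsilon^2(h(|\bm{z}|)/|\bm{z}|)^2\omega_\mZ\le\epsilon^2R\,a_1^{-1}t_0^{-2}\,\omega_\mZ\). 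This is bounded independently of \(\kappa\).
\item The identity \(\tau_{\epsilon,\kappa}=\kappa^2\omega_\mZ\) forces \(|\bm{z}|\le(\epsilon/\kappa)\sqrt{R/a_1}\). This is a volume constraint, not an artefact of the construction.
\end{itemize}
So the large positive term exists only on a region shrinking onto \(\mC\) at scale \(\epsilon/\kappa\), and the cutoff must be moved in there. The paper uses \(\rho_{\epsilon,\kappa}(\bm{z})=\rho\bigl(\tfrac{2\kappa}{\epsilon}\sqrt{a_n/a_1}\,\bm{z}\bigr)\). Its cutoff lies where \(\tau_{\epsilon,\kappa}=\kappa^2\omega_\mZ\), and the bending region of \(\tau_{\epsilon,\kappa}\) lies where the form is \((\upsilon-\epsilon^2\mg^*\omega_\mZ)+\mg^*\tau_{\epsilon,\kappa}\). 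This is the reverse of your layout, and it is where the paper's \(36\) comes from.

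This is not a bookkeeping change. Once the annulus depends on \(\kappa\), your compactness constants are no longer uniform: \(|d\phi|\) grows like \(\kappa/\epsilon\), and \(\mg^*\omega_\mZ(v,Jv)/|v|^2\to0\) in directions near \(T\mC\) as the annulus approaches \(\mC\). The paper instead compares an error of order \(\kappa/\epsilon\) with the gain \((\kappa^2-\epsilon^2)\mg^*\omega_\mZ\).

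Be aware that this comparison needs \(|(d(\mg^*\rho)\wedge\beta)(V,iV)|\le c\,|\mg_*V|^2\) with \(c\) independent of \(\kappa\) on shells converging to \(\mC\). Since \(\mg_*\) kills \(T\mC\), this requires \(\beta\) to vanish on \(T\mC\), hence \(\upsilon|_{\mC}=\gamma|_{\mC}\). The paper justifies the bound only by boundedness of the shell and \(\mg_*\neq0\) on it, which does not give uniformity in \(\kappa\). Whichever layout you use, that estimate is the real content of Step 3, and it must actually be supplied rather than deferred to compactness.
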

\begin{proof}
  Let:
  \begin{itemize}
  \item[(a)] \(\epsilon>0\) be such that
    \(\upsilon-\epsilon^2\mg^*\omega_\mZ\) still tames \(i\).
  \item[(b)] \(\beta\) be a primitive for \(\upsilon-\gamma\), that is
    \(\upsilon-\gamma=d\beta\).
  \item[(c)] \(\rho\) be a cut-off function on \(\mZ(R)\) such
    that \[\rho=\begin{cases}1&\text{ on }\mZ(R/9)\\ 0
        &\text{ on }\mZ(4R/9).\end{cases}\]
  \end{itemize}
  For any \(\kappa>\epsilon\),
  let
  \[\rho_{\epsilon,\kappa}(z) = \rho\left(\frac{2\kappa z}{\epsilon}
      \sqrt{\frac{a_n}{a_1}}\right).\] Recall that in the
  notation \(\mZ(R)\), the quantity \(R\) is the value of a
  quadratic Hamiltonian on \(\mZ=\CC^n\), and hence scales
  quadratically when we rescale on \(\mZ\). Since \(\rho\) is
  supported on \(\mZ(4R/9)\), and since \(\rho_{\epsilon,\kappa}\) is
  obtained from \(\rho\) by rescaling the argument
  by \((2\kappa/\epsilon)\sqrt{a_n/a_1}\),
  the cut-off function \(\rho_{\epsilon,\kappa}\) is supported on
  \(\mZ\left(\frac{R\epsilon^2a_1}{9\kappa^2a_n}\right)\)
  and is equal to \(1\) on
  \(\mZ\left(\frac{R\epsilon^2a_1}{36\kappa^2a_n}\right)\)

  Let
  \[\upsilon' = \upsilon - \epsilon^2\mg^*\omega_\mZ +
    \mg^*\tau_{\epsilon,\kappa} - d(\beta \mg^*\rho_{\epsilon,\kappa}),\] where
  \(\tau_{\epsilon,\kappa}\) is the symplectic form on \(\mZ\)
  constructed in Lemma \ref{lma:monotone_function}. This form
  satisfies the conclusions of the lemma:
  \begin{itemize}
  \item[(1)] \(\upsilon'\) and \(\upsilon\) differ by an exact form
    (note that since \(H^2(\mZ;\RR)=0\), anything pulled back
    along \(\mg\) is exact), so they are cohomologous.
  \item[(2)] Near \(\partial\mY(R)\), we have
    \(\mg^*\tau_{\epsilon,\kappa}=\epsilon^2\mg^*\omega_\mZ\) and
    \(\mg^*\rho_{\epsilon,\kappa}\equiv 0\), so \(\upsilon'=\upsilon\).
  \item[(3)] On
    \(\mY\left(\frac{R\epsilon^2a_1}{36\kappa^2a_n}\right)\),
    we have \(\tau_{\epsilon,\kappa}=\kappa^2\mg^*\omega_\mZ\) and
    \(\rho_{\epsilon,\kappa}\equiv 1\) so
    \[\upsilon' = \upsilon + (\kappa^2 - \epsilon^2)\mg^*\omega_\mZ +
      d\beta = \gamma + (\kappa^2 - \epsilon^2)\mg^*\omega_\mZ.\]
  \item[(4)] On
    \(\mY(R) \setminus
    \mY\left(\frac{R\epsilon^2a_1}{9\kappa^2a_n}\right)\) we
    have \(\rho_{\epsilon,\kappa}=0\) so
    \(\upsilon' = \left(\upsilon -
      \epsilon^2\mg^*\omega_\mZ\right) +
    \mg^*\tau_{\epsilon,\kappa}\), which is a sum of two taming
    forms (thanks to our choice of \(\epsilon\); see (a)) and so
    tame. On
    \(\mY\left(\frac{R\epsilon^2a_1}{36\kappa^2a_n}\right)\) we
    have seen that
    \(\upsilon'=\gamma+(\kappa^2-\epsilon^2)\mg^*\omega_\mZ\),
    which is tame. The shell
    \(\mY\left(\frac{R\epsilon^2a_1}{9\kappa^2a_n}\right)\setminus
    \mY\left(\frac{R\epsilon^2a_1}{36\kappa^2a_n}\right)\) on
    which it remains for us to check tameness is contained in
    \(\mY\left(\frac{R\epsilon^2a_1}{4\kappa^2a_n}\right)\), so
    we certainly have
    \(\mg^*\tau_{\epsilon,\kappa}\equiv
    \kappa^2\mg^*\omega_\mZ\). Therefore, for any vector \(V\),
    \begin{align*}
      \upsilon'(V,iV)&=(1-\rho_{\epsilon,\kappa})\upsilon(V,iV) +
                     (\kappa^2-\epsilon^2)\mg^*\omega_Z(V,iV) +\\
                   & \qquad +
                     \rho_{\epsilon,\kappa}\gamma(V,iV) +
                     (d(\mg^*\rho_{\epsilon,\kappa})\wedge\beta)(V,iV)\\
                   &\geq
                     \left(\kappa^2-\epsilon^2\right)|\mg_*V|^2 +
                     \frac{2\kappa}{\epsilon}\sqrt{\frac{a_n}{a_1}}
                     (d(\mg^*\rho)\wedge\beta)(V,iV)
    \end{align*}
    since \(0\leq \rho_{\epsilon,\kappa}\leq 1\), and
    \(\upsilon\) and \(\gamma\) tame \(i\). Now
    \(d(\mg^*\rho)\wedge\beta(V,iV)\leq c|\mg_*V|^2\) for some
    constant \(c\) independent of \(\kappa\), since
    \(\mY\left(\frac{R\epsilon^2a_1}{9\kappa^2a_n}\right)
    \setminus
    \mY\left(\frac{R\epsilon^2a_1}{36\kappa^2a_n}\right)\) is
    bounded and since \(\mg_*\neq 0\) on this subset. Therefore
    \[\upsilon'(V,iV)\geq \left(\kappa^2-\epsilon^2-\frac{2\kappa
          c}{\epsilon}\sqrt{\frac{a_n}{a_1}}\right)|\mg_*V|^2,\]
    which is positive provided \(\kappa\) is chosen large
    enough.\qedhere
  \end{itemize}
\end{proof}

\section{K\"{a}hler forms}
\label{sct:kaehler}
\subsection{Review of Guillemin--Abreu theory}

We will make extensive use of the Guillemin--Abreu theory of
toric K\"{a}hler geometry. We briefly recap what we need from
this theory; the reader can learn more from the original paper
by Guillemin \cite{Guillemin} or the subsequent work by Abreu
\cite{Abreu2,Abreu1} which also treats the orbifold
setting. Recall that a K\"{a}hler structure on a manifold \(X\)
comprises a complex structure \(J\) and a symplectic form
\(\omega\) which are compatible in the sense that, at every
point \(p\),
\[\omega(Jv,Jw)=\omega(v,w)\] for all tangent vectors
\(v,w \in T_pX\) and \(\omega(v,Jv) > 0\) for all nonzero
tangent vectors \(v\) (this second condition is called {\em
  tameness}: \(\omega\) tames \(J\)). If \(X\) is an orbifold
locally modelled on a quotient \(\CC^n/\Gamma\) by the action of
a finite group \(\Gamma\subset GL(n,\CC)\) then we can still
talk about K\"{a}hler structures: they just come from
\(\Gamma\)-invariant K\"{a}hler structures on \(\CC^n\).

Let \(\ell_1,\ldots,\ell_m\colon\RR^n\to\RR\) be linear maps and
\(R_1,\ldots,R_m\) be real numbers. The linear
inequalities \(\ell_i(\bmm)\geq R_i/2\) cut out a polytope
\[\Polytope=\left\{\bmm\in\RR^n\,:\,
    \ell_i(\bmm)\geq R_i/2\text{ for }i=1,\ldots,m\right\}.\] Given
this polytope, we can construct:
\begin{itemize}
\item the normal toric variety \(\cX\) (a complex orbifold)
  associated with the inward normal fan to \(\Polytope\). This
  admits a holomorphic \((\CC^\times)^n\)-action which has a
  free Zariski-open orbit. We identify this Zariski-open torus
  with \(\CC^n/(2\pi\ZZ)^n\) via the exponential map, writing
  complex coordinates \(\blr +i\bm{\theta}\) with
  \(\blr =(\lr_1,\ldots,\lr_n)\in\RR^n\) and
  \(\bm{\theta}=(\theta_1,\ldots,\theta_n)\in\RR^n/(2\pi
  \ZZ)^n\), so that the action of
  \(\bm{t}=(t_1,\ldots,t_n)\in T^n\subseteq (\CC^\times)^n\) is
  given by \(\blr +i(\bm{\theta}+\bm{t})\). We refer to these as
  {\em complex coordinates}. The notation \(\bm{\lambda}\) is
  intended to evoke ``log-radius''.
\item a toric symplectic orbifold \((\sX,\omega)\) together with
  a moment map \(\bmm\colon \sX\to \Polytope\). This construction is
  due to Lerman and Tolman \cite{LermanTolman}, following
  Delzant's construction in the case of manifolds
  \cite{Delzant}. This admits a Hamiltonian action of the torus
  \(T^n\) which is free over the interior
  \(\Polytope^\circ\subseteq\Polytope\) of the moment polytope. The
  \(\bmm\)-preimage of \(\Polytope^\circ\) is symplectomorphic to
  \(\Polytope^\circ\times T^n\) equipped with the symplectic form
  \(\omega=\sum_{j=1}^n d\mm_i\wedge d\theta_i\) where
  \((\mm_1,\ldots,\mm_n)\) are linear coordinates on \(\RR^n\) and
  \((\theta_1,\ldots,\theta_n)\in T^n=\RR^n/(2\pi\ZZ)^n\) are
  angle coordinates such that the circle action generated by
  \(\mm_j\) rotates \(\theta_j\) at unit speed. We refer to these
  as {\em symplectic} (or {\em action-angle}) {\em coordinates}.
\end{itemize}
The orbifolds \(\cX\) and \(\sX\) are diffeomorphic, and the
Guillemin--Abreu theory tells us how to combine the symplectic
and complex geometry to obtain a (torus-invariant) K\"{a}hler structure on either
side. There are many torus-invariant K\"{a}hler structures, and
choosing one is equivalent to choosing a {\em symplectic
  potential}:

\begin{definition}[Symplectic potential]
  A symplectic potential is a strictly convex function
  \(G\colon \Polytope\to\RR\) of the form
  \[G(\bmm)=\sum_{i=1}^mL(\ell_i(\bmm)-R_i/2)+h(\bmm)\]
  where:
  \begin{itemize}
  \item \(L(t)=\frac{1}{2}(t\log(2t)-t)\).
  \item \(h(\bmm)\) is a smooth function on \(\Polytope\).
  \item \(\left(\det\Hess(G)\right) \cdot
    \prod_{i=1}^m(\ell_i(\bmm)-R_i/2)\) is strictly positive
    along \(\partial \Polytope\).
  \end{itemize}
\end{definition}

\begin{remark}
  Guillemin and Abreu use the function \(\frac{1}{2}t\log t\)
  instead of our \(L\). These functions differ only by a linear
  term, which can be absorbed into \(h\). This is also
  responsible for the factor of \(1/2\) in \(R_i/2\) which will
  pervade our equations. The advantage of our choice is that it
  yields the standard symplectic form and moment map on
  \(\CC^n\).
\end{remark}

\begin{construction}
  We use a symplectic potential to define a torus-invariant
  complex structure on \(\sX\) compatible with \(\omega\). In
  terms of the symplectic coordinates \((\bmm,\bm{\theta})\) on
  the open set \(\bmm^{-1}(P^\circ)\), the complex structure is
  given by the block matrix
  \[J = \begin{pmatrix}0 & -\OP{Hess}(G)^{-1}\\ \OP{Hess}(G) &
      0\end{pmatrix}.\] Integrability of \(J\) is equivalent to
  the fact that the off-diagonal blocks are Hessian matrices for
  some function \(G\). Abreu {\cite[Appendix A]{Abreu1}}
  explains why this complex structure extends over the toric
  boundary. The complex orbifold \((\sX,J)\) is biholomorphic to
  \((\cX,i)\) via an explicit biholomorphism
  \(\Psi\colon \sX\to \cX\). In symplectic and complex
  coordinates, \(\Psi\) can be written as:
  \[\Psi(\bmm+i\bm{\theta})=\psi(\bmm)+i\bm{\theta},\qquad
    \psi(\bmm) = \frac{\partial G}{\partial \bmm}.\] Moreover,
  the pullback \((\Psi^{-1})^*\omega\) on
  \(\bmm^{-1}(P^\circ)\subseteq \cX\) can be written as
  \(2i\partial\bar{\partial}F\) for a {\em K\"{a}hler} potential
  \[F(\blr +i\bm{\theta})=\blr
    \cdot\psi^{-1}(\blr)-G(\psi^{-1}(\blr))\] In other words,
  \(F\) is the {\em Legendre transform} of \(G\) and \(\lr_j\)
  is the conjugate variable to \(\mm_j\). The moment map for the
  torus action is given by projection to \(\bmm\) in symplectic
  coordinates and by
  \(\psi^{-1}(\blr)=\frac{\partial F}{\partial\blr}\) in complex
  coordinates; that is, the moment map is given by the conjugate
  variables to \(\blr\).
\end{construction}

\begin{remark}
  Changing the symplectic potential by a linear term doesn't
  affect the Hessian; it does affect the K\"{a}hler potential
  and the moment map.
\end{remark}

\subsection{Standard symplectic vector space}

\begin{example}\label{exm:Cn}
  Let
  \(\Polytope_\mZ=\RR_{\geq 0}^n\), that is \(\ell_j(\bmm)=\mm_j\) and
  \(R_j=0\) for \(j=1,\ldots,n\). The associated complex
  manifold is \(\mZ=\CC^n\). Take the symplectic potential
  \(G_\mZ=\sum_{j=1}^nL(\mm_j)\). The conjugate variables and
  Legendre transform are given by
  \begin{gather*}
    \blr = \psi_\mZ(\bmm)= \frac{\partial G_\mZ}{\partial \bmm} =
    \frac{1}{2}\log(2\bmm),\qquad \bmm=\frac{1}{2}e^{2\blr}\\
    F_\mZ =
    \blr\cdot\bmm-G_\mZ(\bmm)=\frac{1}{4}\sum_{j=1}^ne^{2\lr_j}.
  \end{gather*}
  If we write \(\bm{z}=e^{\blr+i\bm{\theta}}\) then
  \(F_\mZ(\bm{z})=\frac{1}{4}\sum_{j=1}^n|z_j|^2\) and
  \(\omega_\mZ=2i\partial\bar{\partial}
  F=\frac{i}{2}\sum_{j=1}^n dz_j\wedge d\bar{z}_j\) is the
  standard symplectic form. The moment map is
  \(\bmm=\left(\frac{1}{2}|z_1|^2,\ldots,\frac{1}{2}|z_n|^2\right)\).

  Given a vector \(\bm{a}\) and a constant \(R>0\), define the subsets
  \[\trifil_{\bm{a},\,R}=\left\{\bmm\in\RR^n\,:
      \,\bm{a}\cdot\bmm\leq
      \frac{R}{2}\right\},\qquad \tri_{\bm{a},\,R}=\left\{\bmm\in\RR^n\,:
      \,\bm{a}\cdot\bmm=
      \frac{R}{2}\right\}.\] Note that \(\tri_{\bm{a},\,R}=\tri_{\bm{a}/R,\,1}\). The ellipsoid
  \[\Ell(R\bm{a}^{-1})=\left\{\bm{z}\,:\,
      \sum_{j=1}^na_j|z_j|^2\leq R\right\}\]
  projects under the moment map \(\bmm\) to the subset
  \(\trifil_{\bm{a},\,R}\).

  In conjugate coordinates, we introduce the notation:
  \[\mem_{\bm{a},\,R}= \left\{\blr\in\RR^n\,:\,
      \sum_{j=1}^na_je^{2\lr_j}\leq R\right\},\quad
    \resh_{\bm{a},\,R} = \left\{\blr\in\RR^n\,:\,
      \sum_{j=1}^na_je^{2\lr_j}=R\right\},\] so that
  \(\mem_{\bm{a},\,R}=\psi_\mZ(\trifil_{\bm{a},\,R})\) and
  \(\resh_{\bm{a},\,R}=\psi_\mZ(\tri_{\bm{a},\,R})\). This means
  that \(\blr(\Ell(R\bm{a}^{-1}))=\mem_{\bm{a},\,R}\)
  and
  \(\blr(\partial\Ell(R\bm{a}^{-1}))=\resh_{\bm{a},\,R}\).

  The choice of notation\footnote{The Hebrew letters \(\resh\)
    (resh) and \(\mem\) (mem).} reflects the shape of the
  subsets when \(n=2\); see Figure \ref{fig:resh_and_mem}. Note
  that \(\resh_{\bm{a},\,R}\) is a translation of
  \(\resh_{\bm{1},\,1}\): if we write \(\bm{1}\) for the
  vector \((1,\ldots,1)\) then
  \begin{equation}\label{eq:resh_translate}\resh_{\bm{a},\,R} =
    \resh_{\bm{1},\,1}-\frac{1}{2}\log\left(\bm{a}\right)
    +\frac{1}{2}\log(R)\bm{1}.\end{equation}
\end{example}

\begin{figure}[htb]
  \centering
  \begin{tikzpicture}
    \draw (0,0) -- (0,2.5);
    \draw (0,0) -- (3.5,0);
    \fill[pattern=north west lines] (0,2) -- (3,0) -- (0,0) -- cycle;
    \draw[very thick] (0,2) -- (3,0) node [pos=0.9,above=2mm] {\(\tri_{\bm{a},\,R}\)};
    \draw[-{Stealth[length=3mm, width=2mm]}] (1.5,1) -- ++ (2,3) node [right] {\(\bm{a}\)};
    \node (nd) at (1,2.5) {\(\trifil_{\bm{a},\,R}\)};
    \draw[->] (nd) -- (1,1);
    \begin{scope}[shift={(8,2)}]
      \draw (0,-2) -- (0,2);
      \draw (-2,0) -- (3,0);
      \begin{scope}[shift={(1.791759469,1.386294361)}]
        \draw[very
        thick,variable=\x,samples=200,domain=-3.791759:-0.0000001] plot
        ({\x},{0.5*ln(1-exp(2*\x))}) -- (0,-3.38629);
        \node (resh) at (1,-0.3) {\(\resh_{\bm{a},\,R}\)};
        \draw[->] (resh) -- (-0.3,-0.3);
        \node (mem) at (-1.5,1) {\(\mem_{\bm{a},\,R}\)};
        \draw[->] (mem) -- (-1.5,-1.5);
        \begin{scope}
          \path[clip] plot[samples=200,domain=-3.791759:-0.0000001]
          ({\x},{0.5*ln(1-exp(2*\x))}) -- (0,-3.38629) -- (-3.791759,-3.38629) -- cycle;
          \fill[pattern=north west lines] (-4,0) -- (0,0) -- (0,-4) -- (-4,-4) -- cycle;
        \end{scope}
      \end{scope}
    \end{scope}
  \end{tikzpicture}
  \caption{The projections of the ellipsoid
    \(\Ell(R\bm{a}^{-1})\subseteq\CC^2\) to \(\bmm\) (left)
    and \(\blr\) (right) from Example \ref{exm:Cn}. In this
    example, \(\bm{a}=(2,3)\), \(R=12\) and
    \(\bm{\sigma}=(6,4)\).}\label{fig:resh_and_mem}
\end{figure}

\subsection{Standard model of toric weighted blow-up}

\begin{example}
  As in Remark \ref{rmk:polytope}, let
  \[\Polytope_\mY(R_0)=\{\bmm=(\mm_1,\ldots,\mm_n)\in\Polytope_\mZ\,:\,
    \ell_0(\bmm)\geq R_0/2\},\] where
  \(\ell_0(\bmm)=\bm{a}\cdot\bmm\) and \(R_0>0\) is a positive
  constant; this is obtained from the orthant \(\Polytope_\mZ\)
  by chopping off a corner, which introduces a new face
  \(\tri_{\bm{a},\,R_0}\) with inward normal \(\bm{a}\). The
  associated complex orbifold \(\mY\) is the toric weighted
  blow-up of \(\mZ\) at the origin with weights
  \(\bm{a}\). Write \(\mg\colon\mY\to\mZ\) for the weighted
  blow-up; the exceptional locus \(\mC\) of \(\mg\) is a
  weighted projective space \(\PP(a_1,\ldots,a_n)\). The
  symplectic orbifold associated to \(\Polytope_\mY(R_0)\) also
  contains \(\mC\); the moment image of \(\mC\) is the new face
  \(\tri_{\bm{a},\,R_0}\).

  Choose a vector \(\bm{b}\in\RR^n\) and take the symplectic
  potential
  \[G^{R_0,\bm{b}}_\mY(\bmm)=\sum_{j=1}^nL(\mm_j) +
    L(\ell_0(\bmm) - R_0/2) + \bm{b}\cdot\bmm\] then
  we obtain a K\"{a}hler form
  \(\omega^{R_0,\bm{b}}_\mY\) on \(\mY\). The conjugate
  variables are given by
  \[\blr=\psi^{R_0,\bm{b}}_\mY(\bmm)=\frac{\partial G^{R_0,\bm{b}}_\mY}{\partial\bmm} =
    \frac{1}{2}\log(2\bmm)+ \frac{1}{2}\log(2\ell_0(\bmm) -
    R_0)\bm{a}+\bm{b}.\] To invert this and find
  \(\bmm\) in terms of \(\blr\), note that
  \begin{equation}\label{eq:conjugate_variables} \mm_j =
    \frac{1}{2}\cdot\frac{\exp(2(\lr_j-b_j))}{(2\ell_0(\bmm)-R_0)^{a_j}}
  \end{equation}
  and since \(\ell_0(\bmm)=\bm{a}\cdot\bmm\), we get
  \[\ell_0(\bmm) =
    \frac{1}{2}\sum_{j=1}^n\frac{a_j\exp(2(\lr_j -
      b_j))}{(2\ell_0(\bmm) - R_0)^{a_j}}.\] Clearing
  denominators, this gives a polynomial equation for the value
  of \(\ell_0(\bmm)\) in terms of \(\blr\). Assuming this is
  solved for \(\ell_0(\bmm)\), then Equation
  \eqref{eq:conjugate_variables} gives a formula for the
  conjugate variables \(\bmm\) in terms of
  \(\ell_0(\bmm)\). Whilst it seems infeasible to solve this
  polynomial explicitly, it is nonetheless easy to use Equations
  \eqref{eq:resh_translate} and \eqref{eq:conjugate_variables}
  to find the image under \(\psi^{R_0,\bm{b}}_\mY\) of
  \(\tri_{\bm{a},\,R}=\{\bmm\,:\,\ell_0(\bmm)=R/2\}\) for any
  given value of \(R>R_0\):
  \[\psi^{R_0,\bm{b}}_\mY(\tri_{\bm{a},\,R})=\resh_{\bm{a},\,R} +
    \bm{b}+\frac{1}{2}\bm{a}\log(R-R_0).\] Fix an \(\Rb>R_0\)
  and recall that \(\mY(\Rb)=\mg^{-1}(\mZ(\Rb))\).
\end{example}

\begin{definition}
  Define \(G^{R_0}_{\mY(\Rb)}\) to be the symplectic potential
  on \(\Polytope_\mY(R_0)\) given
  by
  \[G^{R_0,\bm{b}}_{\mY}\qquad \text{with} \qquad\bm{b} =
    -\frac{1}{2}\bm{a}\log(\Rb-R_0).\] Writing
  \begin{equation}
    \label{eq:leg_trans}
    \psi_{\mY(\Rb)}^{R_0}(\bmm)\coloneqq \psi_{\mY}^{R_0,\bm{b}}(\bmm)
    = \frac{1}{2}\log(2\bmm) -
    \frac{1}{2}\bm{a}\log\left(\frac{\Rb - R_0}{2\ell_0(\bmm) -
        R_0}\right),
  \end{equation}
  we have
  \(\psi^{R_0}_{\mY(\Rb)}(\tri_{\bm{a},\,\Rb}) =
  \resh_{\bm{a},\,\Rb}\). We will write
  \(\omega^{R_0}_{\mY(\Rb)}\) for the resulting K\"{a}hler form
  on \(\mY\) (indeed, we will usually restrict attention to the
  domain \(\mY(\Rb)\subset\mY\) whose boundary
  \(\partial\mY_{\Rb}\) projects via \(\blr\) to
  \(\resh_{\bm{a},\,\Rb}\)).
\end{definition}

\begin{remark}
  Note that \(\mY(R)\) is also defined for \(R\leq
  R_0\). However, if we choose some \(\Rb>R_0\) and equip
  \(\mY(\Rb)\) with the K\"{a}hler form
  \(\omega^{R_0}_{\mY(\Rb)}\) then we see that the image of
  \(\mY(\Rb)\) under the moment map is
  \(\Polytope_\mY(R_0)\cap\trifil_{\bm{a},\,\Rb}\), so that
  \(\mY(\Rb)\) is the result of symplectically cutting an
  ellipsoid \(\Ell(R_0\bm{a}^{-1})\) out of
  \(\Ell(\Rb\bm{a}^{-1})\). The \(\psi^{R_0}_{\mY(\Rb)}\)-images
  of the parallel lines \(\tri_{\bm{a},\,R}\) with \(R_0<R<\Rb\)
  are then
  \begin{equation}\label{eq:psi_images}
    \psi^{R_0}_{\mY(\Rb)}(\tri_{\bm{a},\,R})= \resh_{\bm{a},\,\Rb} -
    \frac{1}{2}\bm{a}\log\left(\frac{\Rb - R_0}{R -
      R_0}\right),\end{equation} which fill out the whole of
  \(\mem_{\bm{a},\,\Rb}\); see Figure \ref{fig:leg_transf_R_0}. I
  highlight this because the main difference between complex and
  symplectic (weighted) blow-up is that in the symplectic case one
  usually thinks of ``cutting out a ball'' whereas in the complex case
  one ``cuts out a point''.  This example shows how to equip the ball
  punctured at a point with a symplectic form in such a way that it
  appears that a whole ball has been excised.
\end{remark}

\begin{figure}[htb]
  \centering
  \begin{tikzpicture}
    \draw (0,3) -- (0,1.5) -- (3,0) -- (6,0);
    \draw[very thick] (0,1.5+1) -- (3+2,0);
    \foreach \z in {1,2,...,20} {
      \draw[dotted] (0,{1.5+1/\z}) -- ({3+2/\z},0);
    }
    \draw[->] (5,1.5) -- (6.5,1.5) node [midway,above] {\(\psi^{R_0}_{\mY(\Rb)}\)};
    \node at (2.5,2) {\(\tri_{\bm{a},\,\Rb}\)};
    \node at (1,0.5) {\(\tri_{\bm{a},\,R_0}\)};
    \begin{scope}[shift={(12,3)}]
      \node at (0.5,-1.5) {\(\resh_{\bm{a},\,\Rb}\)};
      \draw[very thick,variable=\x,samples=200,domain=-3.791759:-0.0000001] plot ({\x},{0.5*ln(1-exp(2*\x))}) -- (0,-3.38629);
      \foreach \y in {2,...,20} {
        \begin{scope}[shift={({-0.25*ln(\y)},{-0.5*ln(\y)})}]
          \draw[dotted,variable=\x,samples=200,domain=-3.791759:-0.0000001] plot ({\x},{0.5*ln(1-exp(2*\x))}) -- (0,-3.38629);
        \end{scope}
      }
    \end{scope}
  \end{tikzpicture}
  \caption{The Legendre transform \(\psi^{R_0}_{\mY(\Rb)}\) sends \(\tri_{\bm{a},\,R}\) in the moment polytope to a translate of \(\resh_{\bm{a},\,\Rb}\). The thickly draw level sets show the boundary of \(\mY(\Rb)\); the dotted level sets in the moment polytope accumulate at \(\tri_{\bm{a},\,R_0}\); their images under do not accumulate, but rather escape logarithmically slowly to the bottom left.}\label{fig:leg_transf_R_0}
\end{figure}

\subsection{Implanting a weighted blow-up}

For the purposes of performing the weighted blow-up, we need a
symplectic form on \(\mY\) which coincides with the flat
K\"{a}hler form \(\mg^*\omega_\mZ\) outside \(\mY(R)\) for
sufficiently large \(R\). In fact, we will only need the
construction in the other direction (blowing down), but we
explain weighted blowing up for completeness.

\begin{definition}\label{dfn:chi_delta}
  Given \(\delta>0\), let \(\chi_\delta\) be the convex
  \(C^1\) function defined piecewise by
  \begin{equation}\label{eq:chi_delta}
    \chi_\delta(t)=\begin{cases}\frac{1}{2}(t\log(t/\delta)+\delta-t)&\text{
        if }t\leq \delta\\ 0&\text{ if }t\geq
      \delta. \end{cases}
  \end{equation} Given \(0<\eta<\delta\), let
  \(\chi_{\delta,\eta}\) be the convex smoothing of
  \(\chi_\delta\) given by the construction of Ghomi
  {\cite[Section 2]{Ghomi}}. This is a convex \(C^\infty\)
  function which coincides with \(\chi_\delta\) outside the
  interval \((\delta-\eta,\delta+\eta)\). Note that if
  \(t\leq\delta\) then
  \(\chi_\delta(t)=L(t)+\frac{1}{2}(\delta-t\log(2\delta))\).
\end{definition}

\begin{figure}[htb]
  \centering
  \begin{tikzpicture}
    \fill[lightgray] (0.8,1) -- (0.8,0) -- (1.2,0) -- (1.2,1) -- cycle;
    \draw[dotted] (1,0) -- (1,1);
    \draw[<->] (0.8,1) -- (1.2,1) node [midway,above] {\(2\eta\)};
    \draw (0,0) -- (0,1) node [left] {\(\chi_\delta(t)\)};
    \draw (0,0) -- (3,0) node [below] {\(t\)};
    \node at (1,0) [below] {\(\delta\)};
    \draw[very thick,variable=\x,smooth,domain=0.0001:1] plot (\x,{0.5*(\x*ln(\x/1)+1-\x)});
    \draw[very thick] (1,0) -- (3,0);
  \end{tikzpicture}
  \caption{The function \(\chi_\delta(t)\) defined in Equation
    \eqref{eq:chi_delta}. Its smoothing looks almost identical
    but differs slightly on the shaded strip.}\label{fig:chi_delta}
\end{figure}

\begin{example}
  Given \(R_0>0\) and \(0<\eta<\delta\), consider the
  symplectic potential
  \[G_{\mY}^{\ironed}=\sum_{j=1}^nL(\mm_j) +
    \chi_{\delta,\eta}(\ell_0(\bmm)-R_0/2)\] on
  \(\Polytope_\mY(R_0)\) and let \(\omega^{\ironed}_{\mY}\) be
  the associated K\"{a}hler form on \(\mY\). See Figure
  \ref{fig:G_ironed_Y} for an illustration of
  \(G^{\ironed}_{\mY}\). This symplectic potential coincides
  with:
  \begin{itemize}
  \item \(G^{R_0,\bm{0}}_{\mY}\) on
    \(\Polytope_\mY(R_0)\cap\{\ell_0\leq R_0+2(\delta-\eta)\}\),
    in other words on a neighbourhood of the face
    \(\tri_{\bm{a},\,R_0}\), and
  \item with \(G_\mZ\) on
    \(\Polytope_\mY(R_0)\cap\{\ell_0\geq R_0+2(\delta+\eta)\}\).
  \end{itemize}
  As a consequence, the diffeomorphism
  \(\psi^{\ironed}_\mY(\bmm)=\frac{\partial
    G^{\ironed}_{\mY}}{\partial\bmm}\) coincides with
  \(\psi_\mZ\) on
  \(\Polytope_\mY(R_0)\cap\{\ell_0\geq R_0+2(\delta+\eta)\}\), which
  means that \(\omega_{\mY}^{\ironed}\) agrees with
  \(\mg^*\omega_\mZ\) on
  \(\mY\setminus\mY(R_0+2(\delta+\eta))\).
\end{example}

\begin{lemma}
  Given a complex manifold \(Z\) and a weighted blow up
  
  \begin{center}
    \begin{tikzpicture}
      \node (V) at (0,0) {\(\mY(R)\)};
      \node (W) at (0,-2) {\(\mZ(R)\)};
      \node (Y) at (2,0) {\(Y\)};
      \node (Z) at (2,-2) {\(Z\)};
      \draw[->] (V) -- (Y) node [midway,above] {\(\jmath\)};
      \draw[->] (V) -- (W) node [midway,left] {\(\mg\)};
      \draw[->] (W) -- (Z) node [midway,above] {\(\iota\)};
      \draw[->] (Y) -- (Z) node [midway,right] {\(g\)};
    \end{tikzpicture}
  \end{center}

  with weights \(\bm{a}\), suppose that \(Z\) admits a tame
  symplectic form \(\omega\) such that
  \(\iota^*\omega=\omega_\mZ\). Then, for any \(R_0<R\), there
  exists a symplectic form on \(Y\) taming the complex structure
  and making \(C=\exc(g)\) symplectomorphic to the toric
  orbifold with moment image \(\tri_{\bm{a},\,R_0}\).
\end{lemma}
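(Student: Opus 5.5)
The plan is to build the form on \(Y\) by gluing: use \(g^*\omega\) away from the exceptional locus, and the ironed toric form \(\omega^{\ironed}_\mY\) of the preceding example on the local chart \(\jmath(\mY(R))\). Since \(g\) is an isomorphism off \(C\), the form \(g^*\omega\) is a genuine taming symplectic form on \(Y\setminus C\). On the chart, \(\jmath^*g^*\omega=\mg^*\iota^*\omega=\mg^*\omega_\mZ\) by commutativity of the diagram. This is degenerate along \(\mC\), so it is exactly what has to be replaced.

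First choose the parameters. Given \(R_0<R\), pick \(0<\eta<\delta\) so small that \(R_0+2(\delta+\eta)<R\); shrinking further if needed leaves a collar \(\mY(R)\setminus\mY(R')\) with \(R_0+2(\delta+\eta)<R'<R\). Take the symplectic potential \(G^{\ironed}_\mY=\sum_jL(\mm_j)+\chi_{\delta,\eta}(\ell_0(\bmm)-R_0/2)\) on \(\Polytope_\mY(R_0)\). It is strictly convex, because it is a sum of the strictly convex \(\sum_j L(\mm_j)\) and a convex function of a linear form. It has the correct boundary behaviour, because near the face \(\tri_{\bm{a},\,R_0}\) it agrees with \(G^{R_0,\bm{0}}_\mY\), and elsewhere it agrees with \(\sum_jL(\mm_j)\), which has the standard behaviour on the coordinate faces. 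Hence, by Guillemin--Abreu (and Abreu's orbifold extension), it defines an orbifold K\"{a}hler form \(\omega^{\ironed}_\mY\) on \(\mY\) that is compatible with the standard toric complex structure. Pulling back through the explicit biholomorphism \(\Psi\) makes it K\"{a}hler for the complex structure \(i\) on \(\mY\).

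Next, glue. By the preceding example, \(\omega^{\ironed}_\mY=\mg^*\omega_\mZ\) on \(\mY\setminus\mY(R_0+2(\delta+\eta))\), and in particular on the collar near \(\partial\mY(R)\). Define \(\omega_Y\) to be \((\jmath^{-1})^*\omega^{\ironed}_\mY\) on \(\jmath(\mY(R))\) and \(g^*\omega\) on \(Y\setminus\jmath(\mY(R'))\). The two definitions agree on the overlap, so \(\omega_Y\) is a closed form that is symplectic and tames the complex structure everywhere: on the chart it is K\"{a}hler, and off the chart it is the pullback of a taming form by a biholomorphism. The exceptional divisor \(C=\jmath(\mC)\) is the preimage of the face \(\tri_{\bm{a},\,R_0}\) under the moment map of \(\omega^{\ironed}_\mY\). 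Near that face the potential equals \(G^{R_0,\bm{0}}_\mY\), so \((C,\omega_Y|_C)\) is the toric symplectic orbifold with moment polytope \(\tri_{\bm{a},\,R_0}\). This follows from the standard fact that the preimage of a face, with its restricted form, is the toric orbifold of that face (Lerman--Tolman).

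The main obstacle is making precise that the moment-map level sets match the ellipsoid regions \(\mY(R')\). Concretely, the claim that \(\omega^{\ironed}_\mY\) equals \(\mg^*\omega_\mZ\) outside \(\mY(R_0+2(\delta+\eta))\) needs \(\psi^{\ironed}_\mY=\psi_\mZ\) on the region \(\ell_0\geq R_0/2+\delta+\eta\). It also needs that this region is exactly the \(\mu\)-image of the complement of an ellipsoid neighbourhood in \(\mZ\)-coordinates. Both follow because the potentials coincide there and the standard moment map of \(\CC^n\) sends \(\mZ(R)\) to \(\trifil_{\bm{a},\,R}\). One must still check that the Legendre transform of the ironed potential is a diffeomorphism onto all of \(\RR^n\), so that the complex-side region it covers is the full punctured neighbourhood. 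This holds by strict convexity together with the logarithmic blow-up of the gradient at every face. The remaining care is bookkeeping of the factor \(1/2\) between \(R\) and \(\ell_0\), which only requires choosing \(\delta,\eta\) small relative to \(R-R_0\).
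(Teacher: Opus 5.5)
Your proposal is correct and is essentially the paper's proof. You use \(g^*\omega\) off the chart and \((\jmath^{-1})^*\omega^{\ironed}_\mY\) on it, glued along a collar where \(\omega^{\ironed}_\mY=\mg^*\omega_\mZ\) (once \(R_0+2(\delta+\eta)<R\)), and you identify \(C\) using the agreement of the potential with \(G^{R_0,\bm{0}}_\mY\) near the face \(\tri_{\bm{a},\,R_0}\). Your additional checks — the explicit collar, convexity and boundary behaviour of the ironed potential, and surjectivity of its Legendre map so that level sets match the regions \(\mY(R')\) — fill in details the paper leaves implicit.
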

\begin{proof}
  We use \(\mg^*\omega\) on the complement of
  \(\jmath(\mY(R))\). On \(\jmath(\mY(R))\) we use
  \((\jmath^{-1})^*\omega^{\ironed}_{\mY}\) with \(\delta,\eta\)
  chosen so that \(R_0+2(\delta+\eta)<R\). The
  symplectic form \(\omega^{\ironed}_{\mY}\) agrees with
  \(\mg^*\omega_{\mZ}\) in a neighbourhood of
  \(\partial\mY(R)\), so this piecewise definition matches
  smoothly. The symplectic form \(\omega^{\ironed}_{\mY}\)
  coincides with \(\omega^{R_0,\bm{0}}_\mY\) in a
  neighbourhood of \(\mC\), and so makes \(C\) symplectomorphic
  to the toric orbifold with moment image
  \(\tri_{\bm{a},\,R_0}\).
\end{proof}

\begin{figure}[htb]
  \centering
  \begin{tikzpicture}
    \draw (0,3) -- (0,1) -- (1.5,0) -- (5,0);
    \draw[dotted] (2.5,0) -- ++ (-3.333*0.75,3.333*0.5);
    \draw[dotted] (3.5,0) -- ++ (-4.666*0.75,4.666*0.5);
    \draw[dotted] (1.5,0) -- ++ (0.75,-0.5) node [pos=1.5,sloped] {\(R_0\)};
    \draw[dotted] (2.5,0) -- ++ (2*0.75,-2*0.5);
    \draw[draw=none] (3,0) -- ++ (3.5*0.75,-3.5*0.5) node [midway,sloped] {\(R_0+2(\delta\pm\eta)\)};
    \draw[dotted] (3.5,0) -- ++ (2*0.75,-2*0.5);
    \draw[draw=none] (1.5,2) -- ++ (1.5,-1) node [midway,sloped] {\(G^{\ironed}_{\mY}=G_{\mZ}\)};
    \node (g) at (-2,1) {\(G^{\ironed}_{\mY}=G^{R_0,\bm{0}}_{\mY}\)};
    \draw[->] (g) -- (1,0.7);
    \node (ls) at (-1,-1) {level sets of \(2\ell_0\)};
    \draw[->] (ls) -- (1.8,-0.2);
    \draw[->] (ls) -- (1.8+1,-0.2);
  \end{tikzpicture}
  \caption{A schematic of the symplectic potential
    \(G^{\ironed}_{\mY}\) when \(n=2\).}\label{fig:G_ironed_Y}
\end{figure}

\begin{remark}
  By {\cite[Lemma 5.5.B]{McDuffPolterovich}}, if we have an
  almost K\"{a}hler manifold \((Z,\omega,J)\) and a holomorphic
  embedding \(\iota\colon \mZ(R)\to Z\) for some \(R\), we can
  ``iron'' the symplectic form near to \(\iota(0)\) to get a new
  symplectic form \(\omega'\) taming \(J\) such that
  \(\iota^*\omega'\) coincides with a multiple of \(\omega_\mZ\)
  on some \(\mZ(R')\subseteq\mZ(R)\) and \(\omega'\) coincides
  with \(\omega\) on the complement of
  \(\iota(\mZ(R))\). Therefore we can always perform the
  weighted blow-up along a sufficiently small ellipsoid after
  ironing.
\end{remark}

\subsection{Implanting a weighted blow-down}

We need one more K\"{a}hler form, this time on \(\mZ\), for the
purposes of blowing down.

\begin{definition}
  Given \(\delta>0\), let \(\zeta_\delta\) be the convex \(C^1\)
  function defined piecewise by
  \begin{equation}\label{eq:zeta_delta}
    \zeta_\delta(t)=\begin{cases}
      \frac{1}{2}(t\log 2\delta-\delta)&\text{ if }t\leq \delta\\
      L(t)&\text{ if }t\geq \delta.
      \end{cases}
  \end{equation}
  Given \(0<\eta<\delta\), let \(\zeta_{\delta,\eta}\) be the
  convex smoothing of \(\zeta_\delta\) given by the construction
  of Ghomi {\cite[Section 2]{Ghomi}}. This is a convex
  \(C^\infty\) function which coincides with \(\zeta_\delta\)
  outside the interval
  \((\delta-\eta,\delta+\eta)\).
\end{definition}

\begin{figure}[htb]
  \centering
  \begin{tikzpicture}
    \fill[lightgray] (0.8,1) -- (0.8,-1) -- (1.2,-1) -- (1.2,1) -- cycle;
    \draw[dotted] (1,-0.5) -- (1,1);
    \draw[<->] (0.8,1) -- (1.2,1) node [midway,above] {\(2\eta\)};
    \draw (0,-1) -- (0,1) node [left] {\(\zeta_\delta(t)\)};
    \draw (-1,0) -- (3,0) node [below] {\(t\)};
    \node at (1,-0.5) [below] {\(\delta\)};
    \draw[very thick,variable=\x,smooth,domain=-1:1] plot (\x,{0.5*(-1+ln(2)*\x)});
    \draw[very thick,variable=\x,smooth,domain=1:3] plot (\x,{0.5*(\x*ln(2*\x)-\x)});
  \end{tikzpicture}
  \caption{The function \(\zeta_\delta(t)\) defined in Equation
    \eqref{eq:zeta_delta}. Its smoothing looks almost identical
    but differs slightly on the shaded strip.}\label{fig:zeta_delta}
\end{figure}

\begin{example}\label{exm:ironed_Z}
  Let \(0<R_0<\Rb\) and \(0<\eta<\delta\) be numbers
  such that \(R_0+2(\delta+\eta)<\Rb\). Let
  \[G^{\ironed}_{\mZ(\Rb)} = \sum_{j=1}^nL(\mm_i) +
    \zeta_{\delta,\eta}\left(\ell_0(\bmm) - \frac{R_0}{2}\right)
    - \frac{1}{2}\bm{a}\cdot\bmm\log(\Rb-R_0),\] let
  \(\psi^{\ironed}_{\mZ(\Rb)}(\bmm)=\frac{\partial
    G^{\ironed}_{\mZ(\Rb)}}{\partial\bmm}\) be the
  diffeomorphism to the conjugate variables, let
  \(F^{\ironed}_{\mZ(\Rb)}\) be the Legendre-dual K\"{a}hler
  potential and let \(\omega^{\ironed}_{\mZ(\Rb)}\) be the
  associated K\"{a}hler form. See Figure \ref{fig:G_ironed_Z}
  for an illustration of \(G^{\ironed}_{\mZ(\Rb)}\).

  We have
  \begin{equation}\label{eq:matching_potentials}
    G^{\ironed}_{\mZ(\Rb)}=G^{R_0}_{\mY(\Rb)}\qquad
    \text{on}\qquad\trifil_{\bm{a},\,\Rb} \setminus
    \trifil_{\bm{a},\,R_0+2(\delta+\eta)}.
  \end{equation}
  By Equation \eqref{eq:psi_images}, this means that, for any
  \(R\in [R_0+2(\delta+\eta),\Rb]\):
  \begin{equation}\label{eq:ironed_images}
    \psi_{\mZ(\Rb)}^{\ironed}(\tri_{\bm{a},\,R}) =
    \resh_{\bm{a},\,R} -
    \frac{1}{2}\bm{a}\log\left(\frac{\Rb - R_0}{R -
        R_0}\right).
  \end{equation}
  In particular,
  \(\psi_{\mZ(\Rb)}^{\ironed}(\tri_{\bm{a},\,\Rb})=\resh_{\bm{a},\,\Rb}\).
  On the other hand, by taking \(\delta\) sufficiently small, we
  can make
  \(\log\left(\frac{\Rb-R_0}{2(\delta+\eta)}\right)\) as
  large as we want, so for any \(0<\Sb<\Rb\) we can ensure
  that
  \begin{equation}\label{eq:ironed_subset}
    \psi^{\ironed}_{\mZ(\Rb)}(\trifil_{\bm{a},\,R_0
      + 2(\delta+\eta)}) \subseteq \mem_{\bm{a},\,\Sb/2}.
  \end{equation}
  Moreover, Equation \eqref{eq:matching_potentials} implies that
  \(\omega_{\mZ(\Rb)}^{\ironed}\) pulls back to
  \begin{equation}\label{eq:matching_forms}
    \mg^*\omega_{\mZ(\Rb)}^{\ironed}=\omega^{R_0}_{\mY(\Rb)}\qquad
    \text{on} \qquad \blr^{-1}\left(\psi^{\ironed}_{\mZ(\Rb)}(\trifil_{\bm{a},\,\Rb} \setminus
      \trifil_{\bm{a},\,R_0+2(\delta+\eta)})\right)\subseteq\mY(\Rb),
  \end{equation}
  but, if we take \(\delta\) sufficiently small then,
  \(\mem_{\bm{a},\,\Rb}\setminus\mem_{\bm{a},\,\Sb/2}\subseteq
  \psi^{\ironed}_{\mZ(\Rb)}(\trifil_{\bm{a},\,\Rb} \setminus
  \trifil_{\bm{a},\,R_0+2(\delta+\eta)})\) by Equation
  \eqref{eq:ironed_subset}. In particular, for any given
  \(0<\Sb<\Rb\), we can choose \(\delta\) sufficiently small to
  ensure that
  \begin{equation}\label{eq:ironed_outside}
    \mg^*\omega^{\ironed}_{\mZ(\Rb)} =
    \omega^{R_0}_{\mY(\Rb)} \qquad\text{on}
    \qquad\mY(\Rb)\setminus \mY(\Sb/2).
  \end{equation}
\end{example}

\begin{figure}[htb]
  \centering
  \begin{tikzpicture}
    \draw (0,10/3) -- (0,0) -- (5,0);
    \draw[dashed] (0,1) -- (1.5,0);
    \draw[dashed] (0,10/3) -- (5,0);
    \draw[dashed] (5,0) -- ++ (0.75,-0.5) node [pos=1.5,sloped] {\(\Rb\)};
    \draw[dashed] (2.5,0) -- ++ (-3.333*0.75,3.333*0.5);
    \draw[dashed] (3.5,0) -- ++ (-4.666*0.75,4.666*0.5);
    \draw[dashed] (1.5,0) -- ++ (0.75,-0.5) node [pos=1.5,sloped] {\(R_0\)};
    \draw[dashed] (2.5,0) -- ++ (2*0.75,-2*0.5);
    \draw[draw=none] (3,0) -- ++ (3.5*0.75,-3.5*0.5) node [midway,sloped] {\(R_0+2(\delta\pm\eta)\)};
    \draw[dashed] (3.5,0) -- ++ (2*0.75,-2*0.5);
    \draw[draw=none] (1.5,1.9) -- ++ (1.5,-1) node [midway,sloped] {\(G^{\ironed}_{\mZ(\Rb)}=G^{R_0}_{\mY(\Rb)}\)};
    \draw [decorate,decoration={brace,amplitude=5pt,mirror,raise=1ex}] (0,3.333*0.5) -- (0,0) node[midway,xshift=-4.5em,align=center]{\(G^{\ironed}_{\mZ(\Rb)}\)\\\(=G_{\mZ}+\text{linear}\)};
    \node (ls) at (-1,-1) {level sets of \(2\ell_0\)};
    \draw[->] (ls) -- (1.8,-0.2);
    \draw[->] (ls) -- (1.8+1,-0.2);
    \draw[->] (4,1.5) -- (6,1.5) node [midway,above] {\(\psi^{\ironed}_{\mZ(\Rb)}\)};
    \begin{scope}[shift={(12,3)}]
      \draw[dotted] (-3.791759,0) -- ++ (-1,-1);
      \draw[->] (-3.791759,0) -- ++ (-0.5,-0.5);
      \node at (-5.1,-0.3) {\((-1,-1)\)};
      \draw[dotted] (0,-3.38629) -- ++ (-1.25,-2.5);
      \draw[->] (0,-3.38629) -- ++ (-0.5,-1) node [midway,right] {\(-\bm{a}\)};
      \draw[very thick,dashed,variable=\x,samples=200,domain=-3.791759:-0.0000001] plot ({\x},{0.5*ln(1-exp(2*\x))}) -- (0,-3.38629);
      \node at (-3.8,0.3) {\(\resh_{\bm{a},\,\Rb}\)};
      \begin{scope}[shift={(-1,-1)}]
        \draw[very thick,variable=\x,samples=200,domain=-3.791759:-0.0000001] plot ({\x},{0.5*ln(1-exp(2*\x))}) -- (0,-5);
        \node at (-3.8,-0.2) {\(\resh_{\bm{a},\,\Sb/2}\)};
      \end{scope}
      \foreach \y in {1,2,3,4} {
        \begin{scope}[shift={({-0.25*\y},{-0.5*\y})}]
          \draw[dotted,variable=\x,samples=200,domain=-3.791759:-0.0000001] plot ({\x},{0.5*ln(1-exp(2*\x))}) -- (0,-3.38629);
        \end{scope}
      }
      \begin{scope}[shift={({-0.25*5},{-0.5*5})}]
        \draw[very thick,dashed,variable=\x,samples=200,domain=-3.791759:-0.0000001] plot ({\x},{0.5*ln(1-exp(2*\x))}) -- (0,-3.38629-0.1);
      \end{scope}
      \node at (-3.6,-2.6) [below] {\(\psi^{\ironed}_{\mZ(\Rb)}(\tri_{\bm{a},\,R_0+2(\delta+\eta)})\)};
    \end{scope}
  \end{tikzpicture}
  \caption{A schematic of the symplectic potential
    \(G^{\ironed}_{\mZ(\Rb)}\) when \(n=2\). On the right it shows the
    image of the moment polygon under the Legendre transform; the
    dotted lines are the images of level sets of \(2\ell_0\)
    strictly between \(R_0+2(\delta+\eta)\) and \(\Rb\), obtained by
    translating \(\resh_{\bm{a},\,\Rb}\) in the
    \(-\bm{a}\)-direction. The solid line is
    \(\resh_{\bm{a},\,\Sb/2}\) (obtained by translating
    \(\resh_{\bm{a},\,\Rb}\) in the \((-1,-1)\)-direction), to
    illustrate Equation
    \eqref{eq:ironed_subset}.}\label{fig:G_ironed_Z}
\end{figure}

\begin{lemma}\label{lma:weighted_blowdown}
  Given a complex manifold \(Z\) and a weighted blow up
  
  \begin{center}
    \begin{tikzpicture}
      \node (V) at (0,0) {\(\mY(\Rb)\)};
      \node (W) at (0,-2) {\(\mZ(\Rb)\)};
      \node (Y) at (2,0) {\(Y\)};
      \node (Z) at (2,-2) {\(Z\)};
      \draw[->] (V) -- (Y) node [midway,above] {\(\jmath\)};
      \draw[->] (V) -- (W) node [midway,left] {\(\mg\)};
      \draw[->] (W) -- (Z) node [midway,above] {\(\iota\)};
      \draw[->] (Y) -- (Z) node [midway,right] {\(g\)};
    \end{tikzpicture}
  \end{center}

  with weights \(\bm{a}\), suppose that \(Y\) admits a
  symplectic form \(\upsilon\) such that, for some values
  \(R_0,\Sb\in(0,\Rb)\) and positive coefficients \(r_1,r_2\),
  the form \(\jmath^*\upsilon\) coincides with
  \(r_1\omega^{R_0}_{\mY(\Rb)}+r_2\mg^*\omega_\mZ\) on
  \(\mY(\Sb)\). Then there exists a symplectic structure
  \(\omega\) on \(Z\) such that:
  \begin{itemize}
  \item[(1)] \(\omega\) tames the complex structure,
  \item[(2)] \(g^*\omega=\upsilon\) on the complement of
    \(\jmath(\mY(\Rb))\),
  \item[(3)] on \(\mZ(\Sb)\), the form \(\iota^*\omega\)
    coincides with
    \(r_1\omega^{\ironed}_{\mZ(\Rb)}+r_2\omega_{\mZ}\).
  \end{itemize}
\end{lemma}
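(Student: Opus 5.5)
The plan is to define \(\omega\) piecewise, exactly mirroring the proof of the blow-up lemma but in the opposite direction. Since \(g\) is an isomorphism away from \(C=\exc(g)\), and \(C\subseteq\jmath(\mY(\Sb))\) lies over \(p=\iota(0)\), the form \((g^{-1})^*\upsilon\) is a well-defined tame symplectic form on \(Z\setminus\{p\}\). I will set
\[\omega = \begin{cases} (g^{-1})^*\upsilon & \text{on } Z\setminus \iota(\mZ(\Sb/2)),\\ (\iota^{-1})^*\left(r_1\omega^{\ironed}_{\mZ(\Rb)}+r_2\omega_\mZ\right) & \text{on } \iota(\mZ(\Sb)).\end{cases}\]
Property (2) is immediate from the first line, provided the two definitions agree on the overlap. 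Property (3) is immediate from the second line. Property (1) also follows: each piece tames the complex structure. Indeed \(\upsilon\) is tame, \(g\) is a biholomorphism off \(C\), \(\omega^{\ironed}_{\mZ(\Rb)}\) is K\"ahler for the standard complex structure by Guillemin--Abreu theory, \(\omega_\mZ\) is K\"ahler, and \(r_1,r_2>0\).

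The key step is checking that the two definitions agree on the shell \(\iota(\mZ(\Sb)\setminus\mZ(\Sb/2))\). Pulling back by \(\mg\), which is a biholomorphism off \(\mC\), this amounts to showing
\[r_1\mg^*\omega^{\ironed}_{\mZ(\Rb)}+r_2\mg^*\omega_\mZ = \jmath^*\upsilon\quad\text{on }\mY(\Sb)\setminus\mY(\Sb/2).\]
By hypothesis the right-hand side is \(r_1\omega^{R_0}_{\mY(\Rb)}+r_2\mg^*\omega_\mZ\) on \(\mY(\Sb)\). So it suffices to know \(\mg^*\omega^{\ironed}_{\mZ(\Rb)}=\omega^{R_0}_{\mY(\Rb)}\) on \(\mY(\Rb)\setminus\mY(\Sb/2)\). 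This is exactly Equation \eqref{eq:ironed_outside} of Example \ref{exm:ironed_Z}. To arrange it, I will choose \(\delta,\eta\) small enough that \(R_0+2(\delta+\eta)<\Rb\) and that Equation \eqref{eq:ironed_subset} holds for the given \(\Sb\).

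The main obstacle is making this identification legitimate, since the two K\"ahler forms come from different symplectic potentials on different polytopes and are compared via complex coordinates \(\blr+i\bm\theta\) on the open torus. Both forms are \(2i\partial\bar\partial\) of the Legendre transforms of \(G^{\ironed}_{\mZ(\Rb)}\) and \(G^{R_0}_{\mY(\Rb)}\). By Equation \eqref{eq:matching_potentials} these potentials coincide on the region \(\trifil_{\bm{a},\Rb}\setminus\trifil_{\bm a,R_0+2(\delta+\eta)}\), so their gradients \(\psi\) and Legendre transforms agree on its \(\psi\)-image. Equation \eqref{eq:ironed_subset} shows this image contains \(\mem_{\bm a,\Rb}\setminus\mem_{\bm a,\Sb/2}\), which is the \(\blr\)-image of \(\mY(\Rb)\setminus\mY(\Sb/2)\). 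Because \(\mg\) is the identity in these torus coordinates, the two K\"ahler potentials, and hence the forms, agree on the open dense torus part of the shell. By continuity they agree on the whole shell, including its intersection with the toric boundary divisors \(\{z_j=0\}\), which meet the shell but not \(\mC\).

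The remaining points are minor. The shell avoids \(\mC\), so all identifications there are honest biholomorphisms. The resulting \(\omega\) is smooth because the two pieces agree on an open overlap. Finally, \(\omega\) is closed and nondegenerate because each piece is.
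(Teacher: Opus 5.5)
Your proposal is correct and is essentially the paper's proof. You use the same piecewise definition of \(\omega\) on \(Z\setminus\iota(\mZ(\Sb/2))\) and on \(\iota(\mZ(\Sb))\), with agreement on the overlap supplied by Equation \eqref{eq:ironed_outside} after shrinking \(0<\eta<\delta\); your matching of K\"{a}hler potentials on the open torus plus continuity just unpacks Example \ref{exm:ironed_Z}. One caveat: exactly as in the paper, property (1) tacitly uses that \(\upsilon\) tames the complex structure on \(Y\). This is not among the lemma's stated hypotheses, but it does hold where the lemma is applied in Proposition \ref{prp:ellipsoids_1}.
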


\begin{figure}[htb]
  \centering
  \begin{tikzpicture}
    \draw (0,0) -- (0,1.5) to[out=90,in=90] (2,1.5) -- (2,-1.5) to[out=-90,in=-90] (0,-1.5) -- cycle;
    \draw (4,0) -- (4,1.5) to[out=90,in=90] (6,1.5) -- (6,-1.5) to[out=-90,in=-90] (4,-1.5) -- cycle;
    \draw[dotted] (0,1) -- (6,1);
    \draw[dotted] (0,0) -- (6,0);
    \draw[dotted] (0,-1) -- (6,-1);
    \node[fill=white] at (3,1) {\(\Rb\)};
    \node[fill=white] at (3,0) {\(\Sb\)};
    \node[fill=white] at (3,-1) {\(\Sb/2\)};
    \draw [decorate,decoration={brace,amplitude=5pt,mirror,raise=4ex}] (-1.5,1) -- (-1.5,-2) node[midway,xshift=-4.5em]{\(\jmath(\mY(\Rb))\)};
    \draw [decorate,decoration={brace,amplitude=5pt,mirror,raise=4ex}] (7.5,-2) -- (7.5,1) node[midway,xshift=4.5em]{\(\jmath(\mY(\Rb))\)};
    \node at (1,1.5) {\(Y\)};
    \node at (5,1.5) {\(Z\)};
    \draw [decorate,decoration={brace,amplitude=5pt,mirror,raise=0ex}] (-0.5,2) -- (-0.5,-1) node[midway,xshift=-1em]{\(\upsilon\)};
    \draw [decorate,decoration={brace,amplitude=5pt,mirror,raise=0ex}] (-0.8,0) -- (-0.8,-2);
    \draw[draw=none] (-1,-2) -- (-1,-0.2) node[above,midway,sloped] {\footnotesize\(r_1\omega^{R_0}_{\mY(\Rb)}+r_2g^*\omega_{\mZ}\)};
    \draw [decorate,decoration={brace,amplitude=5pt,mirror,raise=0ex}] (6.5,-1) -- (6.5,2) node[midway,xshift=2.3em]{\((g^{-1})^*\upsilon\)};
    \draw [decorate,decoration={brace,amplitude=5pt,mirror,raise=0ex}] (7,-2) -- (7,0);
    \draw[draw=none] (7.2,-0.2) -- (7.2,-2) node[above,midway,sloped] {\footnotesize\(r_1\omega^{\ironed}_{\mZ(\Rb)}+r_2\omega_{\mZ}\)};
    \draw[->] (2.5,1.5) -- (3.5,1.5) node[midway,above] {\(g\)};
    \draw (0.5,-1.7) to[out=-45,in=-135] (1.5,-1.7);
    \node (p) at (5,-1.9) {\(\bullet\)};
    \node at (p) [right] {\(p\)};
    \node at (1,-1.9) [above] {\(C\)};
  \end{tikzpicture}
  \caption{A schematic for the proof of Lemma \ref{lma:weighted_blowdown}.}\label{fig:weighted_blowdown}
\end{figure}

\begin{proof}
  By Equation \eqref{eq:ironed_outside}, we can choose
  \(0<\eta<\delta\) sufficiently small to ensure that
  \(\mg^*\omega^{\ironed}_{\mZ(\Rb)}\) coincides with
  \(\omega^{R_0}_{\mY(\Rb)}\) on
  \(\mY(\Rb)\setminus\mY(\Sb/2)\). Define the symplectic form
  \(\omega\) on \(Z\) as follows:
  \begin{itemize}
  \item On \(Z\setminus\iota(\mZ(\Sb/2))\), we use
    \(\omega=(g^{-1})^*\upsilon\). In particular, on
    \(Y\setminus\jmath(\mY(\Rb))\), we have
    \(g^*\omega=\upsilon\), proving (2).
  \item On \(\iota(\mZ(\Sb))\), we use
    \((\iota^{-1})^*\left(r_1\omega^{\ironed}_{\mZ(\Rb)} +
      r_2\omega_\mZ\right)\).
  \end{itemize}
  On the overlap \(\mZ(\Sb)\setminus\mZ(\Sb/2)\), these forms
  coincide (Equation \eqref{eq:ironed_outside}). Since we are
  using K\"{a}hler forms to interpolate, \(\omega\) still tames
  the complex structure, proving (1).
\end{proof}

\subsection{Finding ellipsoids}

\begin{proposition}\label{prp:finding_ellipsoids}
  Let \(R_0,\Sb\in (0,\Rb)\). If \(0<\eta<\delta\) are chosen
  sufficiently small then the K\"{a}hler manifold
  \((\mZ(\Sb/2),r_1\omega^{\ironed}_{\mZ(\Rb)}+r_2\omega_\mZ)\)
  contains a symplectic ellipsoid \(\Ell(r_1R_0\bm{a}^{-1})\).
\end{proposition}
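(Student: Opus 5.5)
The approach is to work entirely in the Guillemin--Abreu picture. Both \(\omega^{\ironed}_{\mZ(\Rb)}\) and \(\omega_\mZ\) are \(T^n\)-invariant K\"{a}hler forms on \(\mZ=\CC^n\) for the same complex structure \(i\) and the same torus action. Hence
\[\Omega\coloneqq r_1\omega^{\ironed}_{\mZ(\Rb)}+r_2\omega_\mZ\]
is again a toric K\"{a}hler form, with K\"{a}hler potential \(r_1F^{\ironed}_{\mZ(\Rb)}+r_2F_\mZ\), which is strictly convex in \(\blr\). Its moment map is
\[\bm{\nu}=r_1\bmm^{\ironed}+r_2\bmm_\mZ,\]
where \(\bmm^{\ironed}=\partial F^{\ironed}_{\mZ(\Rb)}/\partial\blr\) and \(\bmm_\mZ=\partial F_\mZ/\partial\blr\) are the two individual moment maps. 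In particular \(\bm{\nu}\) is the gradient of a strictly convex function, so it is injective on \(\RR^n\). Since both summands extend continuously over the coordinate hyperplanes, sending them into the boundary of \(\RR^n_{\geq 0}\), \((\CC^n,\Omega)\) is a toric symplectic manifold whose moment image is the orthant.

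The plan is then to show that \(\bm{\nu}(\mZ(\Sb/2))\) contains the simplex \(\trifil_{\bm{a},\,r_1R_0}\), and to conclude as follows. Restricted to the torus-invariant open set \(\bm{\nu}^{-1}(U)\), for \(U\) an open neighbourhood of \(\trifil_{\bm{a},\,r_1R_0}\) in \(\RR^n_{\geq 0}\), the manifold \((\CC^n,\Omega)\) is equivariantly symplectomorphic to the standard toric domain in \(\CC^n\) over \(U\). This uses uniqueness of toric structures over an orthant-type moment region (Delzant/Lerman--Tolman, or directly action-angle coordinates together with the fact that the complex structure near the axes is the standard one). The preimage of \(\trifil_{\bm{a},\,r_1R_0}\) is then exactly \(\Ell(r_1R_0\bm{a}^{-1})\) by Example \ref{exm:Cn}.

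To prove the containment I argue on the boundary of \(\mZ(\Sb/2)\). Its \(\blr\)-image is \(\resh_{\bm{a},\,\Sb/2}\), whose complement in \(\mem_{\bm{a},\,\Sb/2}\) is excluded by Equation \eqref{eq:ironed_subset}. More precisely, choose \(\delta,\eta\) small enough that \eqref{eq:ironed_subset} holds. Then for \(\blr\in\resh_{\bm{a},\,\Sb/2}\) the point \(\bmm^{\ironed}(\blr)\) lies outside the interior of \(\trifil_{\bm{a},\,R_0+2(\delta+\eta)}\), so that
\[\bm{a}\cdot\bmm^{\ironed}\geq \frac{R_0}{2}+\delta+\eta.\]
Since \(\bmm_\mZ\geq 0\) componentwise and \(\bm{a}\) has positive entries, this gives
\[\bm{a}\cdot\bm{\nu}\geq r_1\left(\frac{R_0}{2}+\delta+\eta\right)>\frac{r_1R_0}{2}.\]
So \(\bm{\nu}(\partial\mZ(\Sb/2))\) is disjoint from \(\trifil_{\bm{a},\,r_1R_0}\). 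On the other hand, \(\bm{\nu}(\bm{0})=\bm{0}\) lies inside \(\trifil_{\bm{a},\,r_1R_0}\), and \(\bm{\nu}\) is a homeomorphism from \(\CC^n/T^n\) onto \(\RR^n_{\geq 0}\). Since \(\mZ(\Sb/2)/T^n\) is a compact neighbourhood of the origin there, a connectedness argument shows that \(\bm{\nu}(\mZ(\Sb/2))\) contains the star-shaped set \(\trifil_{\bm{a},\,r_1R_0}\), with room to spare because the inequality above is strict. This supplies the open neighbourhood \(U\) needed in the previous paragraph.

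The main obstacle is the identification step. I must make sure that \(\bm{\nu}\) really is a homeomorphism of the orbit space onto the full orthant, including the faces, so that the torus-invariant subset over \(\trifil_{\bm{a},\,r_1R_0}\) is a genuine toric domain and not just its open dense part. For this I would check that \(r_1G^{\ironed}\) and \(r_2G_\mZ\), after Legendre transforming the sum, give a symplectic potential on the orthant of the form \(\sum_j L(\nu_j)+\text{smooth}\) up to rescaling. Both summands have the standard \(L(\mm_j)\) singular behaviour along the faces \(\mm_j=0\) and are smooth elsewhere on \(\Polytope_\mZ\), because the \(\zeta_{\delta,\eta}\) term is smooth. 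Abreu's boundary analysis then applies, and the standard toric model over the orthant gives the equivariant symplectomorphism.
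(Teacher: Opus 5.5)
Your proof is correct, and it runs on the same estimate as the paper's. The moment map of $r_1\omega^{\ironed}_{\mZ(\Rb)}+r_2\omega_\mZ$ is $\bm{\nu}=r_1(\psi^{\ironed}_{\mZ(\Rb)})^{-1}(\blr)+\frac{r_2}{2}e^{2\blr}$, and discarding the nonnegative second term lets you feed $\bm{a}\cdot\bm{\nu}$ into \eqref{eq:ironed_subset}. The two proofs differ only in how they use this estimate.

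The paper uses it pointwise. If $\bm{\nu}(\blr)\in\trifil_{\bm{a},\,r_1R_0}$, then $\bmm'=(\psi^{\ironed}_{\mZ(\Rb)})^{-1}(\blr)$ satisfies $\bm{a}\cdot\bmm'\leq R_0/2$. Hence $\blr\in\psi^{\ironed}_{\mZ(\Rb)}(\trifil_{\bm{a},\,R_0+2(\delta+\eta)})\subseteq\mem_{\bm{a},\,\Sb/2}$, which gives $\bm{\nu}^{-1}(\trifil_{\bm{a},\,r_1R_0})\subseteq\mZ(\Sb/2)$ directly. There is also room to spare, since $R_0<R_0+2(\delta+\eta)$.

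You apply the contrapositive on $\resh_{\bm{a},\,\Sb/2}$ instead. That costs an open-mapping step for $\psi^{\ironed}_{\mZ(\Rb)}$ and then a connectedness argument through the orbit-space homeomorphism. But that homeomorphism already includes surjectivity onto the orthant, and with surjectivity and injectivity the pointwise containment finishes at once. So your topological step is a detour. Where your write-up is better is in stating two facts the paper uses silently: every point of the simplex is attained by $\bm{\nu}$, and the torus-invariant set over the simplex is the standard ellipsoid.

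For those two facts, I would not go through the symplectic potential of the sum, as your last paragraph proposes. That potential is the Legendre transform of $r_1F^{\ironed}_{\mZ(\Rb)}+r_2F_\mZ$, which is an infimal convolution, so its boundary behaviour cannot be read off from that of $G^{\ironed}_{\mZ(\Rb)}$ and $G_\mZ$ separately. This is exactly the point of the paper's footnote. A simpler route:
\begin{itemize}
\item $\nu_j\geq\frac{r_2}{2}|z_j|^2$, and $\nu_j=0$ exactly on $\{z_j=0\}$, so $\bm{\nu}$ is proper, including as a map from $(\CC^\times)^n$ to $\RR^n_{>0}$.
\item In the $\blr$-variables, $\bm{\nu}$ is an injective local diffeomorphism; being proper, it is therefore onto $\RR^n_{>0}$.
\item Uniqueness of toric structures with proper moment map onto the contractible region $\RR^n_{\geq 0}$ then identifies the set over the simplex with the standard ellipsoid. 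Alternatively, use action-angle coordinates together with the local normal form near the faces.
\end{itemize}
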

\begin{proof}
  We choose \(\delta\) and \(\eta\) small enough that
  \(\psi^{\ironed}_{\mZ(\Rb)}(\trifil_{\bm{a},\,R_0 +
    2(\delta+\eta)}) \subseteq \mem_{\bm{a},\,\Sb/2}\) as in
  Equation \eqref{eq:ironed_subset}. The toric K\"{a}hler form
  \(r_1\omega^{\ironed}_{\mZ(\Rb)}+r_2\omega_{\mZ}\) comes from
  the the K\"{a}hler potential\footnote{We work with the linear
    combination of K\"{a}hler potentials rather than the linear
    combination of symplectic potentials. Note that rescaling
    the symplectic potential does not preserve the condition
    that \(G=L(\ell(\bmm)-R_i/2)+\text{smooth }h(\bmm)\) along
    the boundary, and the Legendre transform is not additive
    anyway (the Legendre transform of a sum of functions is the
    {\em infimal convolution} of the individual Legendre
    transforms).} \[F=r_1F^{\ironed}_{\mZ(\Rb)}+r_2F_{\mZ}.\]
  The moment map \(\bmm\colon \mZ\to\RR_{\geq 0}^n\) is then
  given by
  \[\bmm=\frac{\partial
      F}{\partial\blr} =
    r_1\left(\psi^{\ironed}_{\mZ(\Rb)}\right)^{-1}(\blr) +
    r_2\left(\psi_{\mZ}\right)^{-1}(\blr).\]  
  Choose \(\bmm\in\trifil_{\bm{a},\,r_1R_0}\). We know that
  \(\left(\psi_{\mZ}\right)^{-1}(\blr)=\frac{1}{2}e^{2\blr}\),
  so
  \(\left(\psi^{\ironed}_{\mZ(\Rb)}\right)^{-1}(\blr) =
  \frac{1}{r_1}\left(\bmm -
    \frac{r_2}{2}e^{2\blr}\right)\). Writing \(\bmm' =
  \frac{1}{r_1}\left(\bmm - \frac{r_2}{2}e^{2\blr}\right)\), we
  have
  \[\bm{a}\cdot\bmm' =
    \frac{\bm{a}\cdot\bmm}{r_1}-\frac{r_2}{2r_1}\bm{a}\cdot
    e^{2\blr} \leq \frac{R_0}{2},\] since
  \(\bmm\in\trifil_{\bm{a},\,r_1R_0}\). This means that
  \(\bmm'\in\trifil_{\bm{a},\,R_0}\subseteq
  \trifil_{\bm{a},\,R_0+2(\delta+\eta)}\), so
  \(\blr \in \psi^{\ironed}_{\mZ(\Rb)}\left(\trifil_{\bm{a},\,R_0
      + 2(\delta+\eta)}\right)\), which is contained in
  \(\mem_{\bm{a},\,S/2}\) by our choice of \(\delta\) and
  \(\eta\). This shows that the moment image of \(\mZ(S/2)\)
  contains \(\trifil_{\bm{a},\,R_0}\), so that \(\mZ(S/2)\)
  contains a symplectically embedded copy of
  \(\Ell(r_1R_0\bm{a}^{-1})\).
\end{proof}

\subsection{Proof of Theorem \ref{thm:ellipsoids_galore}}
\label{sct:kaehler_proof}
We begin with some preliminary results.

\begin{proposition}\label{prp:ellipsoids_1}
  Given a complex manifold \(Z\) and a weighted blow up
  
  \begin{center}
    \begin{tikzpicture}
      \node (V) at (0,0) {\(\mY(\Rb)\)};
      \node (W) at (0,-2) {\(\mZ(\Rb)\)};
      \node (Y) at (2,0) {\(Y\)};
      \node (Z) at (2,-2) {\(Z\)};
      \draw[->] (V) -- (Y) node [midway,above] {\(\jmath\)};
      \draw[->] (V) -- (W) node [midway,left] {\(\mg\)};
      \draw[->] (W) -- (Z) node [midway,above] {\(\iota\)};
      \draw[->] (Y) -- (Z) node [midway,right] {\(g\)};
    \end{tikzpicture}
  \end{center}

  with weights \(\bm{a}\), suppose that \(Y\) admits a
  symplectic form \(\upsilon\) such that:
  \begin{itemize}
  \item \(\upsilon\) tames the complex structure, and
  \item the pullback of \(\upsilon\) to \(C=\exc(g)\) is
    cohomologous to the toric symplectic form coming from the
    moment polytope \(\tri_{\bm{a},\,R}\) for some \(R\) (not
    necessarily smaller than \(\Rb\)).
  \end{itemize}
  Then there exists a symplectic structure \(\omega\) on \(Z\)
  such that:
  \begin{itemize}
  \item[(1)] \(\omega\) tames the complex structure,
  \item[(2)] \(g^*\omega=\upsilon\) on the complement of
    \(\jmath(\mY(\Rb))\),
  \item[(3)] \((Z,\omega)\) admits a symplectic embedding of the
    ellipsoid \(\Ell(R\bm{a}^{-1})\).
  \end{itemize}
\end{proposition}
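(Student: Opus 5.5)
The plan is to combine the ironing Lemma \ref{lma:ironing}, the blow-down Lemma \ref{lma:weighted_blowdown} and Proposition \ref{prp:finding_ellipsoids}. The main bookkeeping problem is to arrange that the scale \(R_0\) of the model form on \(\mY(\Rb)\) lies in \((0,\Rb)\) while still producing an ellipsoid of size \(R\).

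\textbf{Reference form.} First fix \(R_0\in(0,\Rb)\) and set \(r_1\coloneqq R/R_0\). Consider \(\gamma\coloneqq r_1\omega^{R_0}_{\mY(\Rb)}\) on \(\mY(\Rb)\). It is K\"{a}hler, so it tames the complex structure. Its restriction to \(\mC\) is \(r_1\) times the toric form with moment polytope \(\tri_{\bm{a},\,R_0}\), which is the toric form with polytope \(\tri_{\bm{a},\,r_1R_0}=\tri_{\bm{a},\,R}\).

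\textbf{Cohomology and ironing.} Since \(\mY(\Rb)\) deformation retracts onto \(\mC\), we have \(H^2(\mY(\Rb);\RR)\cong H^2(\mC;\RR)\). By hypothesis, \(\jmath^*\upsilon\) and \(\gamma\) are therefore cohomologous on \(\mY(\Rb)\). If necessary, shrink \(\Rb\) slightly to a smaller value so that all forms are defined near the closure of the domain; Lemma \ref{lma:ironing} is stated for \(\mY(R)\) with arbitrary \(R\), so this costs nothing. Now apply Lemma \ref{lma:ironing} to \(\upsilon|_{\mY(\Rb)}\) and \(\gamma\). This yields \(0<\epsilon<\kappa\) and a new form \(\upsilon'\) on \(\mY(\Rb)\) with the following properties: it tames the complex structure; it agrees with \(\upsilon\) near \(\partial\mY(\Rb)\), so it glues to \(\upsilon\) on \(Y\setminus\jmath(\mY(\Rb))\); and on \(\mY(\Sb)\), where \(\Sb\coloneqq \frac{\Rb\epsilon^2a_1}{36\kappa^2a_n}\), it equals
\[r_1\omega^{R_0}_{\mY(\Rb)}+r_2\mg^*\omega_\mZ,\qquad r_2=\kappa^2-\epsilon^2>0.\]

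\textbf{Blowing down and finding the ellipsoid.} This is exactly the hypothesis of Lemma \ref{lma:weighted_blowdown}, with \(\Sb\in(0,\Rb)\). That lemma gives a form \(\omega\) on \(Z\) that tames the complex structure, satisfies \(g^*\omega=\upsilon\) outside \(\jmath(\mY(\Rb))\), and on \(\iota(\mZ(\Sb))\) equals \(r_1\omega^{\ironed}_{\mZ(\Rb)}+r_2\omega_\mZ\). This proves (1) and (2). Choosing \(\delta,\eta\) small enough, Proposition \ref{prp:finding_ellipsoids} shows that \(\mZ(\Sb/2)\) with this form contains \(\Ell(r_1R_0\bm{a}^{-1})=\Ell(R\bm{a}^{-1})\). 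Pushing this forward by \(\iota\) proves (3).

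\textbf{Expected obstacle.} The delicate step is the cohomology matching. Lemma \ref{lma:ironing} requires \(\upsilon\) and \(\gamma\) to be cohomologous on all of \(\mY(\Rb)\), and \(\mY(\Rb)\) is an orbifold rather than a manifold. I would justify the required statement as follows:
\begin{itemize}
\item \(\mY(\Rb)\) retracts equivariantly onto \(\mC\), for example via the \(\CC^\times\)-action \(z_0\mapsto tz_0\) as \(t\to0\) in Cox coordinates;
\item the restriction map \(H^2(\mY(\Rb);\RR)\to H^2(\mC;\RR)\cong\RR\) is therefore an isomorphism;
\item consequently the two classes agree as soon as they agree on \(\mC\), which is the hypothesis combined with the choice \(r_1=R/R_0\).
\end{itemize}
The freedom to choose \(r_1\) is what removes the restriction \(R<\Rb\), as the statement anticipates.
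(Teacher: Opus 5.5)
Your proposal is correct and follows the paper's proof essentially step for step. You fix \(R_0\in(0,\Rb)\), set \(r_1=R/R_0\), iron \(\upsilon\) towards \(r_1\omega^{R_0}_{\mY(\Rb)}\) via Lemma \ref{lma:ironing}, blow down with Lemma \ref{lma:weighted_blowdown}, and extract \(\Ell(R\bm{a}^{-1})\) using Proposition \ref{prp:finding_ellipsoids}. Your retraction argument, showing that \(H^2(\mY(\Rb);\RR)\to H^2(\mC;\RR)\) is an isomorphism, fills in a step the paper leaves implicit: it is what lets agreement of classes on \(\mC\) supply the cohomologous hypothesis of Lemma \ref{lma:ironing}.
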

\begin{proof}
  Choose a \(0<R_0<\Rb\) and let \(r_1\) be such that
  \(r_1R_0=R\). Then
  \(r_1\omega^{R_0}_{\mY(\Rb)}\) is a K\"{a}hler form on
  \(\mY(\Rb)\) whose pullback to \(\mC=\exc(\mg)\) is
  cohomologous to that of \(\jmath^*\upsilon\). Note that we
  can't simply use \(\omega^{R}_{\mY(\Rb)}\) since the
  construction of \(\omega^{R}_{\mY(\Rb)}\) required
  \(R<\Rb\) and we don't know if that holds.
  
  Now we appeal to Lemma \ref{lma:ironing} and iron \(\upsilon\)
  on \(\jmath(\mY(\Rb))\) to get a new taming form
  \(\upsilon'\) such that:
  \begin{itemize}
  \item \(\upsilon'\) coincides with \(\upsilon\) on the
    complement of \(\jmath(\mY(\Rb))\), and
  \item the pullback \(\jmath^*\upsilon'\) has the property
    that, for some \(\Sb\) and \(r_2\), its restriction to
    \(\mY(\Sb)\) coincides with
    \(r_1\omega^{R_0}_{\mY(\Rb)}+r_2\mg^*\omega_\mZ\).
  \end{itemize}
  By Lemma \ref{lma:weighted_blowdown}, we can equip \(Z\) with
  a blown-down symplectic form satisfying (1) and (2) and which
  contains a copy of
  \((\mZ(\Sb/2),r_1\omega^{\ironed}_{\mZ(\Rb)}+r_2\omega_{\mZ})\). By
  Proposition \ref{prp:finding_ellipsoids}, this contains a copy
  of \(\Ell(r_1R_0\bm{a}^{-1})=\Ell(R\bm{a}^{-1})\) as required.
\end{proof}

\begin{corollary}\label{cor:ellipsoids_2}
  Let \(Z\) be a smooth complex projective variety of dimension
  \(n\) equipped with a K\"{a}hler form \(\zeta\), and let
  \(g\colon Y\to Z\) be a weighted blow-up with weights
  \(\bm{a}\) and exceptional locus \(\exc(g)=C\subseteq
  Y\). Suppose that \(Y\) admits a K\"{a}hler form \(\ups\) such
  that:
  \begin{itemize}
  \item the pullback of \(\upsilon\) to \(C=\exc(g)\) is
    cohomologous to the toric symplectic form coming from the
    moment polytope \(\tri_{\bm{a},\,R}\) for some \(R\),
    and
  \item the restrictions of
    \(\ups\) and \(g^*\zeta\) to \(Y\setminus C\) are
    cohomologous.
  \end{itemize}
  Then \((Z,\zeta)\) admits a symplectic embedding of the
  ellipsoid \(\Ell(R\bm{a}^{-1})\).
\end{corollary}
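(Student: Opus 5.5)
The idea is to apply Proposition \ref{prp:ellipsoids_1} to \(\ups\), obtaining a taming form \(\omega\) on \(Z\) containing \(\Ell(R\bm{a}^{-1})\). Then we use a Moser-type argument to show that \((Z,\omega)\) is symplectomorphic to \((Z,\zeta)\).

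\emph{Step 1.} Choose a weighted blow-up chart \(\jmath\colon\mY(\Rb)\to Y\), \(\iota\colon\mZ(\Rb)\to Z\) as in Definition \ref{dfn:weighted_blowup}. Since \(\ups\) is K\"{a}hler, it tames the complex structure. By hypothesis its restriction to \(C\) is cohomologous to the toric form with moment polytope \(\tri_{\bm{a},\,R}\). Proposition \ref{prp:ellipsoids_1} therefore produces a symplectic form \(\omega\) on \(Z\) such that:
\begin{itemize}
\item \(\omega\) tames \(J_Z\);
\item \(g^*\omega=\ups\) outside \(\jmath(\mY(\Rb))\);
\item \((Z,\omega)\) contains a symplectic copy of \(\Ell(R\bm{a}^{-1})\).
\end{itemize}

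\emph{Step 2.} Show that \([\omega]=[\zeta]\) in \(H^2(Z;\RR)\). Since \(\iota(\mZ(\Rb))\) is a contractible neighbourhood of \(p\), excision and the long exact sequence give \(H^2(Z)\cong H^2(Z\setminus p)\) for \(n\geq 2\). Thus it suffices to compare the restrictions to \(Z\setminus p\cong Y\setminus C\).
\begin{itemize}
\item On \(Y\setminus C\), \(g^*\zeta\) is cohomologous to \(\ups\) by hypothesis.
\item \(g^*\omega\) agrees with \(\ups\) outside \(\jmath(\mY(\Rb))\).
\item \(Y\setminus C\) deformation retracts onto \(Y\setminus\jmath(\mY(\Rb))\), because \(\jmath(\mY(\Rb))\setminus C\) is a collar \(\partial\mY(\Rb)\times(0,1]\).
\end{itemize}
Hence \([g^*\omega]=[\ups]=[g^*\zeta]\) on \(Y\setminus C\), and so \([\omega]=[\zeta]\).

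\emph{Step 3.} Consider the linear path \(\omega_t=(1-t)\zeta+t\omega\). Each \(\omega_t\) tames \(J_Z\), since taming is a convex condition and \(\zeta\) is K\"{a}hler, so \(\zeta\) tames \(J_Z\). Hence every \(\omega_t\) is nondegenerate, and all of them are cohomologous. Moser's theorem on the closed manifold \(Z\) gives a diffeomorphism \(\phi\) with \(\phi^*\omega=\zeta\). Composing the ellipsoid embedding into \((Z,\omega)\) with \(\phi^{-1}\) gives the required embedding into \((Z,\zeta)\).

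\emph{Main obstacle.} The delicate point is Step 2, namely the cohomology comparison across the collar. One must make sure that the ironing inside Proposition \ref{prp:ellipsoids_1} does not change classes away from \(p\). This holds because all modifications there are supported in \(\jmath(\mY(\Rb))\) and are exact, as in Lemma \ref{lma:ironing}(1). One must also check that removing the point \(p\) loses no degree-\(2\) information. A smaller point is that Proposition \ref{prp:ellipsoids_1} is stated for \(\ups\) a symplectic form on the orbifold \(Y\); I will take the orbifold K\"{a}hler \(\ups\) as given, as in the remark on orbi-ampleness.
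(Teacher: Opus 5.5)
Your proposal is correct and is essentially the paper's proof. You apply Proposition \ref{prp:ellipsoids_1} to \(\ups\), check that \([\omega]=[\zeta]\) by comparing restrictions away from the blow-up region, and then run Moser along the tame linear path \((1-t)\zeta+t\omega\). Where the paper only says ``the ellipsoid is contractible,'' your argument that \(H^2(Z)\cong H^2(Z\setminus p)\) for \(n\geq 2\) makes this step precise. Your worry about the ironing is unnecessary: property (2) of Proposition \ref{prp:ellipsoids_1}, which you already use in Step 2, covers it.
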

\begin{proof}
  By Proposition \ref{prp:ellipsoids_1}, \(Z\) admits a
  symplectic form \(\omega\) taming the complex structure such
  that \((Z,\omega)\) contains a copy of the desired
  ellipsoid. But since \(\zeta\) and \(\omega\) both tame the
  same complex structure, the 2-forms \(t\omega+(1-t)\zeta\) are
  all symplectic. Their restrictions to the complement of the
  ellipsoid where the blow-up is happening are cohomologous by
  assumption, and the ellipsoid is contractible, so they are
  cohomologous on \(Z\). We can then apply Moser's trick to this
  path of symplectic forms; we see that they are
  symplectomorphic. Therefore \((Z,\omega)\) also contains a
  copy of \(\Ell(R\bm{a}^{-1})\).
\end{proof}

We finally explain how Theorem \ref{thm:ellipsoids_galore}
follows from Corollary \ref{cor:ellipsoids_2}.

\begin{proof}[Proof of Theorem \ref{thm:ellipsoids_galore}]
  If \(\varepsilon<\varepsilon(Z,D;g)\) then
  \(g^*D-\varepsilon C\) is an ample \(\QQ\)-divisor on \(Y\),
  where \(C=\exc(g)\). Let \(\upsilon\) be the associated
  K\"{a}hler form Poincar\'{e} dual to \(\pi(g^*D-\epsilon
  C)\). Certainly \(\upsilon\) and \(g^*\zeta\) are cohomologous
  on the complement of \(C\). The cohomology class of
  \(\upsilon\) restricted to a neighbourhood of \(C\) is
  Poincar\'{e}-dual to \(-\pi\epsilon C\). The divisor \(C\) is
  isomorphic to the weighted projective space \(\PP(\bm{a})\);
  this is the normal toric variety associated to the fan of
  inward normals of the polytope \(\tri_{\bm{a},\,R}\) for any
  \(R\). This polytope has \(\binom{n}{2}\) edges
  \(\edge_{ij}\), \(1\leq i,j\leq n\), of affine length
  \(\frac{R}{2\lcm(a_i,a_j)}\), and corresponding toric boundary
  curves \(T_{ij}\). Working in the local model for the toric
  weighted blow-up, Lemma \ref{lma:C_intersection} implies that
  \(C\cdot T_{ij} = - \frac{1}{\lcm(a_i,a_j)}\). The cohomology
  class \(-\pi\epsilon C\) of the symplectic form on this
  neighbourhood is determined by its integral over any of the
  curves \(T_{ij}\), namely
  \[\int_{T_{ij}}\upsilon = \frac{\pi\epsilon}{\lcm(a_i,a_j)}.\]
  If we take \(R=\epsilon\prod_{k=1}^na_k\) and consider the
  toric symplectic form \(\upsilon_{\toric}\) on
  the local model of the toric weighted blow-up, then we get
  \[\int_{T_{ij}}\upsilon_{\toric}=
    \frac{2\pi R}{2\lcm(a_i,a_j)}\] This means that, upon
  restricting to a neighbourhood of \(C\),
  \[[\upsilon] = \frac{\epsilon}{R}[\upsilon_{\toric}],\] so
  these cohomology classes match precisely when
  \(R=\epsilon\). The result then follows from Corollary
  \ref{cor:ellipsoids_2}.
\end{proof}

\section{Weighted Seshadri constants for surfaces}
\label{sct:surfaces}
We now focus on the case of complex surfaces. We will free up
some subscripts by switching from writing \(\bm{a}=(a_1,a_2)\)
to writing \(\bm{a}=(\alpha,\beta)\) for the weights of a
weighted blow-up.

\subsection{Local model}

Let \(\mg\colon\mY\to\mZ\) be the toric weighted blow-up of
\(\mZ=\CC^2\) at the origin with weights
\((\alpha,\beta)\). This is associated to the fan \(\Sigma\)
with three rays:
\[\RR_{\geq 0}(\alpha,\beta),\qquad \RR_{\geq 0}(0,1),\qquad
  \RR_{\geq 0}(1,0).\] The fan \(\Sigma\) is the inward normal
fan to the polygon (see Figure \ref{fig:fan_and_polygon}):
\[\Polytope_\mY(R_0)=\{(\mm_1,\mm_2)\in\RR^2_{\geq 0}\,:\,
  \alpha\mm_1+\beta\mm_2\geq R_0/2\}.\]

\begin{figure}[htb]
  \centering
  \begin{tikzpicture}
    \node at (-1,3) {\(\Sigma'\)};
    \draw (0,0) -- (3,0);
    \draw (0,0) -- (0,3);
    \draw[->] (0,0) -- (1,0) node [below] {\(\rho_0\)};
    \draw[->] (0,0) -- (0,1) node [left] {\(\rho_{m+1}\)};
    \draw[->] (0,0) -- (2,3) node [above] {\(\rho_{i_1}\)};
    \draw (0,0) -- (1,1) node [right] {\(\rho_1\)};
    \draw[dashed] (0,0) -- (2,2.5) node [right] {\(\vdots\)};
    \draw[dashed] (0,0) -- (1.5,3);
    \draw[draw=none] (0.7,2.4) -- (2,3.4) node[above,midway,sloped] {\(\cdots\)};
    \draw (0,0) -- (0.6,2) node [above] {\(\rho_m\)};
    \begin{scope}[shift={(5,0)}]
      \node at (-1,3) {\(\Sigma\)};
      \draw (0,0) -- (3,0);
      \draw (0,0) -- (0,3);
      \draw[->] (0,0) -- (1,0) node [below] {\(\bm{e}_1\)};
      \draw[->] (0,0) -- (0,1) node [left] {\(\bm{e}_2\)};
      \draw[->] (0,0) -- (2,3) node [above] {\(\bm{a}=(\alpha,\beta)\)};
    \end{scope}
    \begin{scope}[shift={(10,0)}]
      \node at (-1,3) {\(\Polytope_{\mY(R_0)}\)};
      \fill[lightgray] (0,3) -- (0,1) -- (1.5,0) -- (3,0) -- (3,3) -- cycle;
      \draw (0,3) -- (0,1) -- (1.5,0) -- (3,0);
      \node (pt) at (0,1) {\(\bullet\)};
      \node (qt) at (1.5,0) {\(\bullet\)};
      \node at (pt) [left] {\(\pt\)};
      \node at (qt) [below] {\(\qt\)};
      \node at (0.75,0.5) [below left] {\(\tri_{\bm{a},\,R_0}\)};
    \end{scope}
  \end{tikzpicture}
  \caption{The polygon \(P_{\mY}(R_0)\), its inward normal fan
    \(\Sigma\), and the subdivision \(\Sigma'\) of \(\Sigma\)
    corresponding to the minimal resolution
    \(\mf\colon \mX\to \mY\).}\label{fig:fan_and_polygon}
\end{figure}

The associated toric surface \(\mY\) has (up to) two orbifold
points living over the vertices of \(P_\mY(R_0)\) and contains a
curve \(\mC\) connecting these points. The self-intersection of
\(\mC\) is \(\mC^2 = -1/(\alpha\beta)\) by Lemma
\ref{lma:C_intersection}.

It will sometimes be convenient to work on the minimal
resolution \(\mf\colon \mX\to \mY\). This is the toric surface
associated to a subdivision \(\Sigma'\) of \(\Sigma\), see
{\cite[Section 2.6]{Fulton}}. This fan has rays
\(\rho_0,\rho_1,\ldots,\rho_{m+1}\), ordered anticlockwise,
where \(\rho_0=\RR_{\geq 0}\bm{e}_1\),
\(\rho_{i_1}=\RR_{\geq 0}\bm{a}\) and
\(\rho_{m+1}=\RR_{\geq 0}\bm{e}_2\) (see Figure
\ref{fig:fan_and_polygon}). The rays \(\rho_1,\ldots,\rho_m\)
correspond to embedded rational curves \(\mE_1,\ldots,\mE_m\) in
\(\mX\). Write \(\bm{u}_i=(\alpha_i,\beta_i)\) for the primitive
integer vector pointing along the ray \(\rho_i\) and recall the
operation \(\bm{x}\wedge\bm{y}=x_1y_2-x_2y_2\) on vectors
\(\bm{x},\bm{y}\in\RR^2\). We have
\(\bm{u}_i\wedge \bm{u}_{i+1} = 1\) for \(i=0,\ldots,m\) and
\(\mE_i^2 = -b_i\) where \(b_i=\bm{u}_{i-1}\wedge \bm{u}_{i+1}\)
for \(i=1,\ldots,m\). We have
\begin{gather}\label{eq:alphas_betas}
  \frac{\beta_i}{\beta_{i-1}}=[b_{i-1},\ldots,b_1],\qquad
  \frac{\alpha_i}{\alpha_{i+1}}=[b_{i+1},\ldots,b_m],\\
  \nonumber \text{where
  }[a,b,c,\ldots]=a-\frac{1}{b-\frac{1}{c-\cdots}}.
\end{gather}
Note that the proper transform \(\mf^{-1}_*\mC\) is
\(\mE_{i_1}\) and
\((\alpha_{i_1},\beta_{i_1})=(\alpha,\beta)\). The curves
\(E_1,\ldots,E_{i_1-1}\) (respectively \(E_{i_1+1},\ldots,E_m\))
project to the point labelled \(\pt\) (respectively \(\qt\)) in
Figure \ref{fig:fan_and_polygon}.

\begin{remark}
  As usual, we will use script/nonscript letters to denote the
  local/non-local versions of this, so if \(g\colon Y\to Z\) is
  a weighted blow-up and \(f\colon X\to Y\) is its minimal
  resolution then we will write \(E_1,\ldots,E_m\) and \(C\) for
  the exceptional curves of \(h\coloneqq g\circ f\) and \(g\)
  respectively, and \(E_{i_1}=f^{-1}_*C\).
\end{remark}

\begin{figure}[htb]
  \centering
  \begin{tikzpicture}
    \node (a) at (0,0) {\(\bullet\)};
    \node (b) at (1,0) {\(\bullet\)};
    \node (c) at (2,0) {\(\cdots\)};
    \node (d) at (3,0) {\(\bullet\)};
    \node (e) at (4,0) {\(\cdots\)};
    \node (f) at (5,0) {\(\bullet\)};
    \draw (a) -- (b);\draw (b) -- (c); \draw (c) -- (d);
    \draw (d) -- (e); \draw (e) -- (f);
    \node at (a) [above] {\(E_1\)};
    \node at (b) [above] {\(E_2\)};
    \node at (d) [above] {\(E_{i_1}\)};
    \node at (f) [above] {\(E_m\)};
    
    \node at (a) [below] {\(-b_1\)};
    \node at (b) [below] {\(-b_2\)};
    \node at (d) [below] {\(-b_{i_1}\)};
    \node at (f) [below] {\(-b_m\)};
  \end{tikzpicture}
  \caption{The exceptional curves of the minimal resolution of a
    weighted blow-up, with their self-intersections below. The
    curve \(E_{i_1}\) is the proper transform of
    \(C\).}\label{fig:resolution_chain}
\end{figure}

\begin{remark}\label{rmk:minus_1}
  Note that since \(f\colon X\to Y\) is a minimal resolution, we
  have \(E_i^2\leq -2\) for all \(i\neq i_1\). Since \(Z\) is
  smooth at \(p\), \(h=g\circ f\) is {\em not} a minimal
  resolution (there's nothing to resolve) which means that
  \(E_{i_1}^2=-1\).
\end{remark}

\subsection{The Farey tree}
\label{sct:farey}

Even if \(Z\) is toric and \(p\) is a toric fixed point, not all
weighted blow-ups at \(p\) are globally toric: there is a lot of
freedom in forming weighted blow-ups, and, in this section, we
will discuss the moduli space of weighted blow-ups at
\(p\).

\begin{discussion}
  Since \(E_{i_1}\) is a \(-1\)-curve (Remark
  \ref{rmk:minus_1}), we may contract it. Then one of the
  neighbouring curves \(E_{i_2}\) must become a
  \(-1\)-curve. Contracting \(E_{i_2}\) and continuing in this
  manner, we contract all the remaining curves in some order
  \(E_{i_3},\ldots,E_{i_m}\) until we get back to
  \(Z\). Reversing this argument, we see that any weighted
  blow-up can be obtained by iteratedly blowing up at points
  infinitely near to \(p\): write \(F_1=E_{i_m}\) for the first
  exceptional curve, then blow up a point on \(F_1\) with
  exceptional curve \(F_2=E_{i_{m-1}}\), then blow up a point on
  \(F_2\) with exceptional curve \(F_3=E_{i_{m-2}}\), and so
  on. Provided we end up with a linear chain of curves, we will
  call such a blow-up process a {\em Farey process}.

  Ensuring the curves remain in a linear chain constrains which
  blow-ups we can perform. We call the intersection points
  between curves in the chain {\em satellite points}. If the
  most recent exceptional curve \(F_i\) has two satellite points
  \(p_i\) and \(q_i\) then, in order that our exceptional set
  remains a linear chain, we can only blow up at a satellite
  point. If, however, \(F_i\) has only one satellite point then
  we can either blow up at this satellite point \(p_i\) or else
  at any other ({\em free}) point \(q_i\) of \(F_i\). Once we
  have blown up at a satellite point, we encounter no further
  free points, but there is always a choice between two
  satellite points. We think of this as a decision tree (see
  Figure \ref{fig:farey}).
\end{discussion}

\begin{figure}[htb]
  \centering
  \begin{tikzpicture}
    \begin{scope}[shift={(0,0)}]
      \draw (-0.2,0) -- (1.2,0.5) node [pos=0.3] (a) {};
      \node at (-0.2,0.25) {\(F_1\)};
      \node at (a) [below] {\(-2\)};
      \draw[very thick] (0.8,0.5) -- (2.2,0) node [pos=0.7] (b) {};
      \node at (b) [above] {\(F_2\)};
      \node at (1.5,0.3) [below left] {\(-1\)};
      \node (p2) at (1,0.4) {\(\bullet\)};
      \node (q2) at (2,0.05) {\(\circ\)};
      \node at (1,0.45) [above] {\(p_2\)};
      \node at (2,0.05) [below] {\(q_2\)};
    \end{scope}
    \begin{scope}[shift={(2.5,2)}]
      \draw (-0.2,0) -- (1.2,0.5) node [pos=0.3] (c1) {};
      \node at (c1) [above] {\(F_1\)};
      \node at (c1) [below] {\(-2\)};
      \draw (0.8,0.5) -- (2.2,0) node [pos=0.6] (c2) {};
      \node at (c2) [above] {\(F_2\)};
      \node at (c2) [below left] {\(-2\)};
      \draw[very thick] (1.8,0) -- (3.2,0.5) node [pos=0.5] (c3) {};
      \node at (c3) [above] {\(F_3\)};
      \node at (c3) [below right] {\(-1\)};
      \node (p3b) at (2,0.05) {\(\bullet\)};
      \node at (3,0.42) {\(\circ\)};
      \node at (2,0.05) [below] {\(p_3\)};
      \node (q3b) at (3,0.45) [above] {\(q_3\)};
    \end{scope}
    \begin{scope}[shift={(-3,2)}]
      \draw (-0.2,0) -- (1.2,0.5) node [pos=0.3] (d1) {};
      \draw[very thick] (0.8,0.5) -- (2.2,0) node [pos=0.6] (d2) {};
      \draw (1.8,0) -- (3.2,0.5) node [pos=0.5] (d3) {};
      \node at (d1) [above] {\(F_1\)};
      \node at (d1) [below] {\(-3\)};
      \node at (d2) [above] {\(F_3\)};
      \node at (d2) [below left] {\(-1\)};
      \node at (d3) [above] {\(F_2\)};
      \node at (d3) [below right] {\(-2\)};
      \node at (1,0.4) {\(\bullet\)};
      \node (q3a) at (2,0.05) {\(\bullet\)};
      \node (p3a) at (1,0.4) [above] {\(p_3\)};
      \node at (2,0.05) [below] {\(q_3\)};
    \end{scope}
    \draw[->] (d2) to[out=-90,in=180] (p2);
    \node at (-1,1) [left] {satellite};
    \draw[->] (c3) to[out=-90,in=0] (q2);
    \node at (4.7,1) [right] {free};
    \draw[dashed,->] (-3,4) -- (p3a) node [pos=0.5,left] {sat.};
    \draw[dashed,->] (-0.5,4) -- (q3a) node [pos=0.3,right] {sat.};
    \draw[dashed,->] (4,4) -- (p3b) node [pos=0.3,left] {sat.};
    \draw[dashed,->] (6,4) -- (q3b) node [midway,right] {free};
    \begin{scope}[shift={(0.5,-2)}]
      \draw[very thick] (-0.2,0) -- (1.2,0) node [pos=0.4] (f1) {};
      \node at (f1) [above] {\(F_1\)};
      \node at (f1) [below] {\(-1\)};
      \node (q1) at (0.8,0) {\(\circ\)};
    \end{scope}
    \draw[->] (1.5,0.15) -- (q1) node [midway,right] {free};
    \begin{scope}[shift={(9,-2)}]
      \node (11) at (-1,0) {\((1,1)\)};
      \node (21) at (0,2) {\((1,2)\)};
      \node (31) at (1,4) {\((1,3)\)};
      \node (32) at (-1,4) {\((2,3)\)};
      \draw[->] (21) -- (11);
      \draw[->] (31) -- (21);
      \draw[->] (32) -- (21);
      \draw[dashed,->] (-2,6) -- (32);
      \draw[dashed,->] (-1,6) -- (32);
      \draw[dashed,->] (1,6) -- (31);
      \draw[dashed,->] (2,6) -- (31);
    \end{scope}
  \end{tikzpicture}
  \caption{Left: The choices involved in making a weighted
    blow-up. Satellite points are drawn solid \(\bullet\), free
    points as empty circles \(\circ\). The weighted blow-up is
    obtained by contracting all the curves \(F_i\) except the
    final one (drawn thickly). Right: The Farey tree.}
  \label{fig:farey}
\end{figure}

\begin{definition}[Farey tree]
  If we label the nodes of this decision tree with the weights
  of the corresponding weighted blow-up, we obtain the
  well-known {\em Farey tree} of primitive integer vectors; see
  Figure \ref{fig:farey}. The easiest way to reconstruct these
  labels is to first label each node by {\em three} integer
  vectors \(\bm{u},\bm{v},\bm{w}\), starting with the root which
  is labelled \((1,0),(1,1),(0,1)\). Moving up from the node
  \(\bm{u},\bm{v},\bm{w}\), if we move rightward in the decision
  tree, the next node is labelled
  \(\bm{v},\bm{v}+\bm{w},\bm{w}\) and if we move leftward then
  the next node is labelled
  \(\bm{u},\bm{u}+\bm{v},\bm{v}\). Then if a node is labelled
  \(\bm{u},\bm{v},\bm{w}\), we omit the \(\bm{u}\) and
  \(\bm{w}\) and just keep the label \(\bm{v}\). Here, we think
  of the first step as a rightward step, so the first node above
  the root is labelled \((1,2)\).
\end{definition}

\begin{remark}
  We will always write our chains \(E_1,\ldots,E_m\) from left
  to right and think of satellite points being ordered with
  \(p_i\) on the left and \(q_i\) on the right (so
  \(p_i\in E_{k-1}\cap E_k\) and \(q_i\in E_k\cap E_{k+1}\) for
  some \(k\)).
\end{remark}

\begin{definition}\label{dfn:guided}
  The choices of free points in a Farey process can be fixed
  ahead of time by picking the germ of an irreducible analytic
  curve \(A\) passing through \(p\). At the first step, we blow
  up the intersection of \(F_1\) with the proper transform of
  \(A\). At the second step, we take \(q_2\) to be the unique
  intersection point \(A\cap F_2\). If we blow up at \(q_2\), we
  can take \(q_3\) to be the unique intersection point
  \(A\cap F_3\), and so on. We call this the {\em Farey process
    guided by \(A\)}.
\end{definition}

\begin{remark}\label{rmk:guided_exists}
  Given a Farey process, we can always choose the analytic curve
  germ\footnote{This germ is purely local: it does not need to
    be part of a closed holomorphic curve in \(X\).} \(A\) at
  the end passing transversely through a non-satellite point of
  the right-most curve \(E_m\) in the chain. Contracting down to
  \(Z\) we obtain a smooth, irreducible curve germ whose guided
  Farey process coincides with the given process.
\end{remark}

\begin{example}\label{exm:guided_wb}
  \begin{itemize}
  \item[(1)] The toric weighted blow-up is guided by (either)
    one of the toric boundary components passing through \(p\).
  \item[(2)] Let \(Z=\CP^2\), let \(p=[0:0:1]\), and take
    \(A\) to be the nodal cubic curve
    \(\{[x:y:z]\,:\,x^3+y^3=xyz\}\). Under the Farey process of
    weight \((\alpha,\beta)\) guided by (either) one of the
    branches of \(A\), the proper transform of \(A\) is
    always an embedded, irreducible rational curve which
    intersects \(E_1\) and \(E_m\) once transversely. This fails
    to be toric if \(\beta/\alpha>2\): if we move right once in
    the Farey tree then the point \(q_2\) where the proper
    transform of \(A\) intersects \(F_2\) is not a toric
    fixed point, and the blow-up at \(q_2\) is not toric.
  \end{itemize}
\end{example}

\begin{lemma}\label{lma:local_model}
  Suppose \(Z\) is a smooth complex projective surface and let
  \(g\colon Y\to Z\) be a morphism of projective surfaces whose
  exceptional locus is an irreducible rational curve \(C\). Let
  \(f\colon X\to Y\) be the minimal resolution. If the
  exceptional locus of \(h=g\circ f\) is a chain of curves
  \(E_1,\ldots,E_m\) then \(g\) is a weighted blow-up.
\end{lemma}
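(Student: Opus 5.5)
We have to show that \(g\) is locally analytically modelled on the toric weighted blow-up \(\mg\colon\mY\to\mZ\) for a suitable \(\bm{a}=(\alpha,\beta)\), and that \(Y\) is the blow-up of an ideal supported at \(p=g(C)\). The second point is automatic from Setup \ref{pg:setup} (via \cite[Chapter II, Theorem 7.17]{Hartshorne}), so the content is the local analytic model. The plan is to compare \(X\) with a toric model. First, \(E_1,\ldots,E_m\) form a chain, \(f\) is a minimal resolution of cyclic quotient singularities, and \(h=g\circ f\) is a proper birational morphism to the smooth surface \(Z\) with connected exceptional locus over \(p\). By Castelnuovo and Zariski factorisation, \(h\) is therefore a sequence of point blow-ups at infinitely-near points over \(p\). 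Since \(f\) is minimal, every \(E_i\) with \(i\neq i_1\) has \(E_i^2\leq -2\), so \(E_{i_1}=f^{-1}_*C\) is the unique \((-1)\)-curve, as in Remark \ref{rmk:minus_1}. Contracting successively, exactly as in the Discussion of Section \ref{sct:farey}, realises \(h\) as a Farey process, because every intermediate exceptional set is still a linear chain.

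Next, I would use Remark \ref{rmk:guided_exists} to choose a smooth curve germ \(A\) at \(p\) such that the process is guided by \(A\). I would pick local analytic coordinates \((x,y)\) at \(p\) in which \(A=\{y=0\}\). The decision-tree structure then matches the Farey tree, which is what pins down the weight. Where the process moves to a satellite point, the blow-up is toric in the coordinates inherited from the chain. Where it moves to a free point, guidance by \(A=\{y=0\}\) means we blow up the intersection with the proper transform of a coordinate axis, and this is again a torus-fixed point. By induction on the number of blow-ups, the whole Farey process is the pullback, under the coordinate chart \(\iota\colon\mZ(R)\to Z\), of a toric sequence of blow-ups of \(\CC^2\) whose fan is \(\Sigma'\). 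The label of the final node of the Farey tree is a primitive vector \(\bm{a}=(\alpha,\beta)\), and the ray \(\rho_{i_1}\) corresponds to \(E_{i_1}\).

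To finish, I would contract every \(E_i\) with \(i\neq i_1\) in the toric model. This produces the toric surface of the fan \(\Sigma\), namely \(\mY\), together with \(\mg\). On the global side, the same curves are contracted by \(f\). Contractions of negative-definite configurations are unique (Grauert, or normality of \(Y\) together with Zariski's main theorem), so \(\jmath\colon\mY(R)\to Y\) exists and makes the diagram of Definition \ref{dfn:weighted_blowup} commute.

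The main obstacle is the inductive claim that each free-point blow-up can be made toric in a single fixed coordinate system. The choices must be coherent: the guiding germ \(A\) has to be fixed once and for all, and straightened to a coordinate axis, before any blow-ups are performed. One must then check that after each satellite blow-up the proper transform of \(A\) still meets the newest curve at the toric fixed point not lying on the other chain members. I would verify this by tracking the proper transform of \(\{y=0\}\) through the toric charts.
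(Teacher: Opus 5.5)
Your proposal follows the paper's own argument: you realise \(h\) as a Farey process by contracting \((-1)\)-curves, choose a smooth guiding germ \(A\) via Remark~\ref{rmk:guided_exists}, straighten it to \(\{y=0\}\), and observe that this germ guides the toric weighted blow-up; your explicit identification of the two contractions, via normality of \(Y\), fills in a step the paper leaves implicit. One correction to your final paragraph: after the first satellite blow-up there are no further free points and the proper transform of \(A\) no longer meets the newest curve, so the check you actually need is that during the free phase the proper transform of \(\{y=0\}\) meets each new curve at its torus-fixed point, after which every centre is a satellite point and hence automatically torus-fixed.
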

\begin{proof}
  We can contract \(-1\)-curves on \(X\) one at a time starting
  with \(f^{-1}_*C\) until we get to \(Z\), so \(X\) is obtained
  from \(Z\) by a Farey process. Let \(A\) be a smooth curve
  germ through \(p\) guiding this Farey process as in Remark
  \ref{rmk:guided_exists}. Since \(A\) is smooth, in suitable
  local analytic coordinates \(\iota\colon \mZ(R)\to Z\) we can
  take it to be \(y=0\). But since this is the toric boundary in
  \(\mZ(R)\), this guides the toric blow-up of \(\mZ(R)\).
\end{proof}

\begin{remark}
  In fact, we do not need to assume that \(Y\) is projective:
  since \(X\) is obtained by iterated blow-up from \(Z\), we see
  that \(X\) is projective. Now let \(D\) be an ample divisor on
  \(Z\). The divisor \(L=f^*(g^*D-\varepsilon C)\) satisfies
  \(L\cdot E_j=0\) for \(j\neq i_1\) and
  \(L\cdot E_{i_1}=\varepsilon/(\alpha\beta)>0\), so for large
  \(N\), the linear system \(|NL|\) defines a morphism to
  \(\PP(|NL|)\) which factors as
  \(X\stackrel{f}{\to} Y\stackrel{i}{\to}\PP(|NL|)\) for some
  projective embedding \(i\) of \(Y\).
\end{remark}

\begin{remark}\label{rmk:quasimonomial}
  If \(Z=\CP^2\) and we are {\em given} an Zariski-open affine
  coordinate chart (not just analytic) then we can rotate so
  that the tangent space of any smooth guiding curve \(A\) at
  \(p=[0:0:1]\) is \(y=0\) so that the germ of \(A\) is
  equivalent to the curve \(y=\xi(x)\) for some analytic
  function \(\xi\).
\end{remark}

\begin{remark}
  We will usually be more interested in ``sufficiently general''
  Farey processes, where any free choice is made at a
  ``sufficiently general'' point of \(F_i\). Recall that the
  divisor \(F_m\subseteq X\) gives a divisorial valuation on the
  function field of \(Z\), and there is a notion of ``very
  general quasimonomial valuation'' (see \cite{DumnickiEtAl}),
  which means that the coefficients of the power series
  \(\xi(x)\) from Remark \ref{rmk:quasimonomial} are chosen to
  lie in the complement of a countable union of algebraic
  sets. In practice, however, the following easily checkable
  condition will suffice.
\end{remark}
\begin{definition}\label{dfn:general_adapted}
  Consider an Farey process whose result is
  \(X \xrightarrow{f} Y\xrightarrow{g} Z\) and let
  \(h=g\circ f\). Given a rational curve \(A\subseteq Z\) we write
  \(\dbtilde{A}=h^{-1}_*A\) for its proper transform. We say
  that this Farey process is {\em general with respect to \(A\)}
  if \(\dbtilde{A}\) is an embedded rational curve with
  \(\dbtilde{A}^2\geq -1\). We say it is {\em adapted to \(A\)}
  if it is general with respect to \(A\) and
  \(\dbtilde{A}^2=-1\).
\end{definition}

\begin{remark}\label{rmk:general_adapted}
  If we were to deform the arbitrary choices of blow-up points
  slightly then the homology class \([\dbtilde{A}]\) would still
  contain an embedded rational curve, so this condition is stable
  under small perturbations, hence ``general''. Adaptedness is a
  condition that will be useful when we are trying to maximise the
  weighted Seshadri constant \(\varepsilon(Z,A;g)\).
\end{remark}

\begin{example}\label{exm:nodal_cubic}
  Let \(A\) be a nodal cubic curve from Example
  \ref{exm:guided_wb}(2). If we perform a Farey process guided
  by \(A\) then the self-intersection of the curve
  \(\dbtilde{A}\) is equal to \(6-r\) where \(r\) is the number
  of times we move right in the Farey tree before moving
  left. Equivalently, this Farey process is general with respect
  to \(A\) provided \(\beta/\alpha\leq 7\) and adapted to \(A\)
  if \(6<\beta/\alpha\leq 7\). By contrast, the toric Farey
  process is adapted to \(A\) if and only if
  \(\beta/\alpha\leq 2\).
\end{example}

\subsection{Computing weighted Seshadri constants}

\begin{lemma}\label{lma:positive_gammas}
  Let \(X\stackrel{f}{\to} Y\stackrel{g}{\to} Z\) be the minimal
  resolution \(f\) of a weighted blow-up \(g\) of \(Z\) at a smooth
  point \(p\). Let \(C=\exc(g)\) be the exceptional locus. Define the
  numbers \(\gamma_i\) for \(i=1,\ldots,m\) by
  \begin{equation}\label{eq:gammas}
    f^*C=f^{-1}_*C+\sum_{i\neq i_1}\gamma_iE_i=\sum_{i=1}^m\gamma_iE_i,
  \end{equation}
  with \(\gamma_{i_1}=1\). The numbers \(\gamma_i\) are all
  positive. Indeed, they are given by
  \begin{equation}\label{eq:gammas_formula}
    \gamma_i=\begin{cases}\beta_i/\beta_{i_1}&\text{if
    }i<i_1,\\ 1& \text{if }i=i_1,\\\alpha_i/\alpha_{i_1}&\text{if
    }i>i_1,\end{cases}
  \end{equation}
  where \(\alpha_i\) and \(\beta_i\) are defined by Equation
  \eqref{eq:alphas_betas}.
\end{lemma}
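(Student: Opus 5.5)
The pullback \(f^*C\) of the \(\QQ\)-Cartier divisor \(C\) is characterised by \(f^*C\cdot E_j=0\) for every \(f\)-exceptional curve \(E_j\) (\(j\neq i_1\)), together with the normalisation that the coefficient of the proper transform \(E_{i_1}\) equals \(1\). This follows from the push-pull formula \eqref{eq:push_pull}. The plan is to verify that the proposed coefficients \eqref{eq:gammas_formula} satisfy these linear equations, and to use that the intersection matrix of \(E_j\), \(j\neq i_1\), is negative definite, so the solution is unique. Positivity is then immediate, since all \(\alpha_i,\beta_i\) are positive: every ray \(\rho_i\) with \(1\le i\le m\) lies strictly inside the positive quadrant.

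The cleanest way to check the equations is toric. Everything is local near \(\mC\), so we may work in the model \(\mX\to\mY\to\mZ\). There \(f^*\) of a torus-invariant \(\QQ\)-Cartier divisor is computed from its support function. The Cartier data of \(\mC\) on the cone \(\sigma_2\), spanned by \(\bm{e}_1\) and \(\bm{a}\), is a linear function \(\varphi\) with \(\varphi(\bm{e}_1)=0\) and \(\varphi(\bm{a})=-1\), namely \(\varphi(x,y)=-y/\beta\). On \(\sigma_1\), spanned by \(\bm{a}\) and \(\bm{e}_2\), it is \(\varphi(x,y)=-x/\alpha\).

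The coefficient of the divisor \(\mE_i\) in \(\mf^*\mC\) is \(-\varphi(\bm{u}_i)\). This gives \(\beta_i/\beta\) for rays in \(\sigma_2\), i.e. \(i<i_1\), and \(\alpha_i/\alpha\) for \(i>i_1\). Since \((\alpha_{i_1},\beta_{i_1})=(\alpha,\beta)\), this is exactly \eqref{eq:gammas_formula}. It is consistent with the ordering convention, since rays \(\rho_0,\dots,\rho_{i_1}\) lie between \(\bm{e}_1\) and \(\bm{a}\). I would cite \cite{CoxLittleSchenck} for the fact that pullback of Cartier divisors corresponds to composing with the same piecewise-linear support function on the refined fan, after clearing denominators by \(\alpha\beta\) as in Lemma~\ref{lma:C_intersection}.

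As a cross-check, or as an alternative argument avoiding support functions, I would verify directly that \(\sum_k\gamma_kE_k\cdot E_j=0\) for \(j\neq i_1\). For \(j<i_1\) this reads \(\gamma_{j-1}-b_j\gamma_j+\gamma_{j+1}=0\), which is \(\bm{u}_{j-1}+\bm{u}_{j+1}=b_j\bm{u}_j\) in the second coordinate divided by \(\beta\). The relation \(\bm{u}_{j-1}+\bm{u}_{j+1}=b_j\bm{u}_j\) follows from \(\bm{u}_{j-1}\wedge\bm{u}_j=\bm{u}_j\wedge\bm{u}_{j+1}=1\), and with \(\beta_0=0\) it handles the end \(j=1\). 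Similarly, for \(j>i_1\) we use first coordinates, with \(\alpha_{m+1}=0\). The neighbours of \(i_1\) work because \(\gamma_{i_1}=1=\beta_{i_1}/\beta=\alpha_{i_1}/\alpha\), so the two formulas agree at \(i_1\).

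The main obstacle is only bookkeeping. One must confirm that the global \(E_i\) and \(C\) have the same intersection numbers as the local model, which holds via the analytic embedding \(\jmath\) of Definition~\ref{dfn:weighted_blowup}. One must also check the endpoint conventions \(\beta_0=0\) and \(\alpha_{m+1}=0\) against \eqref{eq:alphas_betas}.
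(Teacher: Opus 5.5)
Your proposal is correct, but it takes a different route from the paper.

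The paper sets up the linear system \(M\bm{\gamma}=\bm{\phi}\) for the whole chain \(E_1,\ldots,E_m\), including \(E_{i_1}\). Here \(\bm{\phi}\) has a single nonzero entry \(\phi_{i_1}=C^2\). The paper then reads \(\bm{\gamma}\) off as \(C^2\) times the \(i_1\)th column of \(M^{-1}\), using the closed formula \(M^{-1}_{ij}=-\beta_i\alpha_j\) for \(i\le j\), quoted from Orevkov--Zaidenberg. The normalisation \(\gamma_{i_1}=1\) then gives \(C^2=-1/(\alpha\beta)\) as a by-product.

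You instead identify \(\gamma_i\) as \(-\varphi(\bm{u}_i)\), where \(\varphi\) is the piecewise-linear support function of \(\mC\): it equals \(-y/\beta\) on \(\sigma_2\) and \(-x/\alpha\) on \(\sigma_1\). You cross-check by verifying \(f^*C\cdot E_j=0\) for \(j\neq i_1\) directly from \(\bm{u}_{j-1}+\bm{u}_{j+1}=b_j\bm{u}_j\). Uniqueness then comes from negative definiteness of the \(f\)-exceptional curves.

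What your route buys:
\begin{itemize}
\item It is self-contained: the cited inverse-matrix formula is itself proved by exactly this three-term recursion.
\item It never uses \(b_{i_1}=1\), since only the equations for \(j\neq i_1\) enter.
\item Positivity becomes transparent: the \(\gamma_i\) are values of a linear function at primitive vectors strictly inside the positive quadrant.
\item It fits naturally with the Cartier-data computation already used in Lemma \ref{lma:C_intersection}.
\end{itemize}

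What the paper's route buys: it is shorter given the citation, and it produces \(C^2\) at the same time, although that value is already known from Lemma \ref{lma:C_intersection}.
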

\begin{proof}
   We can compute these numbers as follows. Let \(M_{ij}=E_i\cdot
   E_j\) be the intersection matrix of the curves \(E_i\), let
   \(\phi_j=E_j\cdot f^*C\) and let
   \(\bm{\gamma}=(\gamma_1,\ldots,\gamma_m)^T\) and
   \(\bm{\phi}=(\phi_1,\ldots,\phi_m)^T\). Intersecting both sides of
   Equation \eqref{eq:gammas} with \(E_j\), we find that
   \(\bm{\phi}=M\bm{\gamma}\). Since \(f_*E_j=0\) if \(j\neq i_1\) and
   \(f_*E_{i_1}=C\), we have
  \[\phi_j=E_j\cdot f^*C=\begin{cases}0&\text{if }j\neq i_1\\ C^2&\text{if }j=i_1\end{cases}.\] So \(\bm{\gamma}\) is
  \(C^2\) times the \(i_1\)th column of the matrix \(M^{-1}\). The
  entries of \(M^{-1}_{ij}\) are given by
  \[M^{-1}_{ij}=\begin{cases}-\beta_i\alpha_j &\text{if }i\leq
      j\\ -\beta_j\alpha_i&\text{if }i>j\end{cases}\] (see
  {\cite[Proposition 2.1]{OrevkovZaidenberg}}) where the numbers
  \(\alpha_i\) and \(\beta_i\) are defined as in Equation
  \eqref{eq:alphas_betas}. Since \(\gamma_{i_1}=1\) and
  \((\alpha_{i_1},\beta_{i_1})=(\alpha,\beta)\) this tells us
  that \(C^2=-\frac{1}{\alpha\beta}\) (which we already know
  from Lemma \ref{lma:C_intersection}).
\end{proof}

\begin{lemma}\label{lma:delta}
  Let \(g\colon Y\to Z\) be a weighted blow-up with weights
  \((\alpha,\beta)\) at a smooth point \(p\in Z\) and let \(D\subseteq
  Z\) be an irreducible ample \(\QQ\)-divisor. Let \(\til{D}=g^{-1}_*D\) and
  define \(\delta\in\QQ\) by
  \begin{equation}\label{eq:delta}
    g^*D=\til{D}+\delta C.
  \end{equation}
  Then \(\delta=\alpha\beta C\cdot \til{D}>0\).   
\end{lemma}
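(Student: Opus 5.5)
Intersect both sides of Equation \eqref{eq:delta} with \(C\) and use the push-pull formula \eqref{eq:push_pull}. Since \(C\) is contracted by \(g\), we have \(g_*C=0\), so \(C\cdot g^*D=0\). This gives
\[0 = C\cdot\til{D} + \delta C^2 .\]
By Lemma \ref{lma:C_intersection} (equivalently, the computation at the end of the proof of Lemma \ref{lma:positive_gammas}), \(C^2=-\frac{1}{\alpha\beta}\). Solving, we get \(\delta=\alpha\beta\, C\cdot\til{D}\). These intersection numbers make sense on the singular surface \(Y\) because \(\alpha\beta C\) is Cartier by Lemma \ref{lma:C_intersection}, and \(g^*D\) is a \(\QQ\)-Cartier pullback. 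Alternatively, one can work on the minimal resolution \(X\) and push forward.

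For the sign, it remains to show \(C\cdot\til{D}>0\). Since \(\til{D}\) is an irreducible curve not contained in \(C\), we have \(C\cdot\til{D}\geq 0\), with equality if and only if \(\til{D}\) is disjoint from \(C\). That in turn happens exactly when \(D\) does not pass through \(p\). So the statement as written needs \(p\in D\), which I will assume is implicit (as in all the applications). Granting this, \(\til{D}\) meets \(C\) at some point.

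The delicate point is that this intersection may occur at an orbifold point of \(Y\), where a naive local intersection count is unavailable. To handle it, pass to \(X\) via \(f\). Then
\[C\cdot \til{D} = f^*C\cdot f^{-1}_*\til{D} = \sum_i\gamma_i\,E_i\cdot f^{-1}_*\til{D}.\]
Here every \(\gamma_i\) is positive by Lemma \ref{lma:positive_gammas}, and every \(E_i\cdot f^{-1}_*\til{D}\geq 0\). At least one term is strictly positive, because \(f^{-1}_*\til{D}\) passes through some point of the exceptional chain over \(p\). Hence \(C\cdot\til{D}>0\), and therefore \(\delta>0\).

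The main obstacle is this positivity step: it requires the hypothesis \(p\in D\) together with the resolution argument to handle intersections at the singular points. The formula for \(\delta\) itself is a one-line push-pull computation.
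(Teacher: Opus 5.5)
Your proof is correct, and your derivation of \(\delta=\alpha\beta\,C\cdot\til{D}\) is exactly the paper's push-pull computation: \(C\cdot g^*D=0\) combined with \(C^2=-1/(\alpha\beta)\). The paper's proof then asserts \(\delta>0\) as though it followed from \(C^2<0\), which it does not. You correctly note that \(p\in D\) is needed (otherwise \(g^*D=\til{D}\) and \(\delta=0\)), and your positivity argument via \(f^*C=\sum_i\gamma_iE_i\) with all \(\gamma_i>0\) fills that gap; this is the same expansion the paper uses immediately afterwards to compute \(C\cdot\til{D}\).
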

\begin{proof}
  Since \(g_*C=0\), we have \(C\cdot g^*D = 0\), so
  \(0=C\cdot \til{D} + \delta C^2\). But
  \(C^2=-1/(\alpha\beta)<0\), so
  \(\delta=\alpha\beta C\cdot \til{D}>0\).
\end{proof}

\begin{definition}\label{dfn:mu}
  Let \(\mu(Z,D;g)=\min\left(\delta,\frac{D^2}{C\cdot \til{D}}\right)\).
\end{definition}

\begin{lemma}\label{lma:compute}
  The weighted Seshadri constant satisfies
  \[\mu(Z,D;g)\leq
    \varepsilon(Z,D;g)\leq \frac{D^2}{C\cdot\til{D}}.\]
  In particular, if \(\frac{D^2}{C\cdot\til{D}}\leq \delta\) then
  \(\varepsilon(Z,D;g)=\mu(Z,D;g)=\frac{D^2}{C\cdot \til{D}}\).
\end{lemma}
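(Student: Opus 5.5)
Both inequalities come from the nef characterisation of \(\varepsilon(Z,D;g)\) in Remark \ref{rmk:nef}: \(\varepsilon(Z,D;g)\) is the largest \(\varepsilon\) for which \(\Upsilon_\varepsilon=g^*D-\varepsilon C\) is nef. Throughout we use \(C^2=-1/(\alpha\beta)\) and the decomposition \(g^*D=\til{D}+\delta C\) with \(\delta=\alpha\beta\, C\cdot\til{D}\) from Lemma \ref{lma:delta}.

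For the upper bound, test nefness against the curve \(\til{D}\). By the push-pull formula \eqref{eq:push_pull}, \(g^*D\cdot\til{D}=D\cdot g_*\til{D}=D^2\), so
\[\Upsilon_\varepsilon\cdot\til{D}=D^2-\varepsilon\, C\cdot\til{D}.\]
Since \(C\cdot\til{D}=\delta/(\alpha\beta)>0\), this is nonnegative only if \(\varepsilon\leq D^2/(C\cdot\til{D})\). Hence \(\varepsilon(Z,D;g)\leq D^2/(C\cdot\til{D})\).

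For the lower bound, fix \(0\leq\varepsilon\leq\mu(Z,D;g)\) and check \(\Upsilon_\varepsilon\cdot A\geq 0\) for every irreducible curve \(A\subseteq Y\). There are three cases.
\begin{itemize}
\item If \(A=C\): \(\Upsilon_\varepsilon\cdot C=-\varepsilon C^2=\varepsilon/(\alpha\beta)\geq 0\), using \(g^*D\cdot C=0\).
\item If \(A=\til{D}\): \(\Upsilon_\varepsilon\cdot\til{D}=D^2-\varepsilon\,C\cdot\til{D}\geq 0\), because \(\varepsilon\leq D^2/(C\cdot\til{D})\).
\item Otherwise \(A\) is neither \(C\) nor \(\til{D}\), so \(A\cdot C\geq 0\) and \(A\cdot\til{D}\geq 0\) (distinct irreducible curves). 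Rewrite
\[\Upsilon_\varepsilon=\til{D}+(\delta-\varepsilon)C.\]
Since \(\varepsilon\leq\delta\), this is a nonnegative combination of \(\til{D}\) and \(C\), so \(\Upsilon_\varepsilon\cdot A\geq 0\).
\end{itemize}
Thus \(\Upsilon_\varepsilon\) is nef, and \(\varepsilon(Z,D;g)\geq\mu(Z,D;g)\).

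The "in particular" clause is then immediate: if \(D^2/(C\cdot\til{D})\leq\delta\), then \(\mu(Z,D;g)=D^2/(C\cdot\til{D})\), and the two bounds coincide.

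The main technical point is that intersection numbers are taken on the singular surface \(Y\) and \(C\) is only \(\QQ\)-Cartier. To keep everything rigorous, one can pull back to the minimal resolution \(X\), using \(f^*C=\sum\gamma_iE_i\) with \(\gamma_i>0\) (Lemma \ref{lma:positive_gammas}), and verify there that intersections of distinct irreducible curves are nonnegative. Irreducibility of \(D\) is what makes \(\til{D}\) the only other curve needing separate treatment.
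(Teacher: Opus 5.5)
Your proposal is correct and follows the paper's proof essentially line by line: you get the upper bound by testing \(\Upsilon_\varepsilon\) against \(\til{D}\), and the lower bound by the same three-case check (\(A=C\), \(A=\til{D}\), and otherwise via \(\Upsilon_\varepsilon=\til{D}+(\delta-\varepsilon)C\)). Your extra remarks on justifying \(g^*D\cdot\til{D}=D^2\) by push-pull and on working on the minimal resolution are harmless additions that the paper leaves implicit.
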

\begin{proof}
  As noted in Remark \ref{rmk:nef}, the weighted Seshadri constant is
  the maximal \(\varepsilon>0\) for which the divisor
  \(\Upsilon_\varepsilon=g^*D-\varepsilon C\) is nef, that is,
  \(\Upsilon_\varepsilon\) intersects all curves non-negatively. We have
  \[\Upsilon_\varepsilon\cdot \til{D} = (g^*D-\varepsilon C)\cdot
  \til{D} = D^2-\varepsilon C\cdot\til{D},\] so \(\varepsilon\leq
  D^2/C\cdot \til{D}\). This proves the upper bound.

  To prove the lower bound we need to show that if
  \(\varepsilon\leq \min\left(\delta,\frac{D^2}{C\cdot \til{D}}\right)\)
  then \(\Upsilon_\varepsilon\) is nef. Let \(A\subseteq Y\) be an
  irreducible curve; we must show that \(\Upsilon_\varepsilon\cdot A\geq
  0\).
  \begin{itemize}
  \item If \(A=C\) then \(\Upsilon_\varepsilon\cdot A =
    (g^*D-\varepsilon C)\cdot C = \varepsilon/(\alpha\beta)>0\), since
    \(g^*D\cdot C=D\cdot g_*C=0\) and \(C^2=-1/(\alpha\beta)\).
  \item If \(A=\til{D}\) then \(\Upsilon_\varepsilon\cdot A =
    (g^*D-\varepsilon C)\cdot \til{D} = D^2-\varepsilon C\cdot
    \til{D}\), which is non-negative since \(\varepsilon\leq
    \frac{D^2}{C\cdot\til{D}}\).
  \item If \(A\neq C,\til{D}\) then \(A\cdot \til{D}\geq 0\) and
    \(A\cdot C\geq 0\). Computing, we get \[\Upsilon_\varepsilon\cdot
    A=A\cdot \til{D}+(\delta-\varepsilon)A\cdot C.\] Since
    \(\varepsilon\leq \delta\), all terms on the right-hand side are
    non-negative, so this implies \(\Upsilon_\varepsilon\cdot A\geq 0\).
  \end{itemize}
  This shows that \(\Upsilon_\varepsilon\) is nef.
\end{proof}

\begin{remark}\label{rmk:ineff}
  Note that \(\Upsilon_\varepsilon\) is {\em effective} (i.e. a
  non-negative linear combination of curves) if and only if
  \(\varepsilon\leq \delta\). If \(\delta<\frac{D^2}{C\cdot\til{D}}\)
  then \(\mu(Z,D;g)=\delta\) gives us a lower bound on the weighted
  Seshadri constant, but \(\varepsilon(Z,D;g)\) could in principle be
  larger. This will usually mean that when we produce ellipsoids in
  the regime \(\delta<\frac{D^2}{C\cdot\til{D}}\), they
  will not be optimal (they could possibly be made larger). In this
  case, we say we are in the {\em ineffective regime}.
\end{remark}
\begin{remark}\label{rmk:obstr}
  If \(\frac{D^2}{C\cdot \til{D}}\leq \delta\) then Lemma
  \ref{lma:compute} computes the weighted Seshadri constant: as
  \(\varepsilon\) approaches its maximal value, the ``symplectic
  area'' \(\Upsilon_\varepsilon\cdot\til{D}\) goes to zero. Indeed, as
  we will see, this is precisely the symplectic area of \(\til{D}\)
  for a symplectic form associated to the projective embedding coming
  from the linear system of a large multiple of
  \(\Upsilon_\varepsilon\). Now pass to the minimal resolution \(f\colon
  X\to Y\) and take the proper transform
  \(\dbtilde{D}=f^{-1}_*\til{D}\). Suppose that (the reduced curve
  underlying\footnote{We will usually take \(D\) to be a rational
  multiple of a reduced curve, so this just means ``drop the
  coefficient''.}) \(\dbtilde{D}\) has non-zero Gromov--Witten
  invariant (for example, it could be an embedded rational curve of
  square \(-1\)). Since such a curve needs to have positive symplectic
  area, it will provide an obstruction to taking \(\varepsilon\) any
  larger. For this reason, if \(\frac{D^2}{C\cdot\til{D}}\leq
  \delta\), we say that we are in the {\em potentially obstructive
    regime}, and if moreover \(\dbtilde{D}\) has nonzero
  Gromov--Witten invariant, we say that we are in the {\em obstructive
    regime}.
\end{remark}

Evidently, the number \(C\cdot \til{D}\) is the most important thing
to calculate in order to apply this lemma. This can be achieved using
the following lemma.

\begin{lemma}
  Suppose that \(X\stackrel{f}{\to} Y\stackrel{g}{\to} Z\) is the
  minimal resolution \(f\) of a weighted blow-up \(g\) at a smooth
  point \(p\in Z\) with weights \((\alpha,\beta)\) and \(D\subseteq Z\)
  is an irreducible ample \(\QQ\)-divisor. Let
  \(\gamma_1,\ldots,\gamma_m\) be the numbers defined by Equation
  \eqref{eq:gammas_formula}. Then, writing \(\dbtilde{D}\coloneqq
  f^{-1}_*\til{D}\):
  \begin{equation}\label{eq:C_dot_D}
    C\cdot \til{D} = E_{i_1}\cdot \dbtilde{D} +
    \frac{1}{\beta}\sum_{i<i_1}\beta_i E_i\cdot \dbtilde{D} +
    \frac{1}{\alpha}\sum_{i>i_1}\alpha_i E_i\cdot \dbtilde{D}.
  \end{equation}
\end{lemma}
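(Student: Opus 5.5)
The plan is to compute \(C\cdot\til{D}\) by pulling back along the minimal resolution \(f\colon X\to Y\) and applying the push-pull formula \eqref{eq:push_pull}. Since \(f\) is birational and \(\til{D}\) is not contained in the exceptional locus of \(f\), we have \(f_*\dbtilde{D}=\til{D}\). Push-pull with \(A=\dbtilde{D}\) and \(B=C\) then gives
\[C\cdot\til{D}=f_*\dbtilde{D}\cdot C=\dbtilde{D}\cdot f^*C.\]
Here we interpret the degree of the zero-cycle \(f_*(\dbtilde{D}\cdot f^*C)\) as the intersection number, using that pushforward preserves degrees of zero-cycles. Note that this intersection number on the singular surface \(Y\) is the Mumford-type \(\QQ\)-valued intersection, as in {\cite[1.1.C]{Lazarsfeld}}. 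That definition is set up exactly so that it is computed by pulling back to a resolution, so the equality above is essentially the definition.

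Next I substitute the expansion \eqref{eq:gammas} from Lemma \ref{lma:positive_gammas}, \(f^*C=\sum_{i=1}^m\gamma_iE_i\). By linearity this gives \(C\cdot\til{D}=\sum_i\gamma_i\,E_i\cdot\dbtilde{D}\). Inserting the explicit values \eqref{eq:gammas_formula} of the \(\gamma_i\), namely \(\gamma_{i_1}=1\), \(\gamma_i=\beta_i/\beta_{i_1}\) for \(i<i_1\) and \(\gamma_i=\alpha_i/\alpha_{i_1}\) for \(i>i_1\), and using \((\alpha_{i_1},\beta_{i_1})=(\alpha,\beta)\), yields exactly \eqref{eq:C_dot_D}.

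There is no real obstacle here. The only point needing care is the justification that \(C\cdot\til{D}=f^*C\cdot f^{-1}_*\til{D}\) when \(C\) is merely \(\QQ\)-Cartier. One way to handle this is to multiply by \(\alpha\beta\), so that \(\alpha\beta C\) is Cartier by Lemma \ref{lma:C_intersection}, apply push-pull to the Cartier divisor, and then divide back out. Alternatively, one can simply invoke the standard fact that Mumford's intersection pairing on a normal surface is characterised by this pullback property.
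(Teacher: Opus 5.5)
Your proof is correct and matches the paper's argument: push-pull gives \(C\cdot\til{D}=f^*C\cdot f^{-1}_*\til{D}\), then you expand \(f^*C=\sum_i\gamma_iE_i\) and substitute \eqref{eq:gammas_formula} using \((\alpha_{i_1},\beta_{i_1})=(\alpha,\beta)\). Your extra step of passing to the Cartier divisor \(\alpha\beta C\) to justify push-pull is a sensible addition that the paper leaves implicit.
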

\begin{proof}
  Because \(C\cdot \til{D} = f^*C\cdot
  f^{-1}_*\til{D}\) and \(f^*C=\sum_{i=1}^m\gamma_iE_i\), we get
  \[C\cdot \til{D}=\sum_{i=1}^m\gamma_i E_i\cdot
  \dbtilde{D}.\] Now substituting in the values of \(\gamma_i\) from
  Equation \eqref{eq:gammas_formula} gives the result (recall that
  \(f^{-1}_*C=E_{i_1}\), \(\alpha=\alpha_{i_1}\) and
  \(\beta=\beta_{i_1}\)).
\end{proof}

\section{Examples}
\label{sct:examples}
\subsection{Nodal cubic}
\label{sct:examples_nodal}
Let \(A\) be the nodal cubic curve in the plane defined by the
equation \(\{xyz=x^3+y^3\}\subseteq\CP^2\). We will make a
weighted blow-up with weights \((\alpha,\beta)\) with slope
\(\beta/\alpha\) in the interval \((6,8]\) which is adapted to
\(A\) in the sense of Definition \ref{dfn:general_adapted}. This
will give us large values of the weighted Seshadri constant
\(\varepsilon(\CP^2,\frac{1}{3}A;g)\) which will in turn enable
us to embed large ellipsoids whose slopes lie within this
interval. More precisely, we will prove the following:

\begin{proposition}\label{prp:first_step}
  Equip \(\CP^2\) with the Fubini--Study form which gives a line area
  \(1\).
  \begin{itemize}
  \item[(a)] For all \(s\in \left(\frac{7+\sqrt{45}}{2},7\right]\) and for all
    \(\sigma_1<\frac{3s}{1+s}\) and \(\sigma_2 < \frac{3}{1+s}\),
    the ellipsoid \(\Ell(\sigma_1,\sigma_2)\) embeds symplectically in
    \(\CP^2\).
  \item[(b)] For all \(s\in \left[7,\frac{64}{9}\right]\) and for all
    \(\sigma_1<\frac{8s}{3}\) and \(\sigma_2<\frac{8}{3}\), the ellipsoid
    \(\Ell(\sigma_1,\sigma_2)\) embeds symplectically in \(\CP^2\).
  \end{itemize}
\end{proposition}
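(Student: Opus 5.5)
The plan is to apply Theorem~\ref{thm:ellipsoids_galore} with $Z=\CP^2$ and $D=\frac13 A$, where $A$ is the nodal cubic. $D$ is then a line class, so $D^2=1$. The weighted Seshadri constant will be computed via Lemma~\ref{lma:compute} and formula~\eqref{eq:C_dot_D}. I first treat rational slopes $s=\beta/\alpha$ with $\alpha,\beta$ coprime. For irrational $s$, or slopes at the ends of the intervals, I approximate by rational slopes and use the inclusions $\Ell(\sigma_1,\sigma_2)\subseteq\Ell(\sigma_1',\sigma_2')$ for $\sigma_j\le\sigma_j'$; this is allowed because all the inequalities are strict. The Theorem gives $\Ell(\varepsilon/\alpha,\varepsilon/\beta)$ for $\zeta$ dual to $\pi D$. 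At the end I rescale to the normalisation of the statement; I will fix the constant by comparing with the volume.

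For (a), take the Farey process of weight $(\alpha,\beta)$ guided by one branch of $A$. By Example~\ref{exm:guided_wb}(2), $\dbtilde{A}$ meets only $E_1$ and $E_m$, each once transversely. In the fan, $\bm{u}_0=(1,0)$ and $\bm{u}_1$ satisfy $\bm u_0\wedge\bm u_1=1$, so $\beta_1=1$; symmetrically $\alpha_m=1$. Formula~\eqref{eq:C_dot_D} then gives
\[C\cdot\til A=\frac1\beta+\frac1\alpha, \qquad\text{so}\qquad C\cdot\til D=\frac{\alpha+\beta}{3\alpha\beta}.\]
By Lemma~\ref{lma:delta}, $\delta=\alpha\beta\, C\cdot\til D=\frac{\alpha+\beta}{3}$, while $D^2/C\cdot\til D=\frac{3\alpha\beta}{\alpha+\beta}$. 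The potentially obstructive condition $D^2/C\cdot\til D\le\delta$ reads $9\alpha\beta\le(\alpha+\beta)^2$, that is $s^2-7s+1\ge0$, which holds exactly for $s\ge\frac{7+\sqrt{45}}2$. Hence Lemma~\ref{lma:compute} gives $\varepsilon(\CP^2,D;g)=\frac{3\alpha\beta}{\alpha+\beta}$. Theorem~\ref{thm:ellipsoids_galore} then produces $\Ell\bigl(\frac{3s}{1+s},\frac{3}{1+s}\bigr)$ (up to shrinking) after dividing by $\alpha$ and $\beta$. The restriction $s\le7$ is not needed for this lower bound; it only ensures adaptedness (Example~\ref{exm:nodal_cubic}), hence optimality, and so it is not part of the proof.

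For (b), a different Farey process is needed. Follow the guiding branch for the first six rightward steps, reaching the node labelled $(1,7)$, and then continue the process towards $(\alpha,\beta)$ with $s\ge7$. This places the ray $(1,7)$ in the chain, at some position $k$, on the side of $\bm a$ with $\beta_k/\beta$ as its $\gamma$-coefficient. All later free points are chosen generally, off $\dbtilde A$. Then $\dbtilde A$ meets $E_1$ (the other branch) and $E_k$ transversely once each. This gives
\[C\cdot\til A=\frac{1+7}{\beta}=\frac8\beta, \qquad C\cdot\til D=\frac8{3\beta},\]
so $D^2/C\cdot\til D=\frac{3\beta}8$ and $\delta=\frac{8\alpha}3$. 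The inequality $\frac{3\beta}8\le\frac{8\alpha}{3}$ is exactly $s\le\frac{64}9$. So on $[7,\frac{64}9]$ we are in the potentially obstructive regime, and Lemma~\ref{lma:compute} gives $\varepsilon=\frac{3\beta}8$. Then $\Ell(\varepsilon/\alpha,\varepsilon/\beta)$ has the two axes in ratio $s$, matching the shape claimed in (b) after the same normalisation as in (a).

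I expect the main obstacle to be justifying these intersection patterns. In (a) one must check that the second branch of the node meets the chain only in $E_1$, and that the guiding branch stays transverse to the last curve. In (b) one must check that after leaving the branch at $(1,7)$, the curve $\dbtilde A$ meets $E_k$ once transversely and meets no other $E_i$. I would verify both by tracking the multiplicities of the branch through the infinitely near points. The branch is smooth and has contact order $7$ with the toric-like flag along the first free steps, so I would write it locally as $y=\xi(x)$ using Remark~\ref{rmk:quasimonomial}. I would also double-check the normalisation constant against the volume bound, since Remark~\ref{rmk:ample} normalises lines to area $\pi$.
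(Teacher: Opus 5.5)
Your proposal is correct and is essentially the paper's proof. It uses the same divisor $D=\frac13 A$, the Farey process guided by one branch in (a) with $\dbtilde{A}$ meeting only $E_1$ and $E_m$, and in (b) the process that leaves the branch at $(1,7)$ so that $\dbtilde{A}$ meets only $E_1$ and the $(1,7)$-curve. It then applies Lemma~\ref{lma:compute} and Theorem~\ref{thm:ellipsoids_galore} in the same way. Your observation that $s\le 7$ in (a) matters only for adaptedness, not for the lower bound, is also right.

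The one thing to correct is your deferral to ``normalisation'' in (b), where no rescaling is needed or possible. Under the conventions that make (a) come out exactly, $\varepsilon=3\beta/8$ gives $\sigma_1<\varepsilon/\alpha=3s/8$ and $\sigma_2<\varepsilon/\beta=3/8$. So the bounds $8s/3$ and $8/3$ printed in the statement (and in the paper's proof) are a misprint for $3s/8$ and $3/8$; as written they would give a packing ratio of $64s/9>1$.
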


\begin{remark}
  These are the optimal ellipsoid embeddings for the first exceptional
  step in the McDuff-Schlenk staircase \cite{McDuffSchlenk}: one
  cannot embed larger ellipsoids with these slopes. Note that
  \(\frac{7+\sqrt{45}}{2}\approx 6.85410\) is the fourth power of the
  Golden Ratio.
\end{remark}

\begin{proof}
  (a) The nodal cubic \(A\) has two irreducible branches at
  \(p=[0:0:1]\). If \(s=\beta/\alpha\) lies in the interval
  \((6,7]\) then the Farey process guided by one of these
  branches (see Definition \ref{dfn:guided}) yields a weighted
  blow-up \(g\colon Y\to\CP^2\) adapted to \(A\). Let
  \(f\colon X\to Y\) be the minimal resolution. Note that
  \(\dbtilde{A}=f^{-1}_*g^{-1}_*A\) intersects the left-most and
  right-most curves \(E_1\) and \(E_m\) in the chain each once
  transversely:
  \[\dbtilde{A}\cdot E_1 = \dbtilde{A}\cdot E_m=1,\qquad
  \dbtilde{A}\cdot E_j=0\text{ if }j\neq 1,m.\] Let \(D=\frac{1}{3}A\)
  so that \(D^2=1\). Let \(\til{D}=f_*^{-1}D\). We can compute
  \(\mu(Z,D;g)\) using Equation \eqref{eq:C_dot_D}:
  \begin{gather*}
    C\cdot \til{D} =
    \frac{1}{3}\left(\frac{1}{\beta}+\frac{1}{\alpha}\right) =
    \frac{\alpha+\beta}{3\alpha\beta},\\
    \mu(\CP^2,D;g) = \min\left(\alpha\beta C\cdot \til{D},
      \frac{D^2}{C\cdot \til{D}}\right) =
    \min\left(\frac{\alpha+\beta}{3},
      \frac{3\alpha\beta}{\alpha+\beta}\right)
  \end{gather*}
  The two quantities inside the \(\min\) are equal if and only if
  \[(\alpha+\beta)^2=9\alpha\beta,\]
  that is, setting \(s=\beta/\alpha\), if and only if
  \(s^2-7s+1=0\). This equation has roots
  \(s_\pm=\frac{7\pm\sqrt{45}}{2}\). If the slope \(s=\beta/\alpha\)
  is bigger than \(s_+\) then we have
  \(\frac{3\alpha\beta}{\alpha+\beta}<\frac{\alpha+\beta}{3}\), so, by
  Lemma \ref{lma:compute},
  \[\varepsilon(\CP^2,D;g) = \mu(\CP^2,D;g) =
    \frac{3\alpha\beta}{\alpha+\beta}=\frac{3s\alpha}{1+s}.\]
  Theorem \ref{thm:ellipsoids_galore} gives us ellipsoid
  embeddings \(\Ell(\sigma_1,\sigma_2)\to \CP^2\) for any
  \(\sigma_1<\frac{3s}{1+s}\) and \(\sigma_2 <\frac{3}{1+s}\),
  as required.

  (b) If \(7<\beta/\alpha\leq 8\) then we perform the Farey
  process guided by a branch of \(A\) until we get to the node
  \((\alpha,\beta)=(1,7)\). Then we continue by choosing
  \(q_7\in F_7\) to be any point {\em except} \(F_7\cap R\), and
  blowing up \(q_7\). This ensures that we stay adapted to
  \(A\), and, provided we don't go beyond \(\beta/\alpha=8\),
  there are no further arbitrary choices to be made. We continue
  until we reach \(X\) and again we take
  \(D=\frac{1}{3}A\). Since the curve \(E_{i_1}=F_m\) will now
  be to the right of \(E_1,\ldots,E_7\), and since the proper
  transform of the nodal cubic intersects only \(E_1\) and
  \(E_7\) (each once transversely), we have
  \[C\cdot \til{D} = \frac{1}{3}\left(\frac{1}{\beta} +
      \frac{7}{\beta}\right)=\frac{8}{3\beta}\] since
  \(\beta_1=1\) and \(\beta_7=7\). We get
  \[\mu(\CP^2,D;g) = \min\left(\frac{8\alpha}{3},
      \frac{3\beta}{8}\right).\] If \(s=\beta/\alpha\leq 64/9\)
  then \(\frac{3\beta}{8}\leq \frac{8\alpha}{3}\), so, by Lemma
  \ref{lma:compute},
  \(\varepsilon(\CP^2,D;g)=\mu(\CP^2,D;g)=3\beta/8\) and Theorem
  \ref{thm:ellipsoids_galore} gives an ellipsoid embedding
  \(\Ell(\sigma_1,\sigma_2)\to \CP^2\) for any
  \(\sigma_1 < \frac{8s}{3}\) and \(\sigma_2 < \frac{8}{3}\).
\end{proof}

\begin{remark}
  The reason the packing ratios are optimal in both these cases is
  that we are in the obstructive regime (see Remark
  \ref{rmk:obstr}). In other words, the curve \(\dbtilde{D}\) is an
  embedded rational curve of square \(-1\) in the minimal resolution,
  and so has non-zero Gromov--Witten invariant, and as we approach the
  weighted Seshadri constant its area goes to zero, so it provides an
  obstruction to making the ellipsoid bigger. If we lie in the
  interval \((6,8]\) but outside either of the subintervals listed in
    Proposition \ref{prp:first_step} then we are in the ineffective
    regime. Whilst we are still able to construct ellipsoids from
    this, they usually fail to be optimal: there is no obstructive
    curve whose area goes to zero.
\end{remark}

\subsection{Steps beyond the Fibonacci staircase}
\label{sct:examples_beyond}
The nodal cubic example enabled us to construct optimal embeddings for
all ellipsoids in the range of slopes between
\(\frac{7+\sqrt{45}}{2}\) and \(\frac{64}{9}\). In fact, we can also
construct optimal ellipsoids for all of the piecewise linear steps in
the McDuff--Schlenk staircase beyond the Fibonacci range (we will
refer to these as the {\em exceptional steps}). We will work out in
detail the most complicated step, which has its midpoint at
\(7\frac{2}{15}\), and give enough information for the reader to be
able to reconstruct the remaining steps following the same reasoning.

\begin{proposition}
  Equip \(\CP^2\) with the Fubini--Study form which gives a line area
  \(1\).
  \begin{itemize}
  \item[(a)] For all
    \(s\in
    \left(\frac{435+32\sqrt{179}}{121},\frac{107}{15}\right]\)
    and for all \(\sigma_1<\frac{4096s}{7+11s}\) and
    \(\sigma_2 < \frac{4096}{7+11s}\), the ellipsoid
    \(\Ell(\sigma_1,\sigma_2)\) embeds symplectically in
    \(\CP^2\).
  \item[(b)] For all
    \(s\in \left[\frac{107}{15},
      \frac{1201-64\sqrt{177}}{49}\right]\) and for all
    \(\sigma_1<\frac{4096s}{121+7s}\) and
    \(\sigma_2<\frac{4096}{121+7s}\), the ellipsoid
    \(\Ell(\sigma_1,\sigma_2)\) embeds symplectically in \(\CP^2\).
  \end{itemize}
\end{proposition}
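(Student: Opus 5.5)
The plan is to copy the proof of Proposition \ref{prp:first_step}, replacing the nodal cubic by a suitable rational plane curve \(A\) of higher degree \(d\). For \(A\) I would take the ``supraminimal curve'' of Dumnicki--Harbourne--K\"{u}ronya--Ro\'{e}--Szemberg \cite{DumnickiEtAl} responsible for the step of \(\hat{\mu}\) centred at \(107/15\). The relevant properties of \(A\) are the following:
\begin{itemize}
\item it has a unibranch (or suitably multibranch) singular point \(p\);
\item its proper transform under the Farey process of slope \(107/15\), guided by a branch of \(A\) at \(p\) (Definition \ref{dfn:guided}), is an embedded rational curve with \(\dbtilde{A}^2=-1\).
\end{itemize}
In the language of Definition \ref{dfn:general_adapted}, the second property says that this process is adapted to \(A\). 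I would set \(D=\frac{1}{d}A\), so that \(D^2=1\). The first concrete task is to fix \(A\), for instance by a parametrisation or by its sequence of multiplicities at infinitely near points. From this one reads off the integers \(E_i\cdot\dbtilde{A}\) on the chain \(E_1,\ldots,E_m\) of the minimal resolution, together with the vectors \((\alpha_i,\beta_i)\) of Equation \eqref{eq:alphas_betas}.

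For part (a), take weights with \(\beta/\alpha=s\) slightly below \(107/15\), lying on the Farey branch guided by \(A\). Equation \eqref{eq:C_dot_D} should then give
\[C\cdot\til{D}=\frac{1}{d}\left(\frac{B}{\beta}+\frac{A'}{\alpha}\right),\qquad B=\sum_{i<i_1}\beta_iE_i\cdot\dbtilde{A},\quad A'=\sum_{i>i_1}\alpha_iE_i\cdot\dbtilde{A}.\]
Here \(B\) and \(A'\) are fixed integers, independent of \(s\) inside the step, because the curves meeting \(\dbtilde{A}\) stay on fixed sides of \(E_{i_1}\); compare the nodal cubic, where both sums equal \(1\). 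After taking into account the line-area-\(1\) normalisation used in the statement, I expect \((B,A')\) to be proportional to \((7,11)\). Lemma \ref{lma:delta} gives \(\delta=\alpha\beta\,C\cdot\til{D}\), and Definition \ref{dfn:mu} gives \(\mu=\min(\delta,1/C\cdot\til{D})\). The two quantities agree exactly when \((7\alpha+11\beta)^2\) equals a constant times \(\alpha\beta\). This is a quadratic in \(s\), whose relevant root should be \(\frac{435+32\sqrt{179}}{121}\). Above that root we are in the potentially obstructive regime, so Lemma \ref{lma:compute} gives \(\varepsilon(\CP^2,D;g)=1/(C\cdot\til{D})\). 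Theorem \ref{thm:ellipsoids_galore} then gives the ellipsoids \(\Ell(\varepsilon/\alpha,\varepsilon/\beta)\), and these are the stated bounds.

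For part (b), I would mimic part (b) of Proposition \ref{prp:first_step}. Run the guided Farey process up to the node \((15,107)\), then blow up a general free point on the last curve instead of the point on \(\dbtilde{A}\), and continue upward in slope. This keeps the process adapted to \(A\). It also moves \(E_{i_1}\) to the other side of every curve that \(\dbtilde{A}\) meets, so that \(C\cdot\til{D}\) becomes a single term \(\frac{1}{d}\cdot\frac{\text{const}}{\beta}\) or the analogous expression in \(\alpha\); this is what should produce the coefficients \((121,7)\). The same comparison of \(\delta\) with \(1/C\cdot\til{D}\) should then yield the upper endpoint \(\frac{1201-64\sqrt{177}}{49}\), and Theorem \ref{thm:ellipsoids_galore} finishes the argument.

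The main obstacle is the first step: exhibiting \(A\) and verifying that its proper transform meets the chain in exactly the prescribed way, with \(\dbtilde{A}\) embedded and of square \(-1\). I would first try to take the explicit curve from \cite{DumnickiEtAl}, or else construct it as a Cremona transform of a line or conic, and check the intersection data numerically using the genus formula and the expansion \(h^*A=\dbtilde{A}+\sum m_iE_i\). Once the intersection data are in hand, the rest is bookkeeping with continued fractions.
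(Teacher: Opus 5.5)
Part (a) of your proposal is essentially the paper's argument once the curve is pinned down. Part (b), however, rests on a mechanism that is not available here, and it predicts the wrong shape for \(C\cdot\til{D}\).

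\textbf{Part (a).} The paper uses the degree-\(64\) rational curve of \cite[Remark 5.8]{DumnickiEtAl}, obtained from a cubic by two Orevkov twists. Its proper transform meets the chain only in \(F_7,F_{16},F_{14}\), which carry the labels \((1,7),(15,107),(7,50)\). The intersection numbers are \(\dbtilde{A}\cdot F_7=2\) and \(\dbtilde{A}\cdot F_{16}=\dbtilde{A}\cdot F_{14}=1\). So your constants are \((B,A')=(2\beta_7,\alpha_{16}+\alpha_{14})=(14,22)\), and your argument goes through.

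\textbf{Part (b): the free point does not exist.} The free-point trick of Proposition \ref{prp:first_step}(b) works there because at the node \((1,7)\) the newest curve \(F_7\) is the end of the chain. It has only one satellite point, so the step to \((1,8)\) is a free blow-up, which can be placed off \(\dbtilde{A}\). At \((15,107)\) the newest curve \(F_{16}\) sits between \(F_{15}\) and \(F_{14}\) and has two satellite points. Blowing up a non-satellite point of \(F_{16}\) would attach the new curve to the middle of the chain. The exceptional configuration would then be branched, and contracting it would not give a weighted blow-up.

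\textbf{What actually happens in (b).} Slopes in \((107/15,50/7)\) are reached by blowing up \(F_{16}\cap F_{14}\), and then further satellite points adjacent to \(F_{14}\). Since \(\dbtilde{A}\) meets \(F_{16}\) and \(F_{14}\) away from the satellite points, adaptedness is automatic and no choice is made. In particular, \(E_{i_1}\) does not move past every curve that \(\dbtilde{A}\) meets. It lands between \(F_{16}\) and \(F_{14}\): \(F_7\) and \(F_{16}\) lie on its left and \(F_{14}\) on its right. Equation \eqref{eq:C_dot_D} therefore gives a two-term expression,
\[C\cdot\til{D}=\frac{1}{64}\left(\frac{2\beta_7+\beta_{16}}{\beta}+\frac{\alpha_{14}}{\alpha}\right)=\frac{121\alpha+7\beta}{64\alpha\beta},\]
with \(121=2\cdot 7+107\) and \(7=\alpha_{14}\).

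\textbf{Why a single term cannot work.} Suppose \(C\cdot\til{D}=k/(d\beta)\). Then \(\delta=k\alpha/d\) and \(1/(C\cdot\til{D})=d\beta/k\) cross at the rational slope \(k^2/d^2\), as in steps 1, 5, 6, 8 and 9 of Table \ref{tab:step_data}. The endpoint \(\frac{1201-64\sqrt{177}}{49}\) is irrational, so the \(\alpha\)-term \(7/\alpha\) is essential.

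\textbf{A check on the curve you still need.} The intersection data above force the multiplicities of \(A\):
\begin{itemize}
\item \(24\) at the centres of \(F_1,\ldots,F_7\);
\item \(3\) at the centres of \(F_8,\ldots,F_{14}\);
\item \(1\) at the centres of \(F_{15}\) and \(F_{16}\).
\end{itemize}
These give genus \(\binom{63}{2}-7\binom{24}{2}-7\binom{3}{2}=0\) and \(\dbtilde{A}^2=4096-4097=-1\), as required.

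\textbf{The bounds you will actually get.} Carried through with \(d=64\), the argument yields:
\begin{itemize}
\item in (a), \(\sigma_1<\frac{32s}{7+11s}\) and \(\sigma_2<\frac{32}{7+11s}\);
\item in (b), \(\sigma_1<\frac{64s}{121+7s}\) and \(\sigma_2<\frac{64}{121+7s}\).
\end{itemize}
This is exactly what the paper's proof derives. The numerators \(4096\) in the displayed statement are misprints, since they would violate the volume constraint. So you should not expect to recover ``the stated bounds'' literally.
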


\begin{proof}
  There is a rational curve \(A\subseteq\CP^2\) of degree \(64\)
  with a singularity at \(p_0=[0:0:1]\) adapted to a suitably
  general Farey process with \((\alpha,\beta)=(15,107)\) so that
  the proper transform of \(A\) is the embedded \(-1\)-curve
  indicated by a bold wiggly line in Figure \ref{fig:curves}
  (Step 3). This curve can be constructed by applying two
  birational transformations (Orevkov twists) to a suitable
  cubic curve, see {\cite[Remark 5.8]{DumnickiEtAl}}. The path
  in the Stern--Brocot tree is: travel right until you reach
  \((1,8)\), then left until you reach \((8,57)\), then right
  one more step to \((15,107)\) (see Figure
  \ref{fig:farey_stairs}). If the desired slope \(\beta/\alpha\)
  lies in the interval \((7,107/15)\) (which contains the
  interval from (a)) then it involves only blow-ups between
  \(F_7\) and \(F_{16}\); if the slope lies in the interval
  \((107/15,50/7)\) (which includes the interval from (b)) then
  it involves only blow-ups between \(F_{16}\) and \(F_{14}\).
  In either case, the proper transform of \(A\) will still
  satisfy
  \[\dbtilde{A}\cdot F_7=2,\qquad \dbtilde{A}\cdot F_{16}=1,\qquad
    \dbtilde{A}\cdot F_{14}=1.\] Let \(D=\frac{1}{64}A\), let
  \(g\colon Y\to Z\) be the weighted blow-up (obtained by
  contracting all the curves \(F_i\) for \(i<m\)) and let
  \(\til{D}=g^{-1}_*D\).

  (a) Since the final blow-up curve \(F_m\) lies between \(F_7\) and
  \(F_{16}\), Equation \eqref{eq:C_dot_D} yields:
  \[C\cdot \til{D}=\frac{1}{64}\left(\frac{2\beta_7}{\beta} +
  \frac{\alpha_{16}+\alpha_{14}}{\alpha}\right)\]
  where \(\beta_7=7\), \(\alpha_{16}=15\) and \(\alpha_{14}=7\), so
  \[C\cdot \til{D} = \frac{14\alpha+22\beta}{64\alpha\beta}.\]
  This
  gives
  \[\mu(\CP^2,D;g_{(\alpha,\beta)})=\min\left(\alpha\beta C\cdot
      \til{D},\frac{D^2}{C\cdot \til{D}}\right) =
    \alpha\min\left(\frac{14+22s}{64},\frac{64s}{14+22s}\right)\]
  where \(s=\beta/\alpha\). The first term in the minimum is
  smaller when \((14+22s)^2<4096s\), that is if \(s_-<s<s_+\)
  where \(s_\pm=\frac{435\pm 32\sqrt{179}}{121}\),
  \(s_-=0.05677\cdots\), \(s_+ = 7.13331\cdots\). The interval
  we are considering is \((7,107/15)\) and
  \(107/15=7.13333\cdots\). Therefore, if \(s\in (s_+,107/15)\)
  we are in the potentially obstructive regime and, by Lemma
  \ref{lma:compute}, the weighted Seshadri constant is
  \(\varepsilon(\CP^2,D;g) = \mu(\CP^2,D;g) =
  \frac{64\beta}{14+22s}\). This gives an ellipsoid embedding
  \(\Ell(\sigma_1,\sigma_2)\to\CP^2\) whenever \(\sigma_1<\frac{64s}{14+22s}\) and
  \(\sigma_2<\frac{64}{14+22s}\), as required.

  (b) Since the final blow-up curve \(F_m\) lies between \(F_{16}\)
  and \(F_{14}\), Equation \eqref{eq:C_dot_D} yields:
  \[C\cdot \til{D} =
    \frac{1}{64}\left(\frac{2\beta_7+\beta_{16}}{\beta} +
      \frac{\alpha_{14}}{\alpha}\right)\] where \(\beta_7=7\),
  \(\beta_{16}=107\) and \(\alpha_{14}=7\), so
  \[C\cdot\til{D} = \frac{121\alpha+7\beta}{64\alpha\beta}.\]
  This gives
  \[\mu(\CP^2,D;g) = \min\left(\alpha\beta
      C\cdot \til{D},\frac{D^2}{C\cdot \til{D}}\right) =
    \alpha\min\left(\frac{121+7s}{64},\frac{64s}{121+7s}\right)\]
  where \(s=\beta/\alpha\). The first term in the minimum is
  smaller when when \((121+7s)^2<4096s\), that is if
  \(s_-<s<s_+\) where
  \(s_{\pm}=\frac{1201\pm 64\sqrt{177}}{49}\),
  \(s_-=7.13337\cdots\), \(s_+=48.88703\cdots\). The interval we
  are considering is between \(107/15\approx 7.13333\cdots\) and
  \(50/7\approx 7.14285\cdots\). Therefore, if
  \(s\in (107/15,s_-)\) we are in the potentially obstructive
  regime and, by Lemma \ref{lma:compute}, the weighted Seshadri
  constant is
  \(\varepsilon(\CP^2,D;g) = \mu(\CP^2,D;g) =
  \frac{64\beta}{121+7s}\). This gives an ellipsoid embedding
  \(\Ell(\sigma_1,\sigma_2)\to \CP^2\) whenever
  \(\sigma_1<\frac{64s}{121+7s}\) and \(\sigma_2<\frac{64}{121+7s}\),
  as required.
\end{proof}

\begin{remark}
  In conclusion, for slopes between
  \(\frac{435+32\sqrt{179}}{121}\) and
  \(\frac{1201-64\sqrt{177}}{49}\), for the given divisor \(D\),
  we are in the potentially obstructive regime\footnote{In fact,
    we are in the obstructive regime, since \(\dbtilde{D}\) is
    an embedded rational curve of square \(-1\).} and we obtain
  embeddings of ellipsoids realising the optimal packing ratios
  \(\frac{4096}{(14+22s)^2}\) (below \(s=107/15)\) and
  \(\frac{1201-64\sqrt{177}}{49}\) (above \(s=107/15\)). For
  slopes just outside this interval, we are in the ineffective
  regime and obtain suboptimal ellipsoids. See Figure
  \ref{fig:third_step_packing_ratios}. In fact, just outside
  this interval of slopes, one can find full ellipsoid
  embeddings.
\end{remark}

\begin{figure}[htb]
  \centering
  \begin{tikzpicture}
    \draw[<->] (-0.5,0) -- (10.5,0);
    \draw (1,0.1) -- (1,-0.1);
    \draw (3,0.1) -- (3,-0.1);
    \draw (7,0.1) -- (7,-0.1);
    \node at (1,0) [below] {\(\frac{435+32\sqrt{179}}{121}\)};
    \node at (1,-0.7) [below] {\(\approx\)};
    \node at (1,-1.1) [below] {\(7.13331\)};
    \node at (3,-0.1) [below] {\(107/15\)};
    \node at (3,-0.7) [below] {\(\approx\)};
    \node at (3,-1.1) [below] {\(7.13333\)};
    \node at (7,0) [below] {\(\frac{1201-64\sqrt{177}}{49}\)};
    \node at (7,-0.7) [below] {\(\approx\)};
    \node at (7,-1.1) [below] {\(7.13337\)};
    \node at (11,0) {to \(\frac{50}{7}\)};
    \node at (-1,0) {to \(7\)};
    \node at (-2.5,1) {packing ratios:};
    \node at (0,1) {\(\frac{(14+22s)^2}{4096s}\)};
    \node at (2,1) {\(\frac{1024s}{(7+11s)^2}\)};
    \node at (5,1) {\(\frac{4096s}{(121+7s)^2}\)};
    \node at (8.5,1) {\(\frac{(121+7s)^2}{4096s}\)};
    \node at (-2.5,-1) {slopes:};
    \draw [decorate,decoration={brace,amplitude=5pt,mirror,raise=2ex}] (3,0) -- (1,0);
    \draw [decorate,decoration={brace,amplitude=5pt,mirror,raise=2ex}] (7,0) -- (3,0);   
    \begin{scope}
      \clip(-1, 0) rectangle (1, 1);
      \draw [decorate,decoration={brace,amplitude=5pt,mirror,raise=2ex}] (1,0) -- (-9,0);
    \end{scope}
    \begin{scope}
      \clip(7, 0) rectangle (11, 1);
      \draw [decorate,decoration={brace,amplitude=5pt,mirror,raise=2ex}] (20,0) -- (7,0);
    \end{scope}
      \draw [decorate,decoration={brace,amplitude=5pt,mirror,raise=0ex}] (7,2) -- (1,2) node [midway,above] {optimal};
      \draw[dotted] (1,2) -- (1,0);
      \draw[dotted] (7,2) -- (7,0);
  \end{tikzpicture}
  \caption{The packing ratios for the third exceptional step in the
    McDuff--Schlenk staircase.}\label{fig:third_step_packing_ratios}
\end{figure}

\begin{remark}
  A similar analysis can be used with all the other curves in
  Figure \ref{fig:curves} to find the remaining exceptional
  steps of the McDuff--Schlenk staircase. As the slope
  increases, the centre of the Farey process moves rightward in
  the chain and this changes the contributions of the
  corresponding intersections \(\dbtilde{D}\cdot E_i\) to the
  intersection number \(C\cdot \til{D}\) when we pass through
  one of the finitely many slopes corresponding to vertices
  \((\alpha,\beta)\) of the Farey tree for curves \(E_i\) which
  intersect \(\dbtilde{D}\). We summarise the information about
  the staircase in Table \ref{tab:step_data}: each step starts
  at a particular slope \(s=\beta/\alpha\) and ends at another
  slope and yields ellipsoids \(\Ell(\sigma_1,\sigma_2)\) whenever
  \(\sigma_1<\frac{ds}{k+\ell s}\) and \(\sigma_2<\frac{d}{k+\ell s}\) where
  \(k\) and \(\ell\) are defined by
  \[C\cdot \til{D} = \frac{k}{\beta}+\frac{\ell}{\alpha}.\] The
  values of \(k\) and \(\ell\) change partway through the step
  (the ``break point''). In all cases, the packing ratio is
  \(\frac{ds}{(k+\ell s)^2}\).
\end{remark}

\begin{table}[htb]
  \centering
  \def\arraystretch{1.5}
  \begin{tabular}{c||c||c||c|c||c||c|c||c}
    Step & \(d\) &  Start  & \(k\) & \(\ell\) & Break 
    & \(k\) & \(\ell\) & End \\
    \hline\hline
    \(1\) & \(3\) & \(\frac{7+3\sqrt{5}}{2}\) &  \(1\) & \(1\) & \(7\) &
    \(8\) & \(0\) & \(\frac{8^2}{3^2}\)\\
    \(2\) & \(48\) & \(\frac{1033+48\sqrt{457}}{289}\) & \(7\) & \(17\)
    & \(-\frac{57}{8}\) & \(131\) & \(1\) & \(1031-48\sqrt{455}\)\\
    \(3\) & \(64\) & \(\frac{435+32\sqrt{179}}{121}\) &  \(14\) & \(22\) &\(\frac{107}{15}\)
    & \(121\) & \(7\) & \(\frac{1201-64\sqrt{177}}{49}\)\\
    \(4\) & \(24\) & \(\frac{29+6\sqrt{22}}{8}\) & \(7\) & \(8\) &
    \(\frac{50}{7}\) & \(57\) & \(1\) & \(231-24\sqrt{87}\)\\
    \(5\) & \(40\) & \(\frac{618+40\sqrt{218}}{169}\) & \(14\) & \(13\) &
    \(\frac{93}{13}\) & \(107\) & \(0\) & \(\frac{107^2}{40^2}\)\\
    \(6\) & \(16\) & \(\frac{93+16\sqrt{29}}{25}\) & \(7\) & \(5\) &
    \(\frac{36}{5}\) & \(43\) & \(0\) & \(\frac{43^2}{16^2}\)\\
    \(7\) & \(35\) & \(\frac{35^2}{13^2}\) & \(0\) & \(13\) & \(\frac{29}{4}\) & \(87\) &
    \(1\) & \(\frac{1051-35\sqrt{877}}{2}\)\\
    \(8\) & \(8\) & \(\frac{9+4\sqrt{2}}{2}\) & \(7\) & \(2\) &
    \(\frac{15}{2}\) & \(22\) & \(0\) & \(\frac{22^2}{8^2}\)\\
    \(9\) & \(6\) & \(\frac{8+3\sqrt{7}}{2}\) & \(1\) & \(2\) &
    \(8\) & \(17\) & \(0\) & \(\frac{17^2}{6^2}\)
  \end{tabular}
  \caption{The data for the nine exceptional steps in the
    McDuff--Schlenk staircase. Start, Break and End refer to the
    slopes where the piecewise linear steps start, break and
    end. \(d\) refers to the degree of the rational curve \(A\)
    needed for our construction of the ellipsoid. \(k\) and
    \(\ell\) refer to the formula for
    \(C\cdot \til{D}=\frac{k}{\beta}+\frac{\ell}{\alpha}\): they
    change in value on either side of the breakpoint of the
    step.}\label{tab:step_data}
\end{table}

\begin{figure}[htb]
  \centering
  \begin{tikzpicture}
    \node at (-1,0) {Step 1};
    \foreach \x in {0,1,2} {
      \tikzmath{
        integer \y, \z;
        \y = 2*\x+1;
        \z = 2*\x+2;
      }
      \draw ({2*\x},0) -- ({2*\x+1.2},0.5) node [pos=0.4,above] {\(F_{\y}\)} node [pos=0.6,below] {\(-2\)};
      \draw ({2*\x+1},0.5) -- ({2*\x+2.2},0) node [pos=0.6,above] {\(F_{\z}\)} node [pos=0.4,below] {\(-2\)};
    };
    \foreach \x in {0,1,2,3,4,5,6} {
      \tikzmath{
        integer \y, \z;
        \y = \x+1;
      }
      \node at ({\x+0.6},-0.5) {\(\frac{\y}{1}\)};
    }
    \draw (6,0) -- (7.2,0.5) node [pos=0.4,above] {\(F_7\)} node [pos=0.6,below] {\(-1\)};
    \draw[very thick] (0.2,0) -- (0.2,0.5) to[out=90,in=180] (3.5,1.2) to[out=0,in=90] (7,0.5) -- (7,0);
    \begin{scope}[shift={(-3,-2.5)}]
      \node at (2,0) {Step 2};
      \draw (3,0) -- (4.2,0.5) node [pos=0.4,above] {\(F_1\)} node [pos=0.6,below] {\(-2\)};
      \draw (5,0.5) -- (6.2,0) node [pos=0.6,above] {\(F_6\)} node [pos=0.4,below] {\(-2\)};;
      \node at (4.6,0.25) {\(\cdots\)};
      \draw (6,0) -- (7.2,0.5) node [pos=0.4,above] {\(F_7\)} node [pos=0.6,below] {\(-9\)};
      \draw (7,0.5) -- (9.2,0) node [pos=0.6,above] {\(F_{15}\)} node [pos=0.6,below] {\(-1\)};
      \foreach \x in {4,5,6} {
        \tikzmath{
          integer \y, \z;
          \y = 22-2*\x;
          \z = 22-2*\x-1;
        }
        \draw ({2*\x+1},0) -- ({2*\x+2.2},0.5) node [pos=0.4,above] {\(F_{\y}\)} node [pos=0.6,below] {\(-2\)};
        \draw ({2*\x+2},0.5) -- ({2*\x+3.2},0) node [pos=0.6,above] {\(F_{\z}\)} node [pos=0.4,below] {\(-2\)};
      };
      \node at (3.6,-0.5) {\(\frac{1}{1}\)};
      \node at (5.6,-0.5) {\(\frac{6}{1}\)};
      \node at (6.6,-0.5) {\(\frac{7}{1}\)};
      \node at (7.6,-0.5) {\(\frac{57}{8}\)};
      \node at (9.6,-0.5) {\(\frac{50}{7}\)};
      \node at (10.6,-0.5) {\(\frac{43}{6}\)};
      \node at (11.6,-0.5) {\(\frac{36}{5}\)};
      \node at (12.6,-0.5) {\(\frac{29}{4}\)};
      \node at (13.6,-0.5) {\(\frac{22}{3}\)};
      \node at (14.6,-0.5) {\(\frac{15}{2}\)};
      \node at (15.6,-0.5) {\(\frac{8}{1}\)};
      \draw (15,0) -- (16.2,0.5) node [pos=0.4,above] {\(F_8\)} node [pos=0.6,below] {\(-2\)};    
      \draw[very thick] (7.2,0) to[out=135,in=180] (7,1.2) to[out=0,in=180] (8.4,-0.5) to[out=0,in=200] (9,1.2) to[out=20,in=180] (12.5,1.2) to[out=0,in=90] (16.1,0);
    \end{scope}
    \begin{scope}[shift={(-3,-5)}]
      \node at (3.6,-0.9) {\(\frac{1}{1}\)};
      \node at (5.6,-0.9) {\(\frac{6}{1}\)};
      \node at (6.6,-0.9) {\(\frac{7}{1}\)};
      \node at (7.6,-0.9) {\(\frac{57}{8}\)};
      \node at (9.6,-0.9) {\(\frac{50}{7}\)};
      \node at (10.6,-0.9) {\(\frac{43}{6}\)};
      \node at (11.6,-0.9) {\(\frac{36}{5}\)};
      \node at (12.6,-0.9) {\(\frac{29}{4}\)};
      \node at (13.6,-0.9) {\(\frac{22}{3}\)};
      \node at (14.6,-0.9) {\(\frac{15}{2}\)};
      \node at (15.6,-0.9) {\(\frac{8}{1}\)};
      \node at (8.6,-0.9) {\(\frac{107}{15}\)};
      \node at (2,0) {Step 3};
      \draw (3,0) -- (4.2,0.5) node [pos=0.4,above] {\(F_1\)} node [pos=0.6,below] {\(-2\)};
      \draw (5,0.5) -- (6.2,0) node [pos=0.6,above] {\(F_6\)} node [pos=0.4,below] {\(-2\)};;
      \node at (4.6,0.25) {\(\cdots\)};
      \draw (6,0) -- (7.2,0.5) node [pos=0.4,above] {\(F_7\)} node [pos=0.6,below] {\(-9\)};
      \draw (7,0.5) -- (8.2,0) node [pos=0.6,above] {\(F_{15}\)} node [pos=0.4,below] {\(-2\)};
      \draw (7.8,0.1) -- (9.4,0.1) node [pos=0.55,above] {\(F_{16}\)} node [pos=0.55,below] {\(-1\)};
      \foreach \x in {4,5,6} {
        \tikzmath{
          integer \y, \z;
          \y = 22-2*\x-2;
          \z = 22-2*\x-1;
        }
        \draw ({2*\x+3},0) -- ({2*\x+4.2},0.5) node [pos=0.4,above] {\(F_{\y}\)} node [pos=0.6,below] {\(-2\)};
        \draw ({2*\x+2},0.5) -- ({2*\x+3.2},0) node [pos=0.6,above] {\(F_{\z}\)} node [pos=0.4,below] {\(-2\)};
      };
      \draw (9,0) -- (10.2,0.5) node [pos=0.4,above] {\(F_{14}\)} node [pos=0.6,below] {\(-3\)};    
      \draw[very thick] (6,0.5) to[out=-45,in=-135] (7,-0.4) to[out=45,in=-45] (6.7,0.7) to[out=135,in=180] (8,1.2) to[out=0,in=90] (8.3,0) to[out=-90,in=180] (8.5,-0.5) to[out=0,in=180] (10,-0.5) to[out=0,in=-45] (9.9,0.4) to[out=135,in=180] (11,1);
    \end{scope}
    \begin{scope}[shift={(-3,-7.7)}]
      \node at (3.6,-0.5) {\(\frac{1}{1}\)};
      \node at (5.6,-0.5) {\(\frac{6}{1}\)};
      \node at (6.6,-0.5) {\(\frac{7}{1}\)};
      \node at (7.6,-0.5) {\(\frac{50}{7}\)};
      \node at (8.6,-0.5) {\(\frac{43}{6}\)};
      \node at (9.6,-0.5) {\(\frac{36}{5}\)};
      \node at (10.6,-0.5) {\(\frac{29}{4}\)};
      \node at (11.6,-0.5) {\(\frac{22}{3}\)};
      \node at (12.6,-0.5) {\(\frac{15}{2}\)};
      \node at (13.6,-0.5) {\(\frac{8}{1}\)};
      \node at (2,0) {Step 4};
      \draw (3,0) -- (4.2,0.5) node [pos=0.4,above] {\(F_1\)} node [pos=0.6,below] {\(-2\)};
      \draw (5,0.5) -- (6.2,0) node [pos=0.6,above] {\(F_6\)} node [pos=0.4,below] {\(-2\)};;
      \node at (4.6,0.25) {\(\cdots\)};
      \draw (6,0) -- (7.2,0.5) node [pos=0.4,above] {\(F_7\)} node [pos=0.6,below] {\(-8\)};
      \foreach \x in {3,4,5} {
        \tikzmath{
          integer \y, \z;
          \y = 22-2*\x-2;
          \z = 22-2*\x-3;
        }
        \draw ({2*\x+1},0.5) -- ({2*\x+2.2},0) node [pos=0.6,above] {\(F_{\y}\)} node [pos=0.4,below] {\(-2\)};
        \draw ({2*\x+2},0) -- ({2*\x+3.2},0.5) node [pos=0.4,above] {\(F_{\z}\)} node [pos=0.6,below] {\(-2\)};
      };
      \draw (13,0.5) -- (14.2,0) node [pos=0.6,above] {\(F_{8}\)} node [pos=0.4,below] {\(-2\)};    
      \draw[very thick] (6.8,0.7) to[out=-45,in=180] (7.1,0.1) to[out=0,in=-135] (7.4,0.7) to[out=45,in=180] (11.5,1.2) to[out=0,in=180] (13,1.2) to[out=0,in=45] (14,0);
    \end{scope}
    \begin{scope}[shift={(-3,-10)}]
      \node at (3.6,-0.9) {\(\frac{1}{1}\)};
      \node at (5.6,-0.9) {\(\frac{6}{1}\)};
      \node at (6.6,-0.9) {\(\frac{7}{1}\)};
      \node at (7.6,-0.9) {\(\frac{50}{7}\)};
      \node at (9.6,-0.9) {\(\frac{43}{6}\)};
      \node at (10.6,-0.9) {\(\frac{36}{5}\)};
      \node at (11.6,-0.9) {\(\frac{29}{4}\)};
      \node at (12.6,-0.9) {\(\frac{22}{3}\)};
      \node at (13.6,-0.9) {\(\frac{15}{2}\)};
      \node at (14.6,-0.9) {\(\frac{8}{1}\)};
      \node at (8.6,-0.9) {\(\frac{93}{13}\)};
      \node at (2,0) {Step 5};
      \draw (3,0) -- (4.2,0.5) node [pos=0.4,above] {\(F_1\)} node [pos=0.6,below] {\(-2\)};
      \draw (5,0.5) -- (6.2,0) node [pos=0.6,above] {\(F_6\)} node [pos=0.4,below] {\(-2\)};;
      \node at (4.6,0.25) {\(\cdots\)};
      \draw (6,0) -- (7.2,0.5) node [pos=0.4,above] {\(F_7\)} node [pos=0.6,below] {\(-8\)};
      \foreach \x in {4,5} {
        \tikzmath{
          integer \y, \z;
          \y = 22-2*\x-4;
          \z = 22-2*\x-3;
        }
        \draw ({2*\x+4},0.5) -- ({2*\x+5.2},0) node [pos=0.6,above] {\(F_{\y}\)} node [pos=0.4,below] {\(-2\)};
        \draw ({2*\x+3},0) -- ({2*\x+4.2},0.5) node [pos=0.4,above] {\(F_{\z}\)} node [pos=0.6,below] {\(-2\)};
      };
      \draw (10,0.5) -- (11.2,0) node [pos=0.6,above] {\(F_{12}\)} node [pos=0.4,below] {\(-2\)};
      \draw (9,0) -- (10.2,0.5) node [pos=0.4,above] {\(F_{13}\)} node [pos=0.6,below] {\(-3\)};
      \draw (7,0.5) -- (8.2,0) node [pos=0.6,above] {\(F_{14}\)} node [pos=0.4,below] {\(-2\)};
      \draw (7.8,0.1) -- (9.4,0.1) node [pos=0.55,above] {\(F_{15}\)} node [pos=0.55,below] {\(-1\)};
      \draw[very thick] (6,0.5) to[out=-45,in=-135] (7,-0.4) to[out=45,in=-45] (6.7,0.7) to[out=135,in=180] (8,1.2) to[out=0,in=90] (8.3,0) to[out=-90,in=180] (8.5,-0.5) to[out=0,in=180] (10,-0.5);
    \end{scope}
    \begin{scope}[shift={(-3,-12.6)}]
      \node at (3.6,-0.9) {\(\frac{1}{1}\)};
      \node at (5.6,-0.9) {\(\frac{6}{1}\)};
      \node at (6.6,-0.9) {\(\frac{7}{1}\)};
      \node at (7.6,-0.9) {\(\frac{36}{5}\)};
      \node at (8.6,-0.9) {\(\frac{29}{4}\)};
      \node at (9.6,-0.9) {\(\frac{22}{3}\)};
      \node at (10.6,-0.9) {\(\frac{15}{2}\)};
      \node at (11.6,-0.9) {\(\frac{8}{1}\)};
      \node at (2,0) {Step 6};
      \draw (3,0) -- (4.2,0.5) node [pos=0.4,above] {\(F_1\)} node [pos=0.6,below] {\(-2\)};
      \draw (5,0.5) -- (6.2,0) node [pos=0.6,above] {\(F_6\)} node [pos=0.4,below] {\(-2\)};;
      \node at (4.6,0.25) {\(\cdots\)};
      \draw (6,0) -- (7.2,0.5) node [pos=0.4,above] {\(F_7\)} node [pos=0.6,below] {\(-6\)};
      \foreach \x in {3,4} {
        \tikzmath{
          integer \y, \z;
          \y = 22-2*\x-6;
          \z = 22-2*\x-5;
        }
        \draw ({2*\x+3},0.5) -- ({2*\x+4.2},0) node [pos=0.6,above] {\(F_{\y}\)} node [pos=0.4,below] {\(-2\)};
        \draw ({2*\x+2},0) -- ({2*\x+3.2},0.5) node [pos=0.4,above] {\(F_{\z}\)} node [pos=0.6,below] {\(-2\)};
      };
      \draw (7,0.5) -- (8.2,0) node [pos=0.6,above] {\(F_{12}\)} node [pos=0.4,below] {\(-1\)};
      \draw[very thick] (7,-0.4) to[out=45,in=-45] (6.7,0.7) to[out=135,in=180] (7.8,1.2) to[out=0,in=45] (7.8,-0.2) to[out=-130,in=180] (8.5,-0.5) to[out=0,in=180] (10,-0.5);
    \end{scope}
    \begin{scope}[shift={(-3,-15.2)}]
      \node at (3.6,-0.7) {\(\frac{1}{1}\)};
      \node at (5.6,-0.7) {\(\frac{6}{1}\)};
      \node at (6.6,-0.7) {\(\frac{7}{1}\)};
      \node at (9.3,-0.7) {\(\frac{29}{4}\)};
      \node at (10.6,-0.7) {\(\frac{22}{3}\)};
      \node at (11.6,-0.7) {\(\frac{15}{2}\)};
      \node at (12.6,-0.7) {\(\frac{8}{1}\)};
      \node at (2,0) {Step 7};
      \draw (3,0) -- (4.2,0.5) node [pos=0.4,above] {\(F_1\)} node [pos=0.6,below] {\(-2\)};
      \draw (5,0.5) -- (6.2,0) node [pos=0.6,above] {\(F_6\)} node [pos=0.4,below] {\(-2\)};;
      \node at (4.6,0.25) {\(\cdots\)};
      \draw (6,0) -- (7.2,0.5) node [pos=0.4,above] {\(F_7\)} node [pos=0.6,below] {\(-5\)};
      \draw (7,0.5) -- (10.2,0) node [pos=0.7,above] {\(F_{11}\)} node [pos=0.7,below] {\(-1\)};
      \draw (10,0) -- (11.2,0.5) node [pos=0.4,above] {\(F_{10}\)} node [pos=0.6,below] {\(-2\)};
      \draw (11,0.5) -- (12.2,0) node [pos=0.6,above] {\(F_9\)} node [pos=0.4,below] {\(-2\)};
      \draw (12,0) -- (13.2,0.5) node [pos=0.4,above] {\(F_8\)} node [pos=0.6,below] {\(-2\)};
      \draw[very thick] (7.1,-0.2) to[out=45,in=135] (7.8,0.8) to[out=-45,in=90] (7.5,-0.1) to[out=-90,in=-90] (8.3,0.5) to[out=90,in=180] (10,1.2) to[out=0,in=90] (12.9,0.5) to[out=-90,in=-45] (13,0.2);
    \end{scope}
    \begin{scope}[shift={(-3,-17.5)}]
      \node at (3.6,-0.9) {\(\frac{1}{1}\)};
      \node at (5.6,-0.9) {\(\frac{6}{1}\)};
      \node at (6.6,-0.9) {\(\frac{7}{1}\)};
      \node at (7.6,-0.9) {\(\frac{15}{2}\)};
      \node at (8.6,-0.9) {\(\frac{8}{1}\)};
      \node at (2,0) {Step 8};
      \draw (3,0) -- (4.2,0.5) node [pos=0.4,above] {\(F_1\)} node [pos=0.6,below] {\(-2\)};
      \draw (5,0.5) -- (6.2,0) node [pos=0.6,above] {\(F_6\)} node [pos=0.4,below] {\(-2\)};;
      \node at (4.6,0.25) {\(\cdots\)};
      \draw (6,0) -- (7.2,0.5) node [pos=0.4,above] {\(F_7\)} node [pos=0.6,below] {\(-3\)};
      \draw (7,0.5) -- (8.2,0) node [pos=0.6,above] {\(F_9\)} node [pos=0.4,below] {\(-1\)};
      \draw (8,0) -- (9.2,0.5) node [pos=0.4,above] {\(F_8\)} node [pos=0.6,below] {\(-2\)};
      \draw[very thick] (7,-0.4) to[out=45,in=-45] (6.7,0.7) to[out=135,in=180] (7.8,1.2) to[out=0,in=45] (7.8,-0.2) to[out=-130,in=180] (8.5,-0.5) to[out=0,in=180] (10,-0.5);
    \end{scope}
    \begin{scope}[shift={(-3,-20)}]
      \node at (3.6,-0.7) {\(\frac{1}{1}\)};
      \node at (5.6,-0.7) {\(\frac{6}{1}\)};
      \node at (6.6,-0.7) {\(\frac{7}{1}\)};
      \node at (8.6,-0.7) {\(\frac{8}{1}\)};
      \node at (2,0) {Step 9};
      \draw (3,0) -- (4.2,0.5) node [pos=0.4,above] {\(F_1\)} node [pos=0.6,below] {\(-2\)};
      \draw (5,0.5) -- (6.2,0) node [pos=0.6,above] {\(F_6\)} node [pos=0.4,below] {\(-2\)};;
      \node at (4.6,0.25) {\(\cdots\)};
      \draw (6,0) -- (7.2,0.5) node [pos=0.4,above] {\(F_7\)} node [pos=0.6,below] {\(-2\)};
      \draw (7,0.5) -- (9.2,0) node [pos=0.6,above] {\(F_8\)} node [pos=0.4,below] {\(-1\)};
      \draw[very thick] (3.3,-0.2) to[out=135,in=180] (3.3,1.2) to[out=0,in=180] (7.8,1.2) to[out=0,in=45] (7.2,-0.2) to[out=-130,in=180] (7.5,-0.5) to[out=0,in=-90] (9,0.5);
    \end{scope}
  \end{tikzpicture}
  \caption{The curves required for constructing optimal
    ellipsoids in the McDuff--Schlenk staircase beyond the
    Fibonacci stairs. The fractions indicate the Farey labels
    \(\frac{\beta_i}{\alpha_i}\) defined in Equation
    \eqref{eq:alphas_betas}.}\label{fig:curves}
\end{figure}

\begin{figure}[htb]
  \centering
  \begin{tikzpicture}
    \node (0) at (-2,-0.2) {};
    \node (a) at (0,0) {\((1,7)\)};
    \node (b) at (10,1) {\((1,8)\)};
    \node (c) at (9,2) {\((2,15)\)};
    \node (d) at (8,3) {\((3,22)\)};
    \node (e) at (7,4) {\((4,29)\)};
    \node (f) at (6,5) {\((5,36)\)};
    \node (g) at (5,6) {\((6,43)\)};
    \node (h) at (3,7) {\((7,50)\)};
    \node (i) at (4,8) {\((13,93)\)};
    \node (j) at (1,8) {\((8,57)\)};
    \node (k) at (2,9) {\((15,107)\)};
    \draw[dashed,->] (0) -- (a);
    \draw[->] (a) -- (b);
    \draw[->] (b) -- (c);
    \draw[->] (c) -- (d);
    \draw[->] (d) -- (e);
    \draw[->] (e) -- (f);
    \draw[->] (f) -- (g);
    \draw[->] (g) -- (h);
    \draw[->] (h) -- (j);
    \draw[->] (h) -- (i);
    \draw[->] (j) -- (k);
    \draw (-0.5,-1) -- (10.5,-1);
    \draw[dotted] (a) -- (0,-1);
    \draw[dotted] (b) -- (10,-1);
    \draw[dotted] (c) -- (5,-1);
    \draw[dotted] (e) -- (2.5,-1);
    \draw[dotted] (f) -- (2,-1);
    \draw[dotted] (h) -- (10/7,-1);
    \draw[dotted] (i) -- (20/13,-1);
    \draw[dotted] (j) -- (10/8,-1);
    \draw[dotted] (k) -- (20/15,-1);
    \node at (0,-1) [below] {\(7\)};
    \node at (10,-1) [below] {\(8\)};
    \node at (5,-1) [below] {\(7\tfrac{1}{2}\)};
    \node at (2.5,-1) [below] {\(7\tfrac{1}{4}\)};
    \node at (2,-1) [below] {\(7\tfrac{1}{5}\)};
    \node at (1.3,-1) [below] {\(\cdots\)};
  \end{tikzpicture}
  \caption{The part of the Farey tree relevant for the Farey
    processes involved in finding the midpoints of the
    exceptional steps of the McDuff--Schlenk staircase. Below we
    see a number line: each pair \((\alpha,\beta)\) involved in
    a step is connected to the corresponding rational number
    \(\beta/\alpha\).}\label{fig:farey_stairs}
\end{figure}

\clearpage

\subsection{Unicuspidal curves}
\label{sct:examples_unicuspidal}
McDuff and Siegel \cite{McDuffSiegelInfiniteStairs} use {\em
  unicuspidal} and {\em sesquicuspidal} curves to produce ellipsoids
for the Fibonacci portion of the McDuff--Schlenk staircase. Recall
that a {\em \((p,q)\)-cusp} is a singularity locally analytically
modelled on the curve germ \(\{(x,y)\in\CC^2\,:\,y^p=x^q\}\), where
\(\gcd(p,q)=1\); a {\em \((p,q)\)-unicuspidal curve} is a curve with a
unique singular point which is a \((p,q)\)-cusp, and a {\em
  \((p,q)\)-sesquicuspidal curve} is allowed to have some additional
nodal singularities. Theorem A(a) of
{\cite{McDuffSiegelInfiniteStairs}} explains how to use such a curve
to produce symplectic ellipsoids. We illustrate how our methods work
in this context by proving the following result:

\begin{proposition}\label{prp:unicuspidal}
  Let \(A\subseteq\CP^2\) be an irreducible curve of degree \(d\)
  with a \((p,q)\)-cusp (and possibly other singular
  points). Then:
  \begin{itemize}
  \item for any slope \(s\) in the interval
    \(d^2/q^2 < s < p/q\) there is a symplectic embedding of
    \(\Ell\left(\frac{d}{q},\frac{d}{qs}\right)\) into \(\CP^2\),
  \item and for any slope \(s\) in the interval
    \(p/q\leq s<p^2/d^2\) there is a symplectic embedding of
    \(\Ell\left(\frac{p}{d},\frac{p}{ds}\right)\) into
    \(\CP^2\).
  \end{itemize}
  In particular, we get full fillings for slopes
  \(s=\frac{d^2}{q^2}\) and \(s=\frac{p^2}{d^2}\).
\end{proposition}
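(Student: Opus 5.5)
The plan is to apply Theorem \ref{thm:ellipsoids_galore} and Lemma \ref{lma:compute} with \(D=\frac{1}{d}A\), so that \(D^2=1\), and a \emph{toric} weighted blow-up centred at the cusp. Choose local analytic coordinates \(\iota\colon\mZ(R)\to\CP^2\) centred at the cusp in which the germ of \(A\) is \(\{x^p=y^q\}\). Let \(g\colon Y\to\CP^2\) be the weighted blow-up which is toric in these coordinates, with weight \(\alpha\) on \(x\) and \(\beta\) on \(y\); here \((\alpha,\beta)\) is primitive and \(s=\beta/\alpha\). This is a weighted blow-up in the sense of Definition \ref{dfn:weighted_blowup}. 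I first handle rational \(s\), and then treat irrational slopes by approximation, since the embedding statements are monotone in the ellipsoid.

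The key computation is \(C\cdot\til{D}\) and \(\delta\), which I would obtain directly from the toric local model rather than from Equation \eqref{eq:C_dot_D}. The exceptional divisor \(\mC\) is the toric divisor of the ray \((\alpha,\beta)\), so the monomial \(x^iy^j\) pulls back to vanish to order \(\alpha i+\beta j\) along \(\mC\). Hence \(g^*A=\til{A}+w\,C\), where
\[w=\min(\alpha p,\beta q)\]
is the weighted order of \(x^p-y^q\). Indeed, for a generic analytic germ with this Newton polygon the minimum is attained, and the proper transform meets \(C\) away from the orbifold points unless the two terms tie. The other singular points of \(A\) are irrelevant because they lie away from the centre. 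Lemma \ref{lma:delta} then gives \(C\cdot\til{A}=w/(\alpha\beta)\), which is consistent with the ordinary blow-up case \(\alpha=\beta=1\). For \(D\) this yields
\[\delta=\frac{w}{d},\qquad \frac{D^2}{C\cdot\til{D}}=\frac{d\alpha\beta}{w}.\]

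The argument now splits into two cases.
\begin{itemize}
\item[(i)] If \(s<p/q\), then \(w=\beta q\), so \(\delta=\beta q/d\) and \(D^2/(C\cdot\til{D})=d\alpha/q\). The condition \(d\alpha/q\leq\beta q/d\) is exactly \(s\geq d^2/q^2\). In that range we are in the potentially obstructive regime, and Lemma \ref{lma:compute} gives \(\varepsilon(\CP^2,D;g)=d\alpha/q\). Theorem \ref{thm:ellipsoids_galore} then produces \(\Ell(\varepsilon/\alpha,\varepsilon/\beta)\) for \(\varepsilon\) arbitrarily close to this value, that is, ellipsoids arbitrarily close to \(\Ell(d/q,d/(qs))\).
\item[(ii)] If \(s\geq p/q\), then \(w=\alpha p\), so \(\delta=\alpha p/d\) and \(D^2/(C\cdot\til{D})=d\beta/p\). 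For \(s<p^2/d^2\) we have \(\delta<d\beta/p\), which is the ineffective regime. Here I only use the lower bound \(\varepsilon\geq\mu=\delta=\alpha p/d\) from Lemma \ref{lma:compute}, and this gives ellipsoids arbitrarily close to \(\Ell(p/d,p/(ds))\).
\end{itemize}
At the endpoints \(s=d^2/q^2\) and \(s=p^2/d^2\), the volume \(\frac{1}{2}\sigma_1\sigma_2\) equals \(\frac{1}{2}\), which is the volume of \(\CP^2\) for line area \(1\), so these give full fillings.

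The step I expect to be the main obstacle is passing from ``every \(\varepsilon<\varepsilon(\CP^2,D;g)\)'' to the ellipsoid of exactly the stated size. I would try to handle this with the standard fact that, for ellipsoids in four-manifolds, embedding all smaller scalings implies embedding the interior; this follows from McDuff's reduction to ball packings, and the corresponding existence problem is closed. Failing that, I would interpret the statement for open ellipsoids. A secondary point to check is the claim about the cusp coordinates: I need the analytic normal form \(x^p=y^q\), which holds by the definition of a \((p,q)\)-cusp. I also need the intersection computation to be genuinely local; this should follow from the push-pull formula \eqref{eq:push_pull} applied in the chart \(\mY(R)\), since \(C\) lies entirely inside that chart.
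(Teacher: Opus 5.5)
\textbf{The gap is in your case~(ii), where the key inequality is reversed.}

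Since $\delta=\alpha p/d$,
\[\frac{d\beta}{p}\leq\frac{\alpha p}{d}\iff s\leq\frac{p^2}{d^2}.\]
So for $p/q\leq s<p^2/d^2$ it is $\delta$ that is the \emph{larger} quantity. You are in the potentially obstructive regime, not the ineffective one, and $\mu(\CP^2,D;g)=d\beta/p$, not $\delta$.

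Moreover, the upper bound in Lemma~\ref{lma:compute} gives
\[\varepsilon(\CP^2,D;g)\leq D^2/(C\cdot\til{D})=d\beta/p<\alpha p/d.\]
So the bound $\varepsilon\geq\alpha p/d$ that you use is false, and Theorem~\ref{thm:ellipsoids_galore} cannot be applied with $\varepsilon$ near $\alpha p/d$.

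No repair is possible for that target. In the normalisation where your case~(i) gives a full filling at $s=d^2/q^2$, the packing ratio of $\Ell(\sigma_1,\sigma_2)$ is $\sigma_1\sigma_2$. But $\frac{p}{d}\cdot\frac{p}{ds}=\frac{p^2}{d^2s}>1$ on all of $[p/q,p^2/d^2)$, so the target ellipsoid has more volume than $\CP^2$.

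\textbf{What the argument actually gives.} The paper's proof finds $\varepsilon(\CP^2,D;g)=d\beta/p$ in this range, and so does your computation once the inequality is corrected. This yields $\Ell(\varepsilon/\alpha,\varepsilon/\beta)$ for $\varepsilon<d\beta/p$, i.e.\ ellipsoids approaching $\Ell\left(\frac{ds}{p},\frac{d}{p}\right)$, with packing ratio $d^2s/p^2\to 1$ as $s\to p^2/d^2$.

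The second bullet of the statement is a misprint for this. So is the packing ratio $p^2/(d^2s)$ quoted at the end of the paper's proof. The two ellipsoids coincide only at $s=p^2/d^2$, which is why your full-filling conclusion there survives. A volume check would have caught the misprint; you should have run one rather than bending the regime analysis to fit the printed conclusion.

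\textbf{The rest of your argument is sound, and in one respect cleaner than the paper's.} You compute $\delta$ as the monomial valuation $\min(\alpha p,\beta q)$ of $x^p-y^q$, then get $C\cdot\til{D}$ from Lemma~\ref{lma:delta}. This replaces the paper's use of Equation~\eqref{eq:C_dot_D} and the position of $\dbtilde{A}$ in the resolution chain. It treats all $(\alpha,\beta)$ uniformly and gives the same values $q/(d\alpha)$ and $p/(d\beta)$, so your case~(i) agrees with the paper.

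Two small points, neither load-bearing:
\begin{itemize}
\item Your aside about where $\til{A}$ meets $C$ is backwards. When $\alpha p\neq\beta q$, $\til{A}$ passes through a torus-fixed point of $C$; only in the tie case does it meet the open orbit.
\item No genericity is needed, since a monomial valuation never has cancellation.
\end{itemize}
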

\begin{proof}
  The \((p,q)\)-cusp admits a toric weighted blow-up with weights
  \((q,p)\) so that the proper transform of the cusp is a curve
  intersecting the final blow-up curve \(F_m\) once
  transversely. Since \(A\) has a singular point modelled locally
  analytically on this cusp, we can perform the corresponding weighted
  blow-up on \(\CP^2\) (note that this need no longer be toric since
  the local analytic chart around the cusp will not be toric).

  As usual, we will take \(D=A/d\). Given a slope
  \(s=\beta/\alpha\), we can perform a Farey process with
  weights \((\alpha,\beta)\) which follows the Farey process
  guided by the cusp as far as it can in the Farey tree. If
  \(s<p/q\) (respectively \(s\geq p/q\)) then \(\dbtilde{A}\)
  will intersect the resolution chain \(E_1,\ldots,E_m\)
  somewhere to the right (respectively left) of \(E_{i_1}\) so
  we will have \(C\cdot\til{D}=\frac{q}{d\alpha}\) (respectively
  \(C\cdot\til{D}=\frac{p}{d\beta}\)). This gives
  \[\mu(\CP^2,D;g)=\min\left(\alpha\beta C\cdot\til{D},\frac{1}{C\cdot
    \til{D}}\right)=\begin{cases}\min\left(\frac{\beta
    q}{d},\frac{d\alpha}{q}\right)&\text{ if
  }\frac{\beta}{\alpha}<\frac{p}{q},\\
  \min\left(\frac{p\alpha}{d},\frac{d\beta}{p}\right)&\text{
    if }\frac{p}{q}\leq \frac{\beta}{\alpha}.\end{cases}\]
  Therefore if \(\frac{d^2}{q^2}<s<\frac{p}{q}\) or \(\frac{p}{q}\leq
  s<\frac{p^2}{d^2}\) then we are in the potentially obstructive
  regime and, by Lemma \ref{lma:compute},
  \[\varepsilon(\CP^2,D;g) = \mu(\CP^2,D;g)
    = \begin{cases}\frac{d\alpha}{q}&\text{
        if }\frac{d^2}{q^2}<s<\frac{p}{q},\\
      \frac{d\beta}{p}&\text{ if }\frac{p}{q}\leq
      s<\frac{p^2}{d^2}.\end{cases}\] Now Theorem
  \ref{thm:ellipsoids_galore} gives the ellipsoids we seek. The
  packing ratios are \(\frac{d^2}{q^2s}\) if \(s<p/q\) and
  \(\frac{p^2}{d^2s}\) if \(s>p/q\), so we get full fillings
  when \(s=d^2/q^2\) or \(s=p^2/d^2\).
\end{proof}

\begin{remark}
  Note that outside the intervals mentioned in the statement of
  Proposition \ref{prp:unicuspidal}, we are in the ineffective
  regime and if we tried to construct ellipsoids with such
  slopes using the same curve \(A\), they would end up being
  suboptimal.
\end{remark}

\begin{example}
  Let \(F_1,F_2,F_3,F_4,F_5,\ldots=1,1,2,3,5,\ldots\) denote the
  Fibonacci sequence and recall that, for each \(k\geq 1\),
  there is an \((F_{2k+1},F_{2k+5})\)-unicuspidal curve of
  degree \(F_{2k+3}\); see {\cite[Theorem C(a)]{Orevkov}}. When
  applied to these curves, Proposition \ref{prp:unicuspidal}
  yields the Fibonacci stairs. Indeed, in this case the proper
  transform of \(A\) is an embedded rational \(-1\)-curve, so we
  are in the obstructive regime and the ellipsoids are optimal.
\end{example}

\begin{example}\label{exm:genus_1_unicuspidal}
  There is a \((64,9)\)-unicuspidal curve of degree \(24\) and
  genus \(1\). This can be obtained from a smooth cubic (degree
  \(3\)) by applying a suitable Orevkov twist (which is a
  birational automorphism of \(\CP^2\) of degree \(8\)) -- see
  for example {\cite[Example 8.18]{BodnarCeloriaGolla}} or
  {\cite[Proof of Corollary 5.28]{CilibertoEtAl}}. In this case,
  \(d^2/q^2=p/q=p^2/d^2=64/9\), so we obtain ellipsoids
  \(\Ell(a,b)\subseteq\CP^2\) for any \(a<8/3\) and any
  \(b<3/8\). This gives another construction of the full
  fillings with slope \(s=64/9\).
\end{example}

\begin{remark}
  If one can find a \((p^2,q^2)\)-unicuspidal curve of degree
  \(pq\) in \(\CP^2\) then one would obtain a full filling with
  slope \(p^2/q^2\); by the adjunction formula, such a curve
  would necessarily have positive genus. One might try to
  construct such a curve by applying a birational transformation
  of degree \(p\) to a smooth curve of degree \(q\) (Example
  \ref{exm:genus_1_unicuspidal} is the case \(p=8\),
  \(q=3\)). As we saw, the exceptional steps of the
  McDuff--Schlenk staircase give us full fillings of \(\CP^2\)
  at the rational slopes \(s=\frac{107^2}{40^2}\),
  \(\frac{43^2}{16^2}\), \(\frac{35^2}{13^2}\),
  \(\frac{22^2}{8^2}\) and \(\frac{17^2}{6^2}\). It would be
  nice to know if there exist corresponding unicuspidal curves.
\end{remark}

\begin{remark}
  One could also allow more general cusps, with multiple Puiseux
  pairs. The overall effect of this would be to change the value
  of \(C\cdot\til{D}\) by a multiplicative constant: the only
  difference is that unique branch of \(\dbtilde{A}\) would now
  hit the curve labelled \((p,q)\) in the Farey tree
  non-transversely, with some intersection number \(k\). The
  effect in the calculations is to replace \(p\) and \(q\) by
  \(kp\) and \(kq\). Compare with {\cite[Theorem
    A(b)]{McDuffSiegelInfiniteStairs}}.
\end{remark}

\subsection{Full fillings outside the steps?}
\label{sct:examples_outside}
As remarked in Section \ref{sct:disadvantages}, this method usually
does not seem to produce optimal ellipsoids whose slopes lie in
intervals where full fillings exist, in other words for any slope
\(s\) between \(\frac{7+\sqrt{45}}{2}\) and \(\infty\) which does not
lie on one of the nine McDuff--Schlenk steps. The exception is when
the slope is a square number \(s=d^2\). Such full fillings were
constructed by Opshtein {\cite[Lemma 2.1]{OpshteinMaximal}}: the
neighbourhood of a smooth curve of degree \(d\) is a ball subbundle of
its symplectic normal bundle, and the total space of the restriction
of this bundle to an open ball in the smooth curve is an ellipsoid of
slope \(d^2\). McDuff gave an alternative construction of the same
ellipsoid {\cite[Proposition 2.1]{McDuffEllipsoids}} by inflating
along the smooth curve of degree \(d\). To illustrate the connection
with our method, we explain his result in our language before going on
to show why things are harder for other slopes.

\begin{proposition}[Opshtein \cite{OpshteinMaximal}]
  There is a symplectic embedding of
  \(\Ell(\varepsilon,\varepsilon/d^2)\) into \(\CP^2\) for any
  integer \(d\) and any \(\varepsilon<d\).
\end{proposition}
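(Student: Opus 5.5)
We will apply Theorem \ref{thm:ellipsoids_galore} with \(Z=\CP^2\), \(D=\frac{1}{d}A\) for a smooth curve \(A\) of degree \(d\), and a weighted blow-up with weights \((\alpha,\beta)=(1,d^2)\), exactly as in Proposition \ref{prp:unicuspidal} but with a smooth guiding curve. Concretely, pick a point \(p\in A\) and perform the Farey process guided by the germ of \(A\) at \(p\) (Definition \ref{dfn:guided}): blow up \(p\), then repeatedly blow up the intersection of the proper transform of \(A\) with the newest exceptional curve. Moving right \(d^2-1\) times from the root reaches the node \((1,d^2)\). Since \(A\) is smooth at \(p\), it is guided by a coordinate axis in a local analytic chart. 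By Lemma \ref{lma:local_model} (or Example \ref{exm:guided_wb}(1)), the resulting \(g\colon Y\to\CP^2\) is a weighted blow-up with weights \((1,d^2)\). Here the chain is \(E_1,\ldots,E_m\) with \(E_{i_1}=F_m\) the \(-1\)-curve at the right end, and all \(\beta_i\) are \(1,2,\ldots,d^2\) along the chain.

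The key computation is \(C\cdot\til{D}\) via Equation \eqref{eq:C_dot_D}. The proper transform \(\dbtilde{A}\) meets only the last curve \(F_m=E_{i_1}\), once transversely, since each blow-up was at a smooth point of the transform of \(A\). Hence \(C\cdot\til{D}=\frac{1}{d}\). Then \(\delta=\alpha\beta C\cdot\til{D}=d^2\cdot\frac1d=d\) by Lemma \ref{lma:delta}, and \(D^2/(C\cdot\til{D})=1\cdot d=d\). The two quantities in Definition \ref{dfn:mu} coincide, so Lemma \ref{lma:compute} gives \(\varepsilon(\CP^2,D;g)=\mu=d\).

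As a sanity check, \(\dbtilde{A}^2=d^2-d^2=0\), so the process is general with respect to \(A\). The check is not needed for Lemma \ref{lma:compute}, but it confirms the blow-ups were performed where claimed.

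Finally, Theorem \ref{thm:ellipsoids_galore} gives, for rational \(\varepsilon<d\), an embedding of \(\Ell(\varepsilon/\alpha,\varepsilon/\beta)=\Ell(\varepsilon,\varepsilon/d^2)\) into \(\CP^2\) with the form dual to \(\pi D\), i.e.\ line area \(\pi\) in the paper's normalisation. This is to be reconciled with the area-\(1\) normalisation used in Proposition \ref{prp:first_step}, as elsewhere in the section. Irrational \(\varepsilon<d\) follow because smaller ellipsoids embed in larger ones.

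The main point needing care is the identification of the guided Farey process with a genuine weighted blow-up and the claim that \(\dbtilde{A}\) meets only \(E_{i_1}\). Both follow because a smooth germ is analytically a coordinate axis, so the process is locally the toric one along \(y=0\). There, the transform of the axis meets only the last exceptional divisor, and transversally.
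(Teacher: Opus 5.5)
Your proposal is correct and follows essentially the same route as the paper: a smooth degree-\(d\) curve \(A\), the Farey process guided by \(A\) reaching weights \((1,d^2)\), then \(C\cdot\til{D}=1/d\), \(\varepsilon(\CP^2,D;g)=d\) by Lemma \ref{lma:compute}, and finally Theorem \ref{thm:ellipsoids_galore}. Your count of \(d^2-1\) right moves and the paper's ``\(d^2\) times'' differ only in whether the root blow-up is counted. One small caveat: your aside invoking Definition \ref{dfn:general_adapted} does not literally apply for \(d\geq 3\), since that definition is only for rational curves and \(A\) then has positive genus; as you note, it plays no role in the argument.
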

\begin{proof}
  Let \(A\) be a smooth curve of degree \(d\) passing through
  \(p=[0:0:1]\). Consider the Farey process guided by \(A\)
  corresponding to the path in the Farey tree which simply moves
  right \(d^2\) times; this produces a weighted blow-up
  \(g\colon Y\to \CP^2\) with weights \((1,d^2)\) where the
  proper transform of \(A\) intersects \(E_{d^2}\) once
  transversely. If we use \(D=A/d\) then \(C\cdot\til{D}=1/d\)
  and, by Lemma \ref{lma:compute}, the weighted Seshadri
  constant is \(\varepsilon(\CP^2,D;g)=\mu(\CP^,D;g)=d\), so
  Theorem \ref{thm:ellipsoids_galore} gives embeddings of
  \(\Ell(\varepsilon,\varepsilon/d^2)\) for all
  \(\varepsilon<d\) as required.
\end{proof}

\begin{example}\label{exm:slope_10}
  Now let us focus on the slope \(s=10\). Let \(g\colon Y\to Z\)
  be a weighted blow-up with weights \((\alpha,\beta)=(1,10)\)
  and let \(f\colon X\to Y\) be the minimal resolution: the
  exceptional locus of \(g\circ f\) is a chain of curves
  \(E_1,\ldots,E_{10}\) with self-intersections
  \(E_1^2=\cdots=E_9^2=-2\) and \(E_{10}=-1\). Suppose we find a
  curve \(A\subseteq\CP^2\) whose proper transform \(\dbtilde{A}\)
  satisfies \(\dbtilde{A}\cdot E_i=\varphi_i\). Then
  \(C\cdot \til{A}=\frac{k}{10}\) where
  \(k=\sum_{i=1}^{10}i\varphi_i\), so if we take \(D=A/d\) then
  we get \(C\cdot\til{D}=k/(10d)\) and
  \[\varepsilon(\CP^2,D;g)=\min(10 C\cdot
    \til{D},1/C\cdot\til{D})= \min(k/d,10d/k).\] If
  \(k/d<\sqrt{10}\) then the weighted Seshadri constant equals
  \(k/d\) and we get an ellipsoid \(\Ell(k/d,k/(10d))\) with
  packing ratio \(k^2/(10d^2)\). If \(k/d\geq \sqrt{10}\) then
  the weighted Seshadri constant equals \(10d/k\) and we get an
  ellipsoid \(\Ell(10d/k,d/k)\) with packing ratio
  \(10d^2/k^2\). Clearly we can never achieve a packing ratio of
  \(1\) because \(\sqrt{10}\) is irrational. However, we can try
  and get close if we are able to find a curve where \(k/d\) is
  a good
  rational approximation to \(\sqrt{10}\). The continued
  fraction expansion of \(\sqrt{10}\) and the first
  few convergents are:
  \[\sqrt{10}=3+\frac{1}{6+\frac{1}{6+\cdots}}:\qquad\frac{19}{6},\qquad \frac{117}{37},\qquad
    \frac{721}{228},\qquad \cdots \] Let \(A\) be a rational
  sextic curve with an \(A_{18}\) singularity and a node: such
  curves exist, see for example {\cite[p.223, Table 2 (cont.):
    first column, penultimate row]{YangSextic}} or {\cite[\S
    1.2]{OrevkovSextic}} for an explicit rational
  parametrisation. Blow-up the \(A_{18}\) singularity: the
  proper transform of \(A\) has an \(A_{16}\) singularity on the
  exceptional curve. Continue blowing up the singular point of
  the proper transform until we have blown up nine times. The
  two branches of the germ of \(A\) separate at this point and
  intersect \(E_9\) transversely. Blow up one of these two
  intersection points. The result is precisely a chain
  \(E_1,\ldots,E_{10}\) and
  \[\varphi_i=\dbtilde{A}\cdot E_i=\begin{cases}1&\text{ if
    }i=9,10\\ 0 &\text{ otherwise.}\end{cases}\]
  This gives \(k=19\) and \(d=6\), so we find ellipsoids
  \(\Ell(a,b)\) for any \(a<10d/k=60/19\) and
  \(b<d/k=6/19\). Although we are in the potentially obstructive
  regime, the curve \(\dbtilde{A}\) is a {\em nodal} rational
  curve of square \(-1\) which has negative virtual dimension
  and hence has vanishing Gromov--Witten invariant and does not
  provide an obstruction to enlarging the ellipsoid.
\end{example}

\begin{remark}
  It seems that to find optimal ellipsoids using this technique,
  one would need to find a whole sequence of curves \(A\) of
  higher and higher degrees so that \(k/d\to\sqrt{10}\). This
  has the same flavour as finding a sequence of rays in the
  ample cone which converge to the Nagata ray.
\end{remark}

\subsection{Ellipsoids in ellipsoids}
\label{sct:examples_ellipsoids}
The final example we give is motivated by a question asked to us
by Kyler Siegel, namely which ellipsoids pack into other
ellipsoids? Let \(2\leq k<\ell\) be relatively prime positive
integers. By making a symplectic cut along its boundary, one can
compactify the ellipsoid \(\Ell(k,\ell)\) inside the weighted
projective plane \(\PP(k,\ell,1)\); let \([x:y:z]\) be weighted
homogeneous coordinates on \(\PP(k,\ell,1)\) and \(T=\{z=0\}\) be
the compactifying divisor, so that
\(\Ell(k,\ell)=\PP(k,\ell,1)\setminus T\). We will produce ellipsoid
embeddings in ellipsoids by first embedding them into weighted
projective planes and then observing that the resulting
embedding can be disjoined from \(T\) by an isotopy.

\begin{theorem}
  Let \(\beta,\alpha\) be coprime positive integers with
  \(s=\beta/\alpha>\frac{1}{2}(k\ell-2+\sqrt{k^2\ell^2-4k\ell})\). For
  any \(\epsilon<\frac{k\ell\alpha\beta}{\alpha+\beta}\) there is a
  symplectic embedding
  \(\Ell(\epsilon/\beta,\epsilon/\alpha)\to \Ell(k,\ell)\). 
\end{theorem}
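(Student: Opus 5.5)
The plan is to imitate the nodal cubic argument of Proposition \ref{prp:first_step}(a), replacing \(\CP^2\) by the weighted projective plane \(Z=\PP(k,\ell,1)\). I would use the compactifying divisor \(T=\{z=0\}\) to fix the symplectic normalisation, and then disjoin the resulting ellipsoid from \(T\).

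\emph{Normalisation.} The moment polygon of \(\Ell(k,\ell)\) is the triangle with vertices \((0,0)\), \((k/2,0)\), \((0,\ell/2)\). Its slanted edge, the image of \(T\), has direction \((-k,\ell)\) and affine length \(1/2\), so \(T\) has area \(\pi\). Since \(T^2=\frac{1}{k\ell}\), the cut symplectic form \(\zeta\) on \(Z\) is Poincar\'e dual to \(\pi D\) with \(D\sim k\ell T\), and \(D^2=k\ell\).

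\emph{Choice of curve.} Choose a point \(p\) with \(z\neq 0\), hence a smooth point of \(Z\). Take \(A\) to be an irreducible curve in \(|\mathcal{O}(k\ell)|\) with an ordinary node at \(p\). Such curves exist: \(\mathcal{O}(k\ell)\) is very ample away from the orbifold points, so imposing a singularity at \(p\) costs three conditions. A general member of the resulting subsystem should be irreducible with a node at \(p\), by Bertini and by exhibiting one explicit member, e.g.\ a perturbation of \(x^\ell-y^k\) plus terms involving \(z\). Set \(D=A\) and perform the Farey process with weights \((\alpha,\beta)\) guided by one branch of \(A\) at \(p\), as in Definition \ref{dfn:guided}. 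As in Example \ref{exm:guided_wb}(2), the guided branch then meets only the last curve \(E_m\), transversely. The other branch, being transverse, is separated at the first blow-up and meets only \(E_1\), transversely.

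\emph{Seshadri constant.} Equation \eqref{eq:C_dot_D} gives
\[
C\cdot\til D=\frac{1}{\beta}+\frac{1}{\alpha}=\frac{\alpha+\beta}{\alpha\beta},
\]
so by Lemma \ref{lma:delta}, \(\delta=\alpha+\beta\), while \(D^2/(C\cdot\til D)=\frac{k\ell\alpha\beta}{\alpha+\beta}\). The inequality \(\frac{k\ell\alpha\beta}{\alpha+\beta}\le\alpha+\beta\) is equivalent to \(s^2+(2-k\ell)s+1\ge 0\). This holds precisely for \(s\) beyond the larger root \(\frac12(k\ell-2+\sqrt{k^2\ell^2-4k\ell})\), which is exactly the hypothesis. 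Lemma \ref{lma:compute} then gives \(\varepsilon(Z,D;g)=\frac{k\ell\alpha\beta}{\alpha+\beta}\). Its proof uses only intersection theory on the normal projective surface \(Y\), so it applies even though \(Z\) has orbifold points. Theorem \ref{thm:ellipsoids_galore}, with weight vector \((\alpha,\beta)\), then embeds \(\Ell(\epsilon/\alpha,\epsilon/\beta)\), which is the stated ellipsoid up to swapping coordinates, into \((Z,\zeta)\) for every \(\epsilon<\varepsilon(Z,D;g)\). To justify this for orbifold \(Z\), I would note that the proof of Theorem \ref{thm:ellipsoids_galore} is local near the smooth point \(p\). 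The only global inputs are orbifold K\"ahler forms from orbi-ample divisors (Remark \ref{rmk:orbifold_kodaira_embedding}) and Moser's argument, both of which hold on orbifolds.

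\emph{Disjoining from \(T\) (main obstacle).} The ellipsoid produced in Proposition \ref{prp:finding_ellipsoids} lies in \(\iota(\mZ(\Sb/2))\), a small analytic neighbourhood of \(p\). I would choose this neighbourhood disjoint from \(T\). The difficulty is the final Moser step in Corollary \ref{cor:ellipsoids_2}, which might move the ellipsoid across \(T\).

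To avoid this, I would run Moser relative to \(T\). Along the path \(t\omega+(1-t)\zeta\), every form tames the same \(J\), so the \(J\)-holomorphic curve \(T\) remains symplectic throughout. First, I would isotope using the symplectic neighbourhood theorem so that all these forms agree near \(T\), which is possible because their restrictions to \(T\) are cohomologous. Second, I would choose primitives vanishing near \(T\), so that the Moser flow fixes a neighbourhood of \(T\) and the ellipsoid's image avoids \(T\).

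Finally, \(Z\setminus T\), equipped with \(\zeta\), is symplectomorphic to the interior of \(\Ell(k,\ell)\), which yields the required embedding.
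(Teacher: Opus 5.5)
Your proposal is correct and follows the paper's proof essentially step for step. You use the same degree-\(k\ell\) nodal curve on \(\PP(k,\ell,1)\), the same guided Farey process giving \(C\cdot\til{D}=\frac{1}{\alpha}+\frac{1}{\beta}\), the same appeal to Lemma \ref{lma:compute} and Theorem \ref{thm:ellipsoids_galore}, and the same relative Moser/neighbourhood-theorem argument to keep the ellipsoid off \(T\); the paper just writes down the explicit curve \(xyz^{k\ell-k-\ell}=x^\ell+y^k\) with its node at \([0:0:1]\) instead of taking a general member. One detail you should make explicit, as the paper does: \(D\sim k\ell T\) is not orbi-ample, because the \(\ZZ/k\) and \(\ZZ/\ell\) stabilisers of the orbifold points on \(T\) act trivially on the fibres of \(\mathcal{O}(k\ell)\). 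So you must work with \(ND+T\), which only rescales the class by \(1+\frac{1}{Nk\ell}\) and is harmless since \(\epsilon\) ranges over an open interval, and the neighbourhood theorem for \(T\) must be the suborbifold version, since \(T\) passes through these points.
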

\begin{proof}
  Take weighted homogeneous coordinates \([x:y:z]\) on
  \(Z\coloneqq \PP(k,\ell,1)\) and consider the curve
  \(D=\{xyz^{k\ell-k-\ell}=x^\ell+y^k\}\) of weighted degree \(k\ell\). This
  is an ample Cartier divisor with \(D^2 = k\ell\) (note that \(D\)
  is numerically equivalent to \(k\ell T\) and \(T^2 = 1/(k\ell)\)). In
  the smooth affine chart \(z=1\), \(D\) is locally analytically
  equivalent to \(xy=0\) (or \(xy=y^2\) if \(k=2\)), so it has a
  nodal singularity. Taking a Farey process with weights
  \((\alpha,\beta)\) guided by a branch of \(D\) we obtain a
  weighted blow-up \(g\colon Y\to Z=\PP(k,\ell,1)\) such that the
  proper transform \(\til{D}\) intersects the left- and
  right-most curves \(E_1\) and \(E_m\) each once transversely,
  so
  \(C\cdot\til{D} = \frac{1}{\beta} + \frac{1}{\alpha} =
  \frac{\alpha + \beta}{\alpha\beta}\). We have
  \[\mu(\PP(k,\ell,1),D;g) =
    \min\left(\alpha + \beta, \frac{k\ell\alpha\beta}{\alpha +
        \beta}\right).\] We are in the potentially obstructive
  regime when
  \(\frac{k\ell\alpha\beta}{\alpha + \beta}<\alpha+\beta\), which
  is equivalent to
  \[s\coloneqq
    \beta/\alpha>\frac{1}{2}(k\ell-2+\sqrt{k^2\ell^2-4k\ell}).\] In this
  case, the weighted Seshadri constant is
  \[\epsilon(\PP(k,\ell,1),D;g) = \mu(\PP(k,\ell,1),D;g)=
    \frac{k\ell\alpha\beta}{\alpha + \beta}=\frac{k\ell s\alpha}{1 +
      s}\] so Theorem \ref{thm:ellipsoids_galore} should give us
  the ellipsoids we seek. However, there is a minor issue here:
  in Theorem \ref{thm:ellipsoids_galore}, we assumed \(Z\) to be
  smooth, but now \(Z\) is only an orbifold, and \(D\) is only
  ample (not orbi-ample). We can fix this by instead using the
  orbi-ample divisor \(ND+T\) for sufficiently large \(N\); this
  works because \(T\) is locally ample. The only other place
  where smoothness of \(Z\) is needed is in the proof of
  Corollary \ref{cor:ellipsoids_2}, where we used Moser's
  argument to show that \((Z,\zeta)\) (the symplectic manifold
  we started with) is symplectomorphic to \((Z,\omega)\) (the
  symplectic manifold obtained by weighted blow-down). Provided
  we work with orbifold symplectic forms, nondegenerate along
  the orbifold locus, Moser's argument works just as well for
  orbifolds: one simply constructs the orbifold diffeomorphism
  by working \(\Gamma\)-invariantly in the local uniformising
  cover. Since the resulting Moser-diffeomorphisms preserve the
  orbifold locus, the ellipsoids we construct are disjoint from
  the orbifold locus.

  The ellipsoids we produce in \((Z,\omega)\) are manifestly
  disjoint from \(T\), and the arguments of {\cite[Remark 2.1.E
    and Corollary 4.1.B]{McDuffPolterovich}} apply in the
  orbifold setting, using the symplectic tubular neighbourhood
  theorem for suborbifolds {\cite[Proposition 17]{MunozRojo}},
  to show that we can modify the Moser isotopy to ensure that
  the ellipsoids in \((Z,\zeta)\) remain in the complement of
  \(T\). Therefore we produce symplectic embeddings
  \(\Ell(\alpha,\beta)\to\Ell(k,\ell)\).
\end{proof}

\begin{remark}
  As \(\epsilon\) approaches the Seshadri constant, the packing
  ratio approaches \(\frac{s+s^{-1}+2}{k\ell}\). As
  \(s\to \frac{1}{2}(k\ell-2+\sqrt{k^2\ell^2-4k\ell})\), this approaches
  \(1\), so we get fuller and fuller fillings. Note that the
  slopes of these fillings are substantially larger than
  \(\ell/k\), the slope of \(\Ell(k,\ell)\).
\end{remark}
\begin{remark}
  If we take \(k=1\) then the proof breaks down: the curve \(D\)
  is no longer nodal. Instead, if \(\ell>1\), one should look at
  \(A=\{xyz^{\ell-1}=x^\ell+y^2\}\) of weighted degree \(2\ell\) and take
  \(D=\frac{1}{2}A\). The conclusion is modified accordingly:
  for all slopes \(s>2\ell-1+4\sqrt{\ell^2-\ell}\) we get an embedding of
  \(\Ell(\epsilon/\beta,\epsilon/\alpha)\) into \(\Ell(1,\ell)\)
  whenever \(\epsilon<\frac{2\ell\alpha\beta}{\alpha+\beta}\). If
  \(k=1\) and \(\ell=1\) then we need to pass to cubics and we are
  back in the situation of Proposition \ref{prp:first_step}.
\end{remark}

\begin{remark}
  Clearly this is just one family of ellipsoid embeddings,
  analogous to the first post-Fibonacci step of the
  McDuff--Schlenk staircase explained in Proposition
  \ref{prp:first_step}. The full story of ellipsoids in
  ellipsoids remains to be worked out.
\end{remark}

\bibliographystyle{plain} \bibliography{weighted_seshadri}

\end{document}